\numberwithin{equation}{section}
\numberwithin{figure}{section}
\newtheorem{theorem}{Theorem}[section]
\newtheorem{proposition}[theorem]{Proposition}
\newtheorem{remark}[theorem]{Remark}
\newtheorem{lemma}[theorem]{Lemma}
\newtheorem{assumption}[theorem]{Assumption}
\DeclareMathOperator{\C}{\mathbb{C}}
\DeclareMathOperator{\R}{\mathbb{R}}
\DeclareMathOperator{\re}{\mathrm{e}}
\newcommand{\de}{\mathrm{d}}
\begin{document}
		
\title[Extinction of multiple shocks]{\bf On the extinction of multiple shocks \\  in  scalar viscous conservation laws}

\author{Jeanne Lin}
\address[J.~Lin]{Department of Mathematics and Statistics, McMaster University, Hamilton, Ontario, Canada, L8S 4K1}
\email{lin64@mcmaster.ca}

\author{Dmitry E. Pelinovsky}
\address[D.~E.~Pelinovsky]{Department of Mathematics and Statistics, McMaster University, Hamilton, Ontario, Canada, L8S 4K1 and 
	Department of Applied Mathematics, Nizhny Novgorod State Technical University, 24 Minin street, 603950 Nizhny Novgorod, Russia}
\email{dmpeli@math.mcmaster.ca}

\author{Bj\"{o}rn de Rijk}
\address[B.~de Rijk]{Department of Mathematics, Karlsruhe Institute for Technology, Englerstra\ss e 2, 76131 Karlsruhe, Germany}
\email{bjoern.rijk@kit.edu} 

\begin{abstract} 
We are interested in the dynamics of interfaces, or zeros, of shock waves in general scalar viscous conservation laws with a locally Lipschitz continuous flux function, such as the modular Burgers' equation. We prove that all interfaces coalesce within finite time, leaving behind either a single interface or no interface at all. Our proof relies on mass and energy estimates, regularization of the flux function, and an application of the Sturm theorems on the number of zeros of solutions of parabolic problems. Our analysis yields an explicit upper bound on the time of extinction in terms of the initial condition and the flux function. Moreover, in the case of a smooth flux function, we characterize the generic bifurcations arising at a coalescence event with and without the presence of odd symmetry. We identify associated scaling laws describing the local interface dynamics near collision. Finally, we present an extension of these results to the case of anti-shock waves converging to asymptotic limits of opposite signs. Our analysis is corroborated by numerical simulations in the modular Burgers' equation and its regularizations.
\end{abstract}

\date{\today}
\maketitle

% journals for submission: (1) SIAD, (2) Nonlinearity, (3) European J. Appl.Math.

\section{Introduction}

We consider shock and anti-shock waves with multiple interfaces in the scalar viscous conservation law
\begin{align} 
u_t = u_{xx} + f(u)_x , \qquad t \geq 0, \quad x \in \R, \quad u(t,x) \in \R, \label{modBurgers}
\end{align}
where $f \colon \R \to \R$ is a locally Lipschitz continuous flux function. A classical example is the viscous Burgers' equation with $f(u) = u^2$. Our regularity assumption on $f$ allows for nonsmooth choices such as $f(u) = |u|$, yielding the modular Burgers' equation which has been used to model inelastic dynamics of particles with piecewise interaction potentials~\cite{Hedberg1,RH18} and whose behavior has been studied analytically and numerically in~\cite{PELI,PEDR,Rad}.

\emph{Shock waves} are solutions of~\eqref{modBurgers} with initial data $u_0(x)$ converging to {nonzero} asymptotic limits $\phi_\pm$ as $x \to \pm \infty$, which satisfy $\phi_+ \neq 
\phi_-$ and obey the Gel'fand-Oleinik entropy condition
\begin{align} \frac{f(\phi_{\min}) - f(z)}{z - \phi_{\min}} > \frac{f(\phi_-) - f(\phi_+)}{\phi_+ - \phi_-}, \qquad z \in (\phi_{\min},\phi_{\max}),  \label{Gelfand} \end{align}
where we denote $\phi_{\min} = \min\{\phi_-,\phi_+\}$ and $\phi_{\max} =  \max\{\phi_-,\phi_+\}$. On the other hand, \emph{anti-shock waves} are solutions of~\eqref{modBurgers} with initial data $u_0(x)$ converging to {nonzero} asymptotic limits $\phi_\pm$, which satisfy $\phi_+ \neq \phi_-$ and do not fulfill the entropy condition~\eqref{Gelfand}. 

The Gel'fand-Oleinik entropy condition~\eqref{Gelfand} is consistent with the existence of \emph{traveling shock waves}, which are solutions of~\eqref{modBurgers} of the form $u(t,x) = \phi(x-ct)$, where $c \in \R$ denotes the propagation speed and the profile $\phi \colon \R \to \R$ solves the scalar problem
\begin{align*}
0 = \phi_\xi + c\left(\phi - \phi_-\right) + f(\phi) - f(\phi_-).
\end{align*}
Here, the profile $\phi(\xi)$ converges to the asymptotic limits $\phi_\pm$ as $\xi \to \pm \infty$ and the speed is given by the Rankine-Hugoniot condition
\begin{align} \label{rankine} c = \frac{f(\phi_-) - f(\phi_+)}{\phi_+ - \phi_-}.\end{align}
The traveling shock-wave solution $u(t,x) = \phi(x-ct)$ defined for $\phi_- \neq \phi_+$ exists if and only if the entropy condition~\eqref{Gelfand} is fulfilled. 

Traveling shock waves form an important class of asymptotic solutions of~\eqref{modBurgers} in the sense that they serve as global attractors for shock waves. More precisely, for twice continuously differentiable flux functions $f$, it has been proven in~\cite{FRSE,ILOL} that any shock-wave solution of the viscous conservation law~\eqref{modBurgers} converges as $t \to \infty$ in both $L^1$- and $L^\infty$-norm to a traveling shock wave, which necessarily possesses the same asymptotic limits $\phi_\pm$ at $\pm \infty$.

We are interested in the temporal dynamics of zeros, so-called \emph{interfaces}, for shock- and anti-shock wave solutions of the viscous conservation law~\eqref{modBurgers}. In our analysis we distinguish between three classes of initial data $u_0$, where both $\phi_+$ and $\phi_-$ are nonzero:
\begin{itemize}
    \item {\bf Class I:} $u_0(x)$ converges to asymptotic limits $\phi_\pm$ of \emph{opposite signs} as $x \to \pm \infty$, which obey the Gel'fand-Oleinik entropy condition~\eqref{Gelfand};
    \medskip
    \item {\bf Class II:} $u_0(x)$ converges to asymptotic limits $\phi_\pm$ of the \emph{same sign} as $x \to \pm \infty$;
    \medskip
    \item {\bf Class III:} $u_0(x)$ converges to asymptotic limits $\phi_\pm$ of \emph{opposite signs} as $x \to \pm \infty$, which do \emph{not} satisfy the entropy condition~\eqref{Gelfand}. 
\end{itemize}
We note that solutions of~\eqref{modBurgers} with initial data of the class I are shock waves, whereas solutions of~\eqref{modBurgers} with initial data of class III are anti-shock waves. Although solutions of~\eqref{modBurgers} with initial data of class II can be either shock or anti-shock waves, it is not necessary to distinguish between them in our analysis. 

In addition to the above assumptions, we require that our initial datum $u_0$ is uniformly continuous and bounded, and that $u_0 - \phi_\pm$ is $L^1$-integrable on $\R_\pm$. Then, by the comparison principle and standard parabolic regularity theory~\cite{LUN}, the solution of~\eqref{modBurgers} with initial condition $u_0$ stays bounded and is continuously differentiable for all positive times, while maintaining its asymptotic limits $\phi_\pm$ at $\pm \infty$. Nevertheless, if the flux function $f$ is not continuously differentiable, as in the case of the modular Burgers' equation, the second derivative of  the solution of~\eqref{modBurgers} may be discontinuous~\cite{PELI}. 

The classical Sturm Theorems yield that in parabolic semilinear equations the number of zeros of solutions is nonincreasing over time. Moreover, if at some time $t_0$ the solution has an (isolated) multiple zero $x_0$, then, in a sufficiently small neighborhood of $x_0$, the number of zeros strictly decreases when $t$ passes through $t_0$. We refer to~\cite{GAHA} for a survey on Sturm's Theorems and their applications.

In this paper we study \emph{finite-time coalescence} of interfaces. In preliminary work~\cite{PEDR}   we showed that the evolution of odd shock waves with three symmetric interfaces in the modular Burgers' equation leads to a finite-time coalescence of these interfaces to a single interface and we conjectured a scaling law for the local interface dynamics near the collision event based on data fitting. In this work we extend these results to general viscous conservation laws of the form~\eqref{modBurgers} and establish finite-time coalescence of interfaces for all solutions with initial data of class I or II and thus, of all shock-wave solutions. Moreover, we show that in the specific case of the modular Burgers' equation, solutions with initial data of class III, i.e.~anti-shock waves, can also exhibit finite-time coalescence of interfaces.

For solutions of~\eqref{modBurgers} with initial data of class I, we establish that all interfaces must coalesce to a single interface within finite time. The argument generalizes the idea from~\cite{PEDR} and relies on a differential inequalities for the masses of $u(t,x) - \phi_+$ and  $u(t,x) - \phi_-$  measured with respect to the position of the interface, in combination with  smooth approximation of the flux function and an application of the Sturm Theorem from~\cite{ANG}. Our analysis yields an explicit upper bound on the time at which all interfaces have collapsed to a single interface. We emphasize that although the results in~\cite{FRSE,ILOL} imply that solutions with initial data of class I converge in $L^1$- and $L^\infty$-norms to a traveling shock wave, which must necessarily be strictly monotone and thus, has precisely a single interface, this is not sufficient to conclude finite-time coalescence to a single interface because interfaces of the solution might accumulate close to the interface of the associated traveling shock wave. 

Initial data $u_0$ of class II can always be bounded from above or below by a smooth function $\tilde{u}_0$, which satisfies $\tilde{u}_0(x) \to \tilde{u}_\infty$ as $x \to \pm \infty$, where $\tilde{u}_\infty \neq 0$ has the same sign as $\phi_\pm$. For twice continuously differentiable flux functions $f$, the finite-time extinction of all interfaces of the solution $\tilde{u}(t,\cdot)$ of~\eqref{modBurgers} with initial condition $\tilde{u}_0$ follows by evoking the result from~\cite{FRSE} that $\tilde{u}(t,\cdot)$ converges in $L^\infty$-norm to the constant state $\tilde{u}_\infty$ as $t \to \infty$.  Consequently, the comparison principle yields the finite-time extinction of interfaces of the solution $u(t,\cdot)$ of~\eqref{modBurgers} with initial condition $u_0$. Yet, the result in~\cite{FRSE} does not provide an explicit upper bound on the extinction time and does not readily apply to the current setting of locally Lipschitz continuous flux functions. To extend the conclusion to our setting, we apply a softer argument based on energy estimates, smooth approximation of the flux function, conservation of mass and the Gagliardo-Nirenberg inequality to yield an explicit upper bound on the time at which all interfaces of $\tilde{u}(t,\cdot)$, and thus, also of $u(t,\cdot)$, have gone extinct, cf.~Remark~\ref{explicittime}.

Whether solutions of~\eqref{modBurgers} with initial data of class III do exhibit finite-time coalescence of interfaces to a single interface is currently an open problem. Since the entropy condition~\eqref{Gelfand} is not fulfilled, there exists no traveling shock to which $u(t,\cdot)$ can converge in norm as $t \to \infty$. To shed some light on this open question, we consider anti-shock waves with initial data of class III in the modular Burgers' equation with flux function $f(u) = |u|$. Our analysis indicates that, although all interfaces coalesce to a single interface in this case, the anti-shock wave converges \emph{locally uniformly} to $0$ as $t \to \infty$ suggesting that obtaining a result in general might be subtle or even false. Colloquially speaking, since the solution profile can converge to $0$ uniformly, locally near interfaces, diffusion might be too weak to enforce coalescence of interfaces. In fact, recent results~\cite{GASE} imply that the $\omega$-limit set (in the locally uniform topology induced by $L^\infty_{\mathrm{loc}}(\R)$) of bounded solutions of scalar viscous conservation laws~\eqref{modBurgers} can be complicated in the sense that it can contain a solution that is neither a traveling shock nor a constant, underlining a fundamental difference between shock waves and general bounded solutions of~\eqref{modBurgers}.

In addition to establishing finite-time coalescence of interfaces of shock and anti-shock waves, we study the interface dynamics about a coalescence event in the case of a smooth flux function $f$. If a coalescence event occurs for a solution $u(t,x)$ of~\eqref{modBurgers} at some time $t = t_0$ and point $x = \xi_0$, it must hold that $u_x(t_0,\xi_0) = 0$ and it follows from one of the classical Sturm Theorems~\cite{ANG} that there exist $\delta > 0$ and a neighborhood $U \subset \R$ of $\xi_0$ such that for $t \in (t_0-\delta,t_0)$, there are at least two interfaces in $U$ and for $t \in (t_0,t_0+\delta)$, there is at most one interface in $U$. Without the presence of additional symmetries, one generically has $u_{xx}(t_0,\xi_0) \neq 0$. We show that in this situation {\em a fold bifurcation} occurs. That is, there are precisely two interfaces $\xi_1(t) < \xi_2(t)$ in $U$ for $t \in (t_0-\delta,t_0)$ and no interfaces in $U$ for $t \in (t_0,t_0+\delta)$. Moreover, we obtain the scaling law 
\begin{equation}
\label{law-fold}
\xi_{1,2}(t) - \xi_0 \sim \pm\sqrt{2(t_0-t)} \quad \mbox{\rm as} \;\; t \to t_0^-. 
\end{equation}
In the case of an odd reflection symmetry, we generically have $u_{xx}(t_0,\xi_0) = 0$ and $u_{xxx}(t_0,\xi_0) \neq 0$. This leads to {\em a pitchfork bifurcation}, for which there are precisely three interfaces $\xi_1(t) < \xi(t) < \xi_2(t)$ in $U$ for $t \in (t_0-\delta,t_0)$ and exactly one interface $\xi(t)$ remains in $U$ for $t \in (t_0,t_0+\delta)$. We also identify the associated scaling laws 
\begin{equation}
\label{law-pitchfork1}
\xi_{1,2}(t) - \xi_0 \sim \pm\sqrt{6(t_0-t)} \quad \mbox{\rm as} \;\; t \to t_0^-
\end{equation}
 and 
\begin{equation}
\label{law-pitchfork2}
 \xi(t) - \xi_0 \sim \alpha(t_0-t) \quad \mbox{\rm as} \;\; t \to t_0
 \end{equation}
for some $\alpha \in \R$. We show that the conditions for a pitchfork bifurcation are satisfied in the classical Burgers' equation with flux function $f(u) = u^2$ for odd shock waves with a single zero on $(0,\infty)$. We note that the above results yield that the lower and upper bounds in the Sturm Theorem~\cite[Theorem~B]{ANG} on the number of interfaces before and after a coalescence event are sharp.

Finally, we corroborate our results with numerical simulations of the modular Burgers' equation. Our numerical approximations rely on a regularization of the modular nonlinearity and employ an elementary finite-difference scheme. These numerical approximations are different from those used in~\cite{PEDR}, where the modular Burgers' equation was solved on a partition of a real line complemented with additional boundary  conditions at the interfaces. We study odd shock and anti-shock waves and observe finite-time coalescence of interfaces through a pitchfork bifurcation. In addition, the numerics confirms the same scaling law 
(\ref{law-pitchfork1}) for the interface extinction. 

The derivation of scaling laws describing the interface dynamics near coalescence has been addressed in other contexts as well and appeared to be challenging. In~\cite{CG72} a linear inhomogeneous heat equation was considered as a simple model for oxygen diffusion. It was suggested 
that the oxygen front (the interface) collapses according to the scaling law $(t_0-t)^{1/2}$. However, a more recent study in~\cite{MV15} based on new numerical algorithms for the time-dependent Stefan problem showed that the scaling law $(t_0-t)^{1/2}$ is not accurate due to an additional singularity as $t \to t_0^-$. Other interface models were studied in~\cite{Needham2,Needham3} by means of matched asymptotic expansions in the context of a KPP equation with a discontinuous cut-off in the reaction function.

We conjecture that the scaling laws~(\ref{law-fold}),~(\ref{law-pitchfork1}), and~(\ref{law-pitchfork2}) proven for smooth flux functions remain true for locally Lipschitz continuous flux functions such as the modular Burgers' equation. However, this question remains open for future research. 

This paper is organized as follows. In Section~\ref{s:wellposedness} we state well-posedness and approximation results for
solutions of the viscous conservation law~(\ref{modBurgers}). Section~\ref{s:finitetime} is devoted to the analysis of finite-time coalescence of interfaces for solutions with initial data of class I, II, and III. In Section~\ref{s:interfacedynamics} we analyze the fold and pitchfork bifurcations describing the interface dynamics near coalescence events and derive associated scaling laws. Section~\ref{sec:numerics} presents numerical simulations illustrating the pitchfork bifurcation for both shock and anti-shock waves in a regularized version of the modular Burgers' equation. Appendix~\ref{app:A} contains the proofs of the well-posedness and approximation results of Section~\ref{s:wellposedness}.

\medskip

{\bf Acknowledgements.} J.~Lin was supported by Stewart Research Scholarship of McMaster University. 
D.~E.~Pelinovsky acknowledges the funding of this study provided by the grant No.~FSWE-2023-0004 
%through the State task program in the sphere of scientific activity of the Ministry of Science and Higher Education of the Russian Federation 
and grant No.~NSH-70.2022.1.5.
%for the State support of leading Scientific Schools of the Russian Federation.

\section{Global well-posedness and approximation} 
\label{s:wellposedness}

In this section we establish global well-posedness of uniformly continuous and bounded solutions of the viscous conservation law~\eqref{modBurgers}. We first consider smooth flux functions $f$ before studying the general case of a locally Lipschitz continuous flux function. We show that by locally approximating the flux function $f$ by a smooth function $\tilde{f}$, one can approximate solutions $u(t,\cdot)$ of~\eqref{modBurgers} on any finite time interval by a solution $\tilde u(t,\cdot)$ of the regularized problem  
\begin{align} \label{modBurgersApprox}
\tilde u_t = \tilde u_{xx} + \tilde{f}(\tilde u)_x.
\end{align}
Proofs of all results formulated in this section can be found in Appendix~\ref{app:A}.

For smooth flux functions $f \in C^\infty(\R)$ local existence and uniqueness of classical solutions of~\eqref{modBurgers} follow readily by standard regularity theory for parabolic semilinear equations~\cite{LUN}. The fact that~\eqref{modBurgers} obeys a comparison principle~\cite{PRWE,SERR} then yields global well-posedness. All in all, we establish the following result.

\begin{lemma} \label{p:globalsmooth}
Let $f \in C^\infty(\R)$ and $u_0 \in C_{\mathrm{ub}}^1(\R)$.  Let $M_0 = \sup\{u_0(x) : x \in \R\}$ and $m_0 = \inf\{u_0(x) : x \in \R\}$. There exists a unique smooth global classical solution 
\begin{align*}
u \in C\big([0,\infty),C_{\mathrm{ub}}^1(\R)\big) \cap C\big((0,\infty),C_{\mathrm{ub}}^2(\R)\big) \cap C^1\big((0,\infty),C_{\mathrm{ub}}(\R)\big),
\end{align*}
of~\eqref{modBurgers}  with initial condition $u(0,\cdot) = u_0$ such that $m_0 \leq u(t,x) \leq M_0$ for all $t \geq 0$ and $x \in \R$. Moreover, we have $u \in C^\infty\big((0,\infty) \times \R,\R\!\big)$ with $\partial_t^k u(t,\cdot) \in C_{\mathrm{ub}}^l(\R)$ for $t \geq 0$ and $k,l \in \mathbb{N}_0$.
\end{lemma}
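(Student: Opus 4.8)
The plan is to treat~\eqref{modBurgers} as a semilinear parabolic equation governed by the heat semigroup $S(t) = e^{t\partial_x^2}$ on the Banach space $C_{\mathrm{ub}}(\R)$, absorbing the spatial derivative in the nonlinearity $f(u)_x$ into the smoothing action of $S(t)$. Since the solution is expected to remain in the bounded interval $[m_0,M_0]$, I would first replace $f$ by a function $\tilde f \in C^\infty(\R)$ agreeing with $f$ on a neighborhood of $[m_0,M_0]$ but having globally bounded derivatives; the a priori bound established below guarantees that the solution of the truncated problem solves the original one, so this modification is harmless. The starting point is the mild (Duhamel) formulation
\begin{equation*}
u(t) = S(t) u_0 + \int_0^t \partial_x S(t-s)\, \tilde f(u(s))\, \de s ,
\end{equation*}
in which the derivative has been moved from the nonlinearity onto the semigroup. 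The key analytic input is the heat-kernel smoothing estimate $\|\partial_x S(t) g\|_\infty \le C t^{-1/2}\|g\|_\infty$, whose singularity $t^{-1/2}$ is integrable near $0$. With this, local existence and uniqueness follow from a contraction-mapping argument for the map defined by the right-hand side on $C([0,T],C^1_{\mathrm{ub}}(\R))$ with $T$ small. Controlling the $C_{\mathrm{ub}}$-norm uses the estimate above together with the boundedness of $\tilde f$; controlling the derivative uses $\partial_x \int_0^t \partial_x S(t-s)\tilde f(u)\,\de s = \int_0^t \partial_x S(t-s)\,[\tilde f'(u)u_x]\,\de s$, which keeps only a single derivative on the semigroup, while $\tilde f'(u)u_x \in C_{\mathrm{ub}}(\R)$ whenever $u \in C^1_{\mathrm{ub}}(\R)$. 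The global Lipschitz bounds on $\tilde f$ and its derivatives make the map a contraction for small $T$, yielding a unique local mild solution together with the standard blow-up alternative: either the solution is global or its $C^1_{\mathrm{ub}}$-norm diverges in finite time.

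For the a priori bounds and global existence, I note that the constants $m_0$ and $M_0$ are stationary solutions of~\eqref{modBurgers}, so the comparison principle of~\cite{PRWE,SERR} applied to $m_0 \le u_0 \le M_0$ yields $m_0 \le u(t,x) \le M_0$ throughout the interval of existence. To rule out finite-time blow-up of the $C^1$-norm, I would differentiate the Duhamel formula and estimate
\begin{equation*}
\|u_x(t)\|_\infty \le \|u_0'\|_\infty + C L \int_0^t (t-s)^{-1/2}\,\|u_x(s)\|_\infty\, \de s ,
\end{equation*}
where $L$ bounds $\tilde f'$ on $[m_0,M_0]$; a weakly singular Gronwall inequality then bounds $\|u_x(t)\|_\infty$ on every finite time interval. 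Combined with the uniform bound on $\|u(t)\|_\infty$, this contradicts the blow-up alternative and forces $T_{\max} = \infty$. I expect this passage from the a priori $L^\infty$-bound to a global $C^1$-bound to be the main obstacle: because the nonlinearity loses one derivative, the estimate cannot be closed by the maximum principle alone, and it is precisely the interplay of the $t^{-1/2}$ smoothing of $\partial_x S(t)$ with the singular Gronwall inequality that promotes the local solution to a global one.

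Finally, for the stated higher regularity I would exploit the analytic smoothing of $S(t)$, namely that $S(t)\colon C_{\mathrm{ub}}(\R) \to C^k_{\mathrm{ub}}(\R)$ with norm of order $t^{-k/2}$, and bootstrap in the Duhamel formula: for $t>0$ this gives $u(t,\cdot)\in C^2_{\mathrm{ub}}(\R)$ with $t\mapsto u(t,\cdot)$ continuous into $C^2_{\mathrm{ub}}(\R)$, and then the equation yields $u \in C^1\big((0,\infty),C_{\mathrm{ub}}(\R)\big)$, invoking the parabolic regularity theory of~\cite{LUN}. Differentiating the equation repeatedly in $x$ and $t$ and reiterating the bootstrap upgrades this to $u \in C^\infty\big((0,\infty)\times\R\big)$ with $\partial_t^k u(t,\cdot)\in C^l_{\mathrm{ub}}(\R)$ for all $k,l \in \mathbb{N}_0$. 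These remaining steps—contraction, comparison, and parabolic bootstrap—are standard once the global $C^1$-bound is in hand.
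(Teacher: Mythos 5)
Your proposal is correct and follows the same overall architecture as the paper's proof: the Duhamel formulation with the spatial derivative moved onto the heat semigroup, local existence via the $t^{-1/2}$ smoothing estimate and the blow-up alternative, the comparison principle of the cited references against the constant solutions $m_0$ and $M_0$, and a parabolic bootstrap for the higher regularity. The one place where you genuinely diverge is the step you yourself identify as the main obstacle, namely the global bound on $\|u_x(t,\cdot)\|_\infty$. You close it by applying the weakly singular Gr\"onwall inequality to
\begin{equation*}
\|u_x(t,\cdot)\|_\infty \le \|u_0'\|_\infty + CL\int_0^t (t-s)^{-1/2}\,\|u_x(s,\cdot)\|_\infty\, \de s ,
\end{equation*}
which yields a bound growing exponentially in $t$; this is entirely sufficient to defeat the blow-up alternative, and the inequality you need is precisely \cite[Lemma~7.0.3]{LUN}, which the paper itself invokes in the proofs of Lemmas~\ref{p:globalcub}, \ref{p:approxcub1}, and~\ref{p:globalcub1}. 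The paper instead splits the Duhamel integral at $s = t(1-\delta)$: on $[0,t(1-\delta)]$ it keeps $f(u)$ undifferentiated and places both derivatives on the semigroup, paying only a factor $R|\log\delta|$, while on $[t(1-\delta),t]$ it uses $f'(u)u_x$ with a single derivative, producing a coefficient proportional to $R\sqrt{\delta t}$ in front of $\sup_s\|u_x(s,\cdot)\|_\infty$ that is absorbed by choosing $\delta$ of order $(CR)^{-2}\tau^{-1}$. That variant buys a gradient bound growing only logarithmically in $\tau$ rather than exponentially, though for the purposes of this lemma either conclusion suffices. Your preliminary truncation of $f$ to a globally Lipschitz $\tilde f$ is harmless but not needed: since the a priori bound confines $u$ to $[m_0,M_0]$, the local Lipschitz theory of \cite{LUN} applies directly, which is the route the paper takes.
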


Next, we establish global well-posedness of solutions of~\eqref{modBurgers} for locally Lipschitz continuous flux functions $f$. In this case, classical solutions in the sense of Lemma~\ref{p:globalsmooth} cannot always be expected. For instance, the modular Burgers' equation with flux function $f(u) = |u|$ admits for any $\phi_\pm \in \R$ with $\phi_- < 0 < \phi_+$ a traveling shock-wave solution $u(t,x) = \phi(x-ct)$ converging to asymptotic limits $\phi_\pm$ and propagating with speed 
\begin{align*} 
c = \frac{\phi_+ + \phi_-}{\phi_- - \phi_+}, 
\end{align*}
whose profile 
\begin{align*}
\phi(\pm \xi) = \phi_\pm \left(1-\re^{- (1+c)\xi}\right), \qquad \xi \geq 0,
\end{align*}
does lie in $C_{\mathrm{ub}}^1(\R)$, but not in $C_{\mathrm{ub}}^2(\R)$.
Therefore, we consider \emph{mild} solutions of~\eqref{modBurgers}, which solve the associated integral equation
\begin{align} \label{mildform2}
u(t,\cdot) = \re^{\partial_x^2 t} u_0 + \int_0^t \partial_x \re^{\partial_x^2(t-s)} f(u(s,\cdot)) \de s,
\end{align}
where $u(0,\cdot) = u_0 \in C_{\mathrm{ub}}^1(\R)$ denotes the initial condition. 

Standard analytic semigroup theory in combination with the fact that $f$ is locally Lipschitz continuous yields local existence and uniqueness of solutions of~\eqref{mildform2} in $C_{\mathrm{ub}}(\R)$. We note that it is important here to compose the derivative in~\eqref{mildform2} with the semigroup $\re^{\partial_x^2(t-s)}$, rather than applying it to the flux function $f$, since $f'$ is not necessarily locally Lipschitz continuous. Global well-posedness follows by approximating the solution $u(t,\cdot)$ of~\eqref{mildform2} by the global classical solution $\tilde{u}(t,\cdot)$ of the regularized problem~\eqref{modBurgersApprox}, where $\tilde{f} \in C^\infty(\R)$ is a smooth local approximation of $f$. This leads to the following result.

\begin{lemma} \label{p:globalcub}
Let $f \colon \R \to \R$ be locally Lipschitz continuous and $u_0 \in C_{\mathrm{ub}}^1(\R)$. Let $M_0 = \sup\{u_0(x) : x \in \R\}$ and $m_0 = \inf\{u_0(x) : x \in \R\}$. There exists a unique global solution $u \in C([0,\infty),C_{\mathrm{ub}}(\R))$ of~\eqref{mildform2} such that $m_0 \leq u(t,x) \leq M_0$ for all $t \geq 0$ and $x\in \R$. Moreover, there exist constants $\delta_0,C_0 > 0$ such that for each $\delta \in (0,\delta_0)$ and $\tilde{f} \in C^\infty(\R)$ satisfying
\begin{align*}
 \sup\left\{\big|f(v)-\tilde{f}(v)\big| : v \in \left[m_0,M_0\right]\right\} < \delta,
\end{align*}
the global classical solution
\begin{align} \label{regularity2}
\tilde u \in C\big([0,\infty),C_{\mathrm{ub}}^1(\R)\big) \cap C\big((0,\infty),C_{\mathrm{ub}}^2(\R)\big) \cap C^1\big((0,\infty),C_{\mathrm{ub}}(\R)\big),
\end{align}
of the regularized equation~\eqref{modBurgersApprox} with $\tilde u(0,\cdot) = u_0$, established in Lemma~\ref{p:globalsmooth}, obeys the estimates
\begin{align} \label{approxub}
m_0 \leq \tilde u(t,x) \leq M_0, \qquad \left\|u(t,\cdot) - \tilde u(t,\cdot)\right\|_\infty \leq C_0 \delta \sqrt{t},
\end{align}
for all $t \geq 0$ and $x \in \R$.
\end{lemma}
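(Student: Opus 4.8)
The plan is to treat the mild formulation~\eqref{mildform2} by analytic semigroup theory and to transfer the quantitative bounds of Lemma~\ref{p:globalsmooth} to the Lipschitz setting through the smooth approximation $\tilde f$. The starting point is the smoothing estimate for the heat semigroup, $\|\partial_x \re^{\partial_x^2 \tau} g\|_\infty \leq C \tau^{-1/2} \|g\|_\infty$ for $g \in C_{\mathrm{ub}}(\R)$ and $\tau > 0$, whose singularity $\tau^{-1/2}$ is integrable at $\tau = 0$. First I would establish local existence and uniqueness of a solution $u \in C([0,T],C_{\mathrm{ub}}(\R))$ of~\eqref{mildform2} by a contraction-mapping argument applied to the right-hand side of~\eqref{mildform2}, working in a closed ball of $C([0,T],C_{\mathrm{ub}}(\R))$ and using that $f$ is Lipschitz on the range of the iterates. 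As emphasized in the text, keeping $\partial_x$ composed with the semigroup is essential, since only $f$, and not $f'$, is assumed locally Lipschitz. A standard continuation argument then yields a maximal existence time $T_{\max}$ with the blow-up alternative that $\|u(t,\cdot)\|_\infty \to \infty$ as $t \uparrow T_{\max}$ if $T_{\max} < \infty$.

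The core of the proof is the comparison of $u$ with the global classical solution $\tilde u$ of the regularized equation~\eqref{modBurgersApprox}, which by Lemma~\ref{p:globalsmooth} also satisfies the mild identity~\eqref{mildform2} with $f$ replaced by $\tilde f$ and obeys $m_0 \le \tilde u \le M_0$. Subtracting the two mild representations and splitting the flux difference as $f(u) - \tilde f(\tilde u) = \big(f(u)-f(\tilde u)\big) + \big(f(\tilde u)-\tilde f(\tilde u)\big)$, I would bound the first term by $L\,|u-\tilde u|$, where $L$ is a Lipschitz constant of $f$ on the relevant range, and the second term by $\delta$, using $\tilde u \in [m_0,M_0]$ and the hypothesis on $\sup_{[m_0,M_0]}|f-\tilde f|$. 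Writing $g(t) = \|u(t,\cdot)-\tilde u(t,\cdot)\|_\infty$ and applying the smoothing estimate under the time integral gives the singular Volterra inequality
\[
g(t) \le C \int_0^t (t-s)^{-1/2}\big(L\,g(s) + \delta\big)\,\de s .
\]
A singular Gronwall (Henry-type) lemma then produces, on any bounded interval $[0,T]$, a bound of the form $g(t) \le C(T)\,\delta\,\sqrt{t}$; the clean $\sqrt t$ factor comes precisely from $\int_0^t (t-s)^{-1/2}\,\de s = 2\sqrt t$ acting on the constant source $\delta$.

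With this estimate in hand I would close a bootstrap to obtain the global bounds. On any interval $[0,T] \subset [0,T_{\max})$ the continuous solution $u$ is bounded, which fixes an admissible $L$ and hence the inequality above; letting $\delta \to 0$ along a sequence of smooth approximations with $\sup_{[m_0,M_0]}|f-\tilde f| \to 0$ forces $g \to 0$ uniformly on $[0,T]$, so that $u = \lim \tilde u$ inherits the pointwise bounds $m_0 \le u \le M_0$ from $\tilde u$. In particular $u$ is bounded uniformly in time, the blow-up alternative is excluded, and $T_{\max} = \infty$; uniqueness follows from the same Volterra inequality applied with $\tilde f = f$. Finally, to reach the stated estimate~\eqref{approxub}, I would use the now-established global bound $m_0 \le u,\tilde u \le M_0$ to fix $L$ as the Lipschitz constant of $f$ on $[m_0,M_0]$, apply the singular Gronwall lemma to get $g(t) \le C_1\,\delta\,\sqrt t$ on a fixed interval $[0,T_0]$, and for $t > T_0$ combine the a priori bound $g(t) \le M_0 - m_0$ with the short-time rate to absorb everything into a single constant $C_0$.

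The main obstacle I anticipate is the interplay between boundedness and the Lipschitz constant: the comparison inequality only holds while $u$ remains in a range on which $f$ is Lipschitz, yet the boundedness of $u$ is itself a consequence of that inequality. Resolving this requires running the $\delta \to 0$ limit on each fixed finite interval before appealing to the blow-up alternative, rather than attempting to bound $u$ directly. A secondary delicate point is extracting a time-uniform approximation constant from a singular Gronwall estimate whose natural output grows with $T$; the resolution is that the refined $\sqrt t$ rate is only needed for short times, while the uniform a priori bound $M_0 - m_0$ controls the difference at large times.
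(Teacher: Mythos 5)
Your proposal follows essentially the same route as the paper: local existence by contraction for the mild formulation with the blow-up alternative, subtraction of the two mild representations, the splitting $f(u)-\tilde f(\tilde u)=(f(u)-f(\tilde u))+(f(\tilde u)-\tilde f(\tilde u))$, a singular Gronwall inequality yielding $\|u(t,\cdot)-\tilde u(t,\cdot)\|_\infty\lesssim\delta\sqrt{t}$, and the limit $\delta\to 0$ to transfer the bounds $m_0\le\tilde u\le M_0$ to $u$ and rule out blow-up. The only structural difference is how the circularity between boundedness and the choice of Lipschitz constant is broken: the paper takes $L$ on the enlarged range $[m_0-1,M_0+1]$ and runs a first-failure-time argument on the set where $\|u(t,\cdot)-\tilde u(t,\cdot)\|_\infty\le 1$, whereas you use the a priori boundedness of the continuous solution on compact subintervals of the maximal existence interval; both are valid and yield the same conclusion.

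One concrete caveat concerns your final step for large times. To obtain the stated estimate with a single constant $C_0$ independent of $t$ and $\delta$, you propose to use the Gronwall bound on $[0,T_0]$ and, for $t>T_0$, to absorb the a priori bound $g(t)\le M_0-m_0$ into $C_0\delta\sqrt{t}$. But dominating $M_0-m_0$ by $C_0\delta\sqrt{t}$ for $t>T_0$ forces $C_0\ge (M_0-m_0)/(\delta\sqrt{T_0})$, so the constant produced this way degenerates as $\delta\to 0$ and does not deliver the claimed uniformity. The paper instead takes $C_0$ directly from the Gronwall lemma (asserting dependence only on the semigroup constant and the Lipschitz constant of $f$ on the fixed range); if you work with the standard Henry-type inequality whose constant grows with the length of the time interval, you should either justify the time-uniformity of that constant or restrict the $\sqrt{t}$-estimate to bounded time intervals, which is all that is used in the later applications of the lemma.
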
 

Next, we approximate mild solutions of~\eqref{modBurgers} by solutions of the regularized equation~\eqref{modBurgersApprox} in $C_{\mathrm{ub}}^1$-norm rather than in $C_{\mathrm{ub}}$-norm. The approximation in $C_{\mathrm{ub}}^1$-norm will be used in the upcoming analysis to conclude that a single interface of the approximate solution also yields a single interface of the original solution. 

\begin{lemma} \label{p:approxcub1}
Let $f \colon \R \to \R$ be locally Lipschitz continuous. Let $u \in C\big([0,\infty),C_{\mathrm{ub}}^1(\R)\big)$ be a global solution of~\eqref{mildform2} with initial condition $u(0) = u_0 \in C_{\mathrm{ub}}^1(\R)$. Set $M_0 = \sup\{u_0(x) : x \in \R\}$ and $m_0 = \inf\{u_0(x) : x \in \R\}$. Let $R,\tau,\varepsilon > 0$.
There exists $\delta_0 > 0$ such that for each $\delta \in (0,\delta_0)$ and $\tilde f \in C^\infty(\R)$ satisfying
\begin{align*}
 \sup\left\{\big|f(v)-\tilde{f}(v)\big| : v \in \left[m_0,M_0\right]\right\} < \delta, \qquad 
 \sup\left\{\big|\tilde{f}'(v)\big| : v \in \left[m_0,M_0\right]\right\} \leq R,
\end{align*}
the global classical solution~\eqref{regularity2} of the regularized equation~\eqref{modBurgersApprox} with initial condition $\tilde u(0,\cdot) = u_0$, established in Lemma~\ref{p:globalsmooth}, obeys the estimates
\begin{align} \label{approxub2}
m_0 \leq \tilde u(t,x) \leq M_0 \qquad \sup_{0 \leq s \leq \tau}\left\|u(s,\cdot) - \tilde u(s,\cdot)\right\|_{W^{1,\infty}} < \varepsilon,
\end{align}
for $x\in\R$ and $t\geq 0$.
\end{lemma}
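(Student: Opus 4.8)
The plan is to upgrade the $C_{\mathrm{ub}}$-estimate of Lemma~\ref{p:globalcub} to a $C_{\mathrm{ub}}^1$-estimate by differentiating the Duhamel representation of the difference and exploiting the smoothing of the heat semigroup across a thin temporal boundary layer. Throughout I abbreviate $w(t,\cdot) = u(t,\cdot) - \tilde u(t,\cdot)$ and $g(s,\cdot) = f(u(s,\cdot)) - \tilde f(\tilde u(s,\cdot))$. The bounds $m_0 \le \tilde u(t,x) \le M_0$ are immediate from Lemma~\ref{p:globalcub}, and by uniqueness of mild solutions the given $C_{\mathrm{ub}}^1$-solution $u$ coincides with the mild solution there, so $m_0 \le u(t,x)\le M_0$ and $\|w(t,\cdot)\|_\infty \le C_0\delta\sqrt t$ hold on $[0,\tau]$ after shrinking $\delta_0$; this already controls the $C_{\mathrm{ub}}$-part of the norm. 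Subtracting the two mild equations~\eqref{mildform2} yields
\begin{align*}
w(t,\cdot) = \int_0^t \partial_x \re^{\partial_x^2(t-s)} g(s,\cdot)\,\de s,
\end{align*}
and since $u(s,\cdot),\tilde u(s,\cdot)$ take values in $[m_0,M_0]$, the bound $\|\tilde f'\|_\infty\le R$ on $[m_0,M_0]$ and the $C_{\mathrm{ub}}$-estimate give $\|g(s,\cdot)\|_\infty \le \delta\big(1 + R C_0\sqrt\tau\big)$ uniformly for $s\in[0,\tau]$. Thus $g$ is small in $L^\infty$ of order $\delta$, which drives the bulk estimate below.

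The key point is that we are given no smallness of $f'-\tilde f'$, so $\partial_x g$ will only be bounded, not small. First I would record this uniform bound. Because $u\in C([0,\infty),C_{\mathrm{ub}}^1(\R))$, the quantity $\sup_{s\in[0,\tau]}\|u_x(s,\cdot)\|_\infty$ is finite and independent of $\tilde f$; differentiating the Duhamel formula for $\tilde u$ and applying the singular Gronwall (Gronwall--Henry) inequality to $a(t)=\|\tilde u_x(t,\cdot)\|_\infty \le \|u_0'\|_\infty + CR\int_0^t (t-s)^{-1/2}a(s)\,\de s$ bounds $\sup_{s\in[0,\tau]}\|\tilde u_x(s,\cdot)\|_\infty$ by a constant depending only on $R,\tau,\|u_0'\|_\infty$, crucially uniform over all admissible $\tilde f$. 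Since $f(u(s,\cdot))$ is globally Lipschitz in $x$ with constant $\le L\|u_x(s,\cdot)\|_\infty$, where $L$ is the Lipschitz constant of $f$ on $[m_0,M_0]$, and $\partial_x\big[\tilde f(\tilde u(s,\cdot))\big]=\tilde f'(\tilde u)\tilde u_x$, the map $g(s,\cdot)$ is globally Lipschitz in $x$, hence lies in $W^{1,\infty}(\R)$ with $\|\partial_x g(s,\cdot)\|_\infty \le K$ for all $s\in[0,\tau]$, where $K=K(R,\tau,f,u_0)$ is independent of $\delta$.

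Next I would differentiate the Duhamel formula and split the time integral at $s=t-\eta$ for a parameter $\eta\in(0,\tau)$:
\begin{align*}
\partial_x w(t,\cdot) = \int_0^{t-\eta}\partial_x^2\re^{\partial_x^2(t-s)}g(s,\cdot)\,\de s + \int_{t-\eta}^t \partial_x \re^{\partial_x^2(t-s)}\,\partial_x g(s,\cdot)\,\de s,
\end{align*}
where in the boundary-layer term I moved one $x$-derivative onto $g$, which is legitimate as $g(s,\cdot)\in W^{1,\infty}(\R)$ and the heat semigroup commutes with differentiation. Using the smoothing estimates $\|\partial_x^2\re^{\partial_x^2\sigma}\|_{L^\infty\to L^\infty}\le C\sigma^{-1}$ and $\|\partial_x\re^{\partial_x^2\sigma}\|_{L^\infty\to L^\infty}\le C\sigma^{-1/2}$, together with the $L^\infty$-smallness of $g$ and the uniform bound $\|\partial_x g\|_\infty\le K$, I obtain
\begin{align*}
\|\partial_x w(t,\cdot)\|_\infty \le C\delta\big(1+RC_0\sqrt\tau\big)\ln(\tau/\eta) + 2CK\sqrt\eta, \qquad t\in[\eta,\tau],
\end{align*}
while for $t\in[0,\eta)$ the single boundary-layer estimate gives directly $\|\partial_x w(t,\cdot)\|_\infty \le 2CK\sqrt t \le 2CK\sqrt\eta$. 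The two-scale conclusion is then: given $\varepsilon>0$, first choose $\eta$ small so that $2CK\sqrt\eta<\varepsilon/2$, and then, with $\eta$ fixed, shrink $\delta_0$ so that the logarithmically-weighted bulk term is $<\varepsilon/2$ for all $\delta\in(0,\delta_0)$. Combined with $\|w(t,\cdot)\|_\infty\le C_0\delta\sqrt\tau<\varepsilon$, this yields $\sup_{0\le s\le\tau}\|w(s,\cdot)\|_{W^{1,\infty}}<\varepsilon$.

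The main obstacle is exactly the absence of smallness for $f'-\tilde f'$: a naive differentiation of Duhamel produces the non-integrable singularity $(t-s)^{-1}$, and $\partial_x g$ is only bounded rather than $O(\delta)$. The resolution is the two-scale split, which spends the integrable $\sqrt\eta$-smallness of the thin layer $[t-\eta,t]$ on the merely-bounded derivative term while reserving the $O(\delta)$-smallness of $\|g\|_\infty$ for the logarithmically-weighted bulk; fixing the order of the choices ($\eta$ first, then $\delta$) is the delicate point. A subsidiary technical obstacle is obtaining the bound on $\|\tilde u_x\|_\infty$ uniformly over all admissible regularizations $\tilde f$, which is precisely what the hypothesis $\|\tilde f'\|_\infty\le R$ delivers through the singular Gronwall inequality.
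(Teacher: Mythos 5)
Your proof is correct, and it shares the paper's skeleton: subtract the two Duhamel formulas, differentiate in $x$, and split the time integral into a bulk part, where both derivatives stay on the heat semigroup so that the $(t-s)^{-1}$ singularity integrates to a logarithm weighted by the $O(\delta)$-small quantity $\|f(u(s,\cdot))-\tilde f(\tilde u(s,\cdot))\|_\infty$, and a thin terminal layer, where one derivative is moved onto the flux so that only $(t-s)^{-1/2}$ appears. The two arguments diverge in how the layer is closed. The paper fixes the layer width to be $\delta t$, decomposes the differentiated flux difference as $\big(f'(u)-\tilde f'(\tilde u)\big)u_x+\tilde f'(\tilde u)\big(u_x-\tilde u_x\big)$, and feeds the second piece into the singular Gr\"onwall inequality for $\|u_x(t,\cdot)-\tilde u_x(t,\cdot)\|_\infty$, which yields the quantitative rate $M\sqrt{\delta t}$ with $M$ independent of $\delta$. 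You instead keep the layer width $\eta$ as a free parameter, bound the whole derivative $\partial_x g$ by a $\delta$-independent constant $K$ --- which forces the extra step of a uniform-in-$\tilde f$ a priori bound on $\|\tilde u_x\|_\infty$, correctly extracted from the hypothesis $\sup_{[m_0,M_0]}|\tilde f'|\le R$ via singular Gr\"onwall applied to $\tilde u_x$ alone --- and conclude by the two-scale choice ($\eta$ first, then $\delta$). Your route avoids running Gr\"onwall on the difference of derivatives and is in that sense more elementary, but it only delivers the qualitative $\varepsilon$-statement; recovering the paper's explicit $O(\sqrt{\delta t})$ bound would require optimizing $\eta=\eta(\delta)$ (e.g.\ $\eta\sim\delta$ up to a logarithm). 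Both closures are adequate for the lemma as stated, and your bookkeeping (the splitting $\|g\|_\infty\le\delta+R\,C_0\delta\sqrt{\tau}$ via $[f(u)-\tilde f(u)]+[\tilde f(u)-\tilde f(\tilde u)]$, the $W^{1,\infty}$ regularity of $g$ needed to commute one derivative through the semigroup, and the separate treatment of $t<\eta$) is sound.
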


We emphasize that Lemma~\ref{p:approxcub1}, in contrast to Lemma~\ref{p:globalcub}, is merely an approximation result and does not imply the existence of a global mild solution in $C_{\mathrm{ub}}^1(\R)$. This suffices for our purposes because we only apply Lemma~\ref{p:approxcub1} to establish finite-time coalescence of interfaces for solutions of~\eqref{modBurgers} with initial data of class I, for which global existence of a mild solution in $C_{\mathrm{ub}}^1(\R)$ follows from a separate well-posedness result, which we will formulate next.

In case of initial data of class I the entropy condition~\eqref{Gelfand} yields the existence of a traveling shock wave with the same limits at $\pm \infty$. We require that the difference between the initial condition and the traveling shock wave is $L^1$-integrable and show that this integrability is maintained over time, which will be important for the mass and energy estimates in the upcoming proofs establishing finite-time coalescence of interfaces in~\S\ref{s:finitetime}. Moreover, by integrating the viscous conservation law~\eqref{modBurgers} we obtain global well-posedness of mild solutions in $C_{\mathrm{ub}}^1(\R)$ rather than in $C_{\mathrm{ub}}(\R)$. 

\begin{lemma} \label{p:globalcub1}
Let $f \colon \R \to \R$ be locally Lipschitz continuous and let $u_0 \in C_{\mathrm{ub}}^1(\R)$. Suppose that there exist $c, C \in \R$ and a solution $\phi \in C_{\mathrm{ub}}^1(\R)$ of the profile equation
\begin{align*}
0 = \phi_\xi + c\phi + f(\phi) + C.
\end{align*}
Suppose $u_0 - \phi$ is $L^1$-integrable. Then, there exists a unique solution $u \in C\big([0,\infty),C_{\mathrm{ub}}^1(\R)\big)$ of~\eqref{mildform2} such that $u(t,\cdot) - \phi$ is $L^1$-integrable for all $t \geq 0$.
\end{lemma}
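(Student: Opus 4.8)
\section*{Proof proposal}

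The plan is to build on the global mild solution $u \in C\big([0,\infty),C_{\mathrm{ub}}(\R)\big)$ furnished by Lemma~\ref{p:globalcub}, which already satisfies $m_0 \le u \le M_0$, and to supply the two missing pieces: continuity into $C_{\mathrm{ub}}^1(\R)$ up to $t=0$, and propagation of the $L^1$-integrability of $u(t,\cdot)-\phi$. Both rest on \emph{integrating} the equation against the profile. Since $u_0-\phi$ is $L^1$-integrable and $u_0,\phi \in C_{\mathrm{ub}}^1(\R)$, the antiderivative $v_0(x) := \int_{-\infty}^x (u_0(y)-\phi(y))\,\de y$ is bounded and lies in $C_{\mathrm{ub}}^2(\R)$, with $v_0' = u_0-\phi$. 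I would then derive the equation satisfied by the antiderivative $v(t,x) := \int_{-\infty}^x(u(t,y)-\phi(y))\,\de y$ of a solution. Integrating $u_t=u_{xx}+f(u)_x$ once in $x$ and eliminating $\phi_\xi$ through the profile equation $0=\phi_\xi+c\phi+f(\phi)+C$ yields a semilinear parabolic equation of the form $v_t = v_{xx} + G(x,v_x)$, where $G(x,p) = \phi'(x) + f(\phi(x)+p) - \ell$, with $\ell$ the limiting value of $u_y+f(u)$ as $y\to -\infty$. The crucial structural feature is that $G$ is continuous, bounded, and Lipschitz in $p$ on the range dictated by $m_0 \le u \le M_0$, \emph{without} any derivative of $f$ appearing.

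For the $C_{\mathrm{ub}}^1$-well-posedness I would solve the integrated equation through its Duhamel formulation $v(t) = \re^{\partial_x^2 t}v_0 + \int_0^t \re^{\partial_x^2(t-s)}G(\cdot,v_x(s))\,\de s$ by a contraction in $C\big([0,\tau],C_{\mathrm{ub}}^1(\R)\big)$. The point of integrating once is precisely that the gradient of the Duhamel term costs only the \emph{integrable} smoothing factor $\|\partial_x \re^{\partial_x^2 \tau}\|_{\mathcal{L}(C_{\mathrm{ub}})} \lesssim \tau^{-1/2}$, which allows one to close the estimate in $C_{\mathrm{ub}}^1(\R)$ even though the original Duhamel term in~\eqref{mildform2} cannot be differentiated a second time for a merely Lipschitz $f$. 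Continuity up to $t=0$ in $C_{\mathrm{ub}}^1(\R)$ follows from $v_0 \in C_{\mathrm{ub}}^1(\R)$ together with the vanishing of the Duhamel term as $t\to0^+$. Globality comes from the a priori bound: setting $u := \phi+v_x$ and using uniqueness to identify it with the solution of Lemma~\ref{p:globalcub}, the bound $m_0\le u \le M_0$ keeps $v_x=u-\phi$ in a fixed bounded set, so $G$ remains in a uniform Lipschitz regime and the local solution extends to all $t\ge 0$. A standard parabolic bootstrap for the integrated equation, using $v_0\in C_{\mathrm{ub}}^2(\R)$, then upgrades $v$ so that $u=\phi+v_x \in C\big([0,\infty),C_{\mathrm{ub}}^1(\R)\big)$, as claimed.

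For the propagation of $L^1$-integrability I would compare with the exact traveling wave. Differentiating the profile equation shows that $\Phi(t,x):=\phi(x-ct)$ is itself a global bounded mild solution of~\eqref{modBurgers} with $\Phi(0,\cdot)=\phi$. Scalar viscous conservation laws enjoy the $L^1$-contraction property for pairs of bounded solutions, which I would establish at the level of mild solutions from the comparison principle underlying Lemma~\ref{p:globalcub} (via Kato's inequality applied to the smooth-flux approximants of Lemma~\ref{p:globalsmooth} and passage to the limit). Since $u_0 - \Phi(0,\cdot) = u_0-\phi$ is $L^1$-integrable, this gives $\|u(t,\cdot)-\phi(\cdot-ct)\|_{L^1} \le \|u_0-\phi\|_{L^1}$ for all $t\ge 0$. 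Combining with $\|\phi(\cdot-ct)-\phi\|_{L^1} \le |c|\,t\,\|\phi'\|_{L^1}$, which is finite because the entropy shock profile approaches its asymptotic states exponentially so that $\phi' \in L^1(\R)$, the triangle inequality yields $u(t,\cdot)-\phi \in L^1(\R)$ for every $t\ge 0$, with $L^1$-norm growing at most linearly, consistent with mass being transported at the shock speed.

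The main obstacle I anticipate is the regularity gain: producing a genuine $C_{\mathrm{ub}}^1$-solution, continuous up to $t=0$, for a flux that is only locally Lipschitz, where $f'$ is unavailable and the solution is typically not $C^2$ even for positive times, as the explicit modular profile already shows. Integrating the conservation law is exactly the device that circumvents this, trading the non-differentiable term $f(u)_x$ for the gradient-dependent but $f'$-free term $G(x,v_x)$ whose Duhamel contribution has an integrable singularity. The remaining care lies in the bookkeeping of continuity up to $t=0$ in the higher-regularity space and in justifying the $L^1$-contraction at the level of mild solutions for the nonsmooth flux through the smooth approximations of Lemmas~\ref{p:globalsmooth}--\ref{p:globalcub}.
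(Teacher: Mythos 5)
Your first half is essentially the paper's own argument: the paper also integrates the conservation law once so that the nonlinearity becomes $c v_\xi + f(v_\xi+\phi)-f(\phi)$, a locally Lipschitz map from $C_{\mathrm{ub}}^1(\R)$ to $C_{\mathrm{ub}}(\R)$ involving no derivative of $f$, solves the integrated equation by semilinear parabolic theory with $v_0\in C_{\mathrm{ub}}^2(\R)$, differentiates the Duhamel formula to recover $z=v_\xi$, and identifies $\phi+z$ with the global $C_{\mathrm{ub}}$-solution of Lemma~\ref{p:globalcub} to get the a priori bound that rules out blow-up. The only cosmetic differences are that the paper works in the co-moving frame $\xi=x-ct$ (so that $\phi$ is a genuine steady state and no limit $\ell$ needs to be identified) and delegates the $C_{\mathrm{ub}}^2$-regularity of $v$ to the abstract theory in~\cite{LUN} rather than to a bootstrap; your bootstrap step is the one place you should be careful, since $\partial_x^2\re^{\partial_x^2(t-s)}$ applied to $G(\cdot,v_x(s))$ costs the non-integrable factor $(t-s)^{-1}$ and you must exploit the Lipschitz (H\"older) regularity of $f(u(s,\cdot))$ in $x$ to close it.

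For the propagation of $L^1$-integrability the two arguments genuinely diverge. The paper simply reruns the fixed-point/Gr\"onwall argument for the mild formulation of the perturbation equation in the space $C_{\mathrm{ub}}(\R)\cap L^1(\R)$, using $\|\partial_\xi\re^{\partial_\xi^2 t}\|_{L^1\to L^1}\lesssim t^{-1/2}$ and the Lipschitz bound $\|f(z+\phi)-f(\phi)\|_1\le L\|z\|_1$, obtaining $\|z(t,\cdot)\|_1\le M\|z_0\|_1$ directly. You instead invoke the $L^1$-contraction against the traveling wave. That is a stronger statement than the lemma needs, and its proof is not free: Kato's inequality for the smooth-flux approximants presupposes that the difference of the two approximate solutions already lies in $L^1$ with enough decay to kill the boundary terms, so you must first prove exactly the propagation-of-integrability statement (via a Duhamel/Gr\"onwall bound for $w_t=w_{xx}+(bw)_x$ with bounded $b$) before you can upgrade it to a contraction; moreover, the approximation estimates of Lemmas~\ref{p:globalcub}--\ref{p:approxcub1} are only in $L^\infty$ and $W^{1,\infty}$, so passing the contraction to the limiting nonsmooth flux requires an additional Fatou/lower-semicontinuity argument and a verification that $\phi(\cdot-ct)$ is indeed the mild solution with datum $\phi$. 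None of this is wrong, but the direct Gr\"onwall estimate buys the lemma at a fraction of the cost, whereas your route buys the sharper (and here unneeded) non-increase of the $L^1$-norm. Your final triangle-inequality step using $\phi'\in L^1(\R)$ is fine, though the correct justification is not exponential decay (which is not assumed) but monotonicity of bounded orbits of the autonomous profile ODE, which gives $\|\phi'\|_1=|\phi(+\infty)-\phi(-\infty)|$; the paper's own last line tacitly uses the same fact.
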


\section{Finite-time coalescence of interfaces} \label{s:finitetime}

Here we establish finite-time coalescence of interfaces for solutions $u(t,\cdot)$ of~\eqref{modBurgers} with initial data $u(0,\cdot) = u_0 \in C_{\mathrm{ub}}^1(\R)$ of class I or II. We emphasize that solutions with such initial data include all shock waves. On the other hand, anti-shock waves converging to asymptotic limits of opposite signs are not included. We study finite-time coalescence of interfaces of this type of anti-shock waves at the end of this section in the specific setting of the modular Burgers' equation.

\subsection{Solutions with initial data of class I}

Observing that solutions $u(t,x)$ of~\eqref{modBurgers} with initial data $u(0,\cdot) = u_0 \in C_{\mathrm{ub}}^1(\R)$ of type I maintain their asymptotic limits $\phi_\pm$ as $x \to \pm \infty$ for every $t > 0$ by~Lemma~\ref{p:globalcub1}, it readily follows that the solution possesses at least one interface for all $t \geq 0$ since $\phi_+$ and $\phi_-$ have opposite signs. We establish that all interfaces coalesce to a single one within finite time in this case. 

\begin{theorem} \label{t:shockopposite}
Let $f \colon \R \to \R$ be locally Lipschitz continuous and $u_0 \in C_{\mathrm{ub}}^1(\R)$. Suppose $u_0(x)$ converges to asymptotic limits $\phi_\pm$ as $x \to \pm \infty$ such that $\phi_+$ and $\phi_-$ have opposite signs and the Gel'fand-Oleinik entropy condition~\eqref{Gelfand} holds. Moreover, assume that $u_0 - \phi_\pm$ is $L^1$-integrable on $\R_\pm$ and we have $u_0(x) \in [\min\{\phi_-,\phi_+\},\max\{\phi_-,\phi_+\}]$ for all $x \in \R$. Let $u \in C\big([0,\infty),C_{\mathrm{ub}}^1(\R)\big)$ be the global mild solution of~\eqref{modBurgers}, established in Lemma~\ref{p:globalcub1}. Then, there exists a time $T > 0$ such that for all $t > T$ the solution $u(t,\cdot)$ possesses precisely one zero.
\end{theorem}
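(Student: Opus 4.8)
My plan is to reduce to a smooth flux and then combine the Sturm theorem of~\cite{ANG} with mass and energy identities on the regions between consecutive interfaces, using the entropy condition~\eqref{Gelfand} to upgrade the resulting monotone decay to extinction in finite time. After possibly replacing $u(t,x)$ by $-u(t,-x)$ (which solves~\eqref{modBurgers} with flux $\hat f(s)=f(-s)$) I may assume $\phi_-<0<\phi_+$, so that $\phi_{\min}=\phi_-$ and $\phi_{\max}=\phi_+$; the comparison principle then gives $\phi_-\le u(t,x)\le\phi_+$ for all $t,x$, and since the asymptotic limits have opposite signs the solution has at least one zero for every $t\ge0$. It therefore suffices to exclude two or more zeros for large $t$. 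The whole argument will be carried out for the smooth regularizations $\tilde u$ supplied by Lemma~\ref{p:globalsmooth} and Lemma~\ref{p:globalcub}, and the conclusion will be transferred back to $u$ using the $C^1_{\mathrm{ub}}$-approximation of Lemma~\ref{p:approxcub1}: a single nondegenerate zero of $\tilde u(t,\cdot)$ persists under a $C^1$-small perturbation, while the uniform $C^0$-closeness forbids any further zero of $u(t,\cdot)$ in the region where $\tilde u(t,\cdot)$ is bounded away from $0$.

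For a smooth flux $f$ the solution solves the linear parabolic equation $u_t=u_{xx}+f'(u)u_x$ with bounded coefficients, so the Sturm theorem of~\cite{ANG} applies: the number $z(t)$ of zeros of $u(t,\cdot)$ is finite for every $t>0$, nonincreasing in $t$, and drops strictly whenever a multiple zero forms. Since $z(t)\ge1$, the statement reduces to ruling out the scenario in which $z(t)\equiv N\ge2$ on some half-line $[t_0,\infty)$. In that scenario no coalescence occurs for $t\ge t_0$, so there are $N$ disjoint, non-colliding, simple interface curves $\xi_1(t)<\dots<\xi_N(t)$, and between two consecutive interfaces $u$ keeps a fixed sign and vanishes at the moving endpoints.

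On an internal interval $(\xi_i(t),\xi_{i+1}(t))$ the boundary flux terms cancel because $u=0$ there, which yields the clean identities
\[
\frac{\de}{\de t}\int_{\xi_i(t)}^{\xi_{i+1}(t)} u\,\de x = u_x(t,\xi_{i+1}(t))-u_x(t,\xi_i(t)), \qquad \frac{\de}{\de t}\int_{\xi_i(t)}^{\xi_{i+1}(t)} u^2\,\de x = -2\int_{\xi_i(t)}^{\xi_{i+1}(t)} u_x^2\,\de x,
\]
the flux contribution in the second identity dropping out because a primitive of $f$ is continuous and $u$ vanishes at both endpoints. Hence the mass and the energy of every internal hump are strictly monotone as long as the hump survives, with the signs of $u_x$ at the endpoints making the mass of a positive (resp.\ negative) hump strictly decreasing (resp.\ increasing) in modulus. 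The entropy condition~\eqref{Gelfand}, which in terms of the speed~\eqref{rankine} reads $f(s)+cs<f(\phi_\pm)+c\phi_\pm$ for all $s\in(\phi_-,\phi_+)$, is the ingredient I would use to bound the rate in these identities from below by a positive constant depending only on $u_0$ and $f$, thereby forcing the offending humps to disappear within an explicitly estimable time and producing the contradiction with $N\ge2$.

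The hard part is precisely this last step: turning asymptotic decay into genuine finite-time coalescence. The energy identity on its own, combined with the Poincar\'e inequality on $(\xi_i,\xi_{i+1})$, only gives exponential decay of the internal oscillation, which is fully compatible with interfaces accumulating near the limiting shock position without ever colliding --- the obstruction already emphasized in the introduction. The crux is therefore to extract from the strict entropy inequality a differential inequality with a sublinear right-hand side, or equivalently a mass flux across the outer interfaces that stays bounded away from zero, so that the relevant mass reaches its single-shock value in finite time; it is exactly here that the analysis is expected to break down for class III, where~\eqref{Gelfand} fails. A secondary, more technical obstacle is to make the regularization transfer quantitative, ensuring that the extinction-time bound obtained for $\tilde u$ does not degenerate as $\tilde f\to f$, so that it descends to the locally Lipschitz solution $u$ furnished by Lemma~\ref{p:globalcub1}.
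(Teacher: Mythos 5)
Your overall architecture (regularize the flux, invoke the Sturm theorem to get finitely many non-colliding simple interfaces in the contradiction scenario, and then run mass/energy identities) matches the paper's strategy at a high level, but the proposal has a genuine gap at exactly the step you flag yourself as ``the hard part'': you never actually convert the entropy condition~\eqref{Gelfand} into a differential inequality with a rate bounded away from zero. Your internal-hump identities give $\frac{\de}{\de t}\int_{\xi_i}^{\xi_{i+1}}u\,\de x = u_x(t,\xi_{i+1}(t))-u_x(t,\xi_i(t))$ and an energy dissipation $-2\int u_x^2$, but the right-hand sides degenerate as the hump flattens: the endpoint slopes can tend to $0$, and the energy identity with Poincar\'e yields only exponential decay. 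Nothing in what you wrote rules out interfaces surviving for all time while the humps between them shrink asymptotically, which is precisely the obstruction the theorem must overcome. Since you do not supply the mechanism, the proof is incomplete.

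The paper closes this gap with a construction absent from your proposal: an auxiliary \emph{inner} standing wave. After normalizing $f(\phi_+)=f(\phi_-)$, Lemma~\ref{lem:tech0} produces a smooth $\tilde f$ admitting, besides the outer front with limits $\phi_\pm$, a second standing front $\psi$ with limits $\psi_\pm\in(\phi_-,\phi_+)$ at an intermediate flux level $\kappa\in(f(0),f(\phi_\pm))$ --- this is exactly where~\eqref{Gelfand} is used. One then studies the zeros of the difference $v=\tilde u-\psi(\cdot-x_1)$ and measures \emph{half-line} masses $\int_{-\infty}^{\tilde\xi_2(t)}(\tilde u-\phi_-)\,\de x$ and $\int_{\tilde\xi_2(t)}^{\infty}(\phi_+-\tilde u)\,\de x$ from the middle interface $\tilde\xi_2(t)$ of $v$. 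Because the flux of $\tilde u$ at $\pm\infty$ is $\tilde f(\phi_\pm)$ while at the interface of $v$ it equals $\tilde f(\psi(\tilde\xi_2(t)))\le\kappa$, these masses decrease at the definite linear rate $\kappa-f(\phi_\pm)<0$ (up to boundary-motion corrections), while they are bounded below by $(\psi_--\phi_-)$ times the interface separation; this forces the interfaces of $v$ to cross before an explicit time $T$ independent of the regularization, giving the contradiction. The same comparison with $\psi$ also delivers the uniform lower bound $\tilde u_x\ge\psi'>0$ at the surviving single zero, which is what lets the $W^{1,\infty}$-approximation of Lemma~\ref{p:approxcub1} transfer ``exactly one zero'' from $\tilde u$ back to $u$ --- another point your proposal asserts but does not secure, since you give no regularization-independent nondegeneracy of the remaining zero. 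To repair your argument you would need to replace the internal-hump bookkeeping by these half-line masses anchored at an interface of $\tilde u-\psi$, or find an equivalent way to make the flux imbalance $f(\phi_\pm)-f(z)$, $z$ interior, appear as the decay rate.
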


The proof of Theorem~\ref{t:shockopposite} is based on ideas developed in~\cite{PEDR}, where it is shown that the interfaces of odd shock waves in the modular Burgers' equation coalesce to a single one within finite time. The analysis in~\cite{PEDR} relies on a differential inequality for the mass measured with respect to the fixed interface at $0$. Indeed, due to odd symmetry, $0$ is necessarily an interface of the shock wave for all time and must be the middle interface.

In the general setting considered here, without the presence of an odd symmetry, interfaces are a priori not fixed, which suggests mass functions of the form
\begin{align} \label{e:massfunctions}
\mathcal M_1(t) = \int_{-\infty}^{\xi_2(t)} \left(u(t,x) - \phi_-\right) \de x, \qquad \mathcal M_2(t) = \int_{\xi_2(t)}^\infty \left(\phi_+ - u(t,x)\right) \de x,
\end{align}
where $\xi_2(t)$ is an interface of $u(t,\cdot)$, which now depends on time. As in~\cite{PEDR} we aim to show that the assumption that $\xi_2(t)$ is an interface lying strictly in between two other interfaces $\xi_1(t), \xi_3(t)$ leads to a contradiction with certain inequalities obeyed by the mass functions $\mathcal M_1(t)$ and $\mathcal M_2(t)$. This then yields an explicit time $T > 0$ such that $\xi_1(t) < \xi_2(t) < \xi_3(t)$ cannot hold for $t > T$. 

To derive the desired inequalities for $\mathcal M_1(t)$ and $\mathcal M_2(t)$, a standard strategy is to differentiate with respect to time (using the Leibniz' integral rule) and use the equation~\eqref{modBurgers} to express temporal derivatives of $u(t,x)$. Yet, as mentioned in~\S\ref{s:wellposedness}, it cannot be expected in the case of a locally Lipschitz continuous flux function $f$ that $u(t,x)$ is a classical solution of~\eqref{modBurgers}, which is differentiable with respect to time and twice differentiable with respect to space. In addition, even if the flux function $f$ were smooth, the interface $\xi_2(t)$, being a root of the $C^1$-function $u(t,x)$, is not necessarily differentiable. In fact, the upcoming analysis in~\S\ref{s:interfacedynamics} shows that $\xi_2(t)$ may fail to be differentiable if two interfaces collide.

To address the first challenge we approximate the solution $u(t,x)$ of~\eqref{modBurgers} by a classical solution $\tilde{u}(t,x)$ of the regularized problem~\eqref{modBurgersApprox}, where $\tilde{f}$ is a smooth approximation of $f$ and $\tilde{u}(t,\cdot)$ has the same initial condition as $u(t,\cdot)$. We then aim to show that any three interfaces $\tilde{\xi}_1(t) \leq \tilde{\xi}_2(t) \leq \tilde{\xi}_3(t)$ of $\tilde{u}(t,\cdot)$ coalesce to a single interface within finite time. We address the second challenge by approximating $\tilde{\xi}_2(t)$ on a compact time interval by a sequence of smooth approximations $\tilde{\xi}_{2,n}(t)$. Thus, the mass functions~\eqref{e:massfunctions} with $u(t,x)$ replaced by $\tilde{u}(t,x)$ and $\xi_2(t)$ by $\tilde{\xi}_{2,n}(t)$ are differentiable with respect to $t$ and we can obtain the desired inequalities, which then yield that the interfaces $\tilde{\xi}_1(t),\tilde{\xi}_2(t)$ and $\tilde{\xi}_3(t)$ of $\tilde{u}(t,x)$ coalesce to a single interface before an explicit time $T > 0$, which is independent of the approximation function $\tilde{f}$.

The approximation of the flux $f$ by a smooth function $\tilde{f}$ introduces an additional difficulty. Even with control on the norm $\|u(t,\cdot) - \tilde{u}(t,\cdot)\|_{W^{1,\infty}}$ through Lemma~\ref{p:approxcub1}, the fact that $\tilde{u}(t,\cdot)$ possesses a single interface is not sufficient to conclude that $u(t,\cdot)$ has a single interface because interfaces of $u(t,\cdot)$ might accumulate close to the single interface of $\tilde{u}(t,\cdot)$. We address this issue by bounding the derivative $\partial_x \tilde{u}(t,\cdot)$ at the interface away from $0$, precluding the accumulation of multiple interfaces of $u(t,\cdot)$ close to the single interface of $\tilde{u}(t,\cdot)$. 

We bound the derivative of $\tilde{u}(t,\cdot)$ away from $0$ by considering a traveling shock-wave solution $\tilde u_{\mathrm{tw}}(t,x) = \psi(x - ct)$ of~\eqref{modBurgersApprox}, which propagates at some speed $c \in \R$ and connects asymptotic limits $\psi_{\pm}$ of opposite signs satisfying $|\psi_\pm| < |\phi_\pm|$. Upon switching to a co-moving frame, we may without loss of generality assume that $c = 0$. We then show, with the same methods as before, that all interfaces of the difference $v(t,\cdot) = \tilde{u}(t,\cdot) - \psi$ converge to a single interface within finite time, see Figure~\ref{fig:interfaces}. This then yields the desired lower bound on $\|\partial_x \tilde{u}(t,\cdot)\|_{L^\infty}$. Using that $\|u(t,\cdot) - \tilde{u}(t,\cdot)\|_{W^{1,\infty}}$ can be taken sufficiently small by taking a better approximation $\tilde{f}$ of $f$ if necessary, we thus conclude that the solution $u(t,\cdot)$ must have a single interface for $t > T$, since the same holds for the approximation $\tilde{u}(t,\cdot)$.

\begin{figure}[htb]
	\centering
 \begin{subfigure}{0.49\textwidth}
    \includegraphics[trim = 1 1 1 1, clip, scale=0.6]{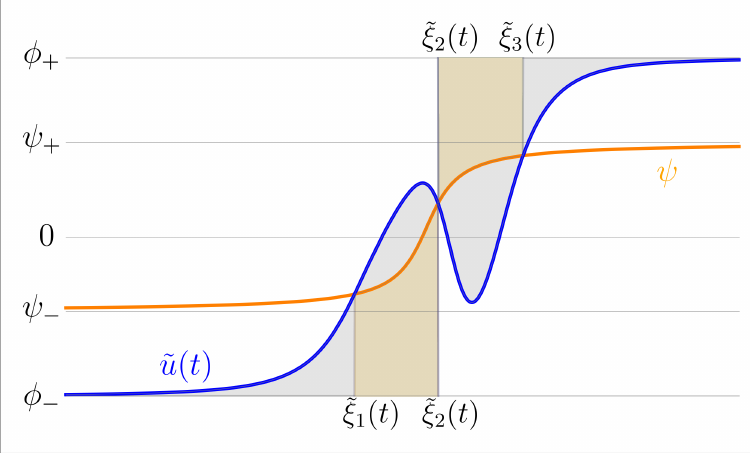}
\end{subfigure}
    \hfill
\begin{subfigure}{0.49\textwidth}
    \includegraphics[scale=0.6]{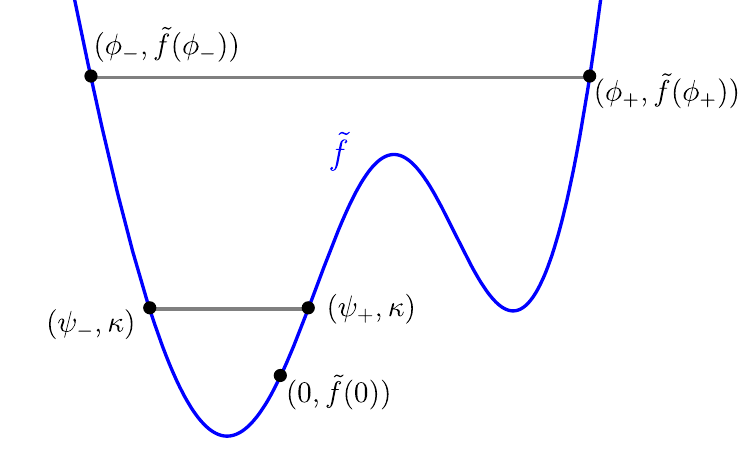}
\end{subfigure}
	\caption{Left: the approximate shock-wave solution $\tilde{u}(t,\cdot)$ of~\eqref{modBurgersApprox} with asymptotic limits $\phi_\pm$, the traveling shock wave $\psi$ with asymptotic limits $\psi_\pm$ and the interfaces $\tilde{\xi}_1(t),\tilde{\xi}_2(t)$ and $\tilde{\xi}_3(t)$ of the difference $v(t,\cdot) = \tilde{u}(t,\cdot) - \psi$. In the proof of Theorem~\ref{t:shockopposite} we bound the shaded areas above and below the graph of $\tilde{u}(t,\cdot)$ from below by the orange subareas. Right: the smooth approximation $\tilde{f}$ of the flux function $f$, established in Lemma~\ref{lem:tech0}. One observes that the regularized problem~\eqref{modBurgersApprox} admits a standing shock-wave solution connecting the asymptotic limits $\phi_\pm$ and one connecting the asymptotic states $\psi_\pm$, where $\phi_- < \psi_- < 0 < \psi_+ < \phi_+$.}
	\label{fig:interfaces}
\end{figure}

Before we proceed with the proof of Theorem~\ref{t:shockopposite}, we first state the following technical lemma, which establishes a suitable smooth approximation $\tilde{f}$ of the flux function $f$ in~\eqref{modBurgers}. Naturally, we require that $\tilde{f}$ lies sufficiently close to $f$ and its derivative is well-behaved. Moreover, we wish that the regularized problem~\eqref{modBurgersApprox} admits a traveling shock-wave solution connecting the asymptotic states $\phi_\pm$, but also a traveling shock wave with asymptotic limits $\psi_\pm$ of opposite signs lying in between $\phi_-$ and $\phi_+$, see also Figure~\ref{fig:interfaces}. Without loss of generality, we can restrict to the case $\phi_- < 0 < \phi_+$ and we may assume $f(\phi_+) = f(\phi_-)$ by replacing $f(u)$ by $f(u)+cu$, where $c$ is given by the Rankine-Hugoniot condition~\eqref{rankine}. 

\begin{lemma} \label{lem:tech0}
Let $f$ be locally Lipschitz continuous and let $\phi_\pm \in \R$ with $\phi_- < 0 < \phi_+$. Suppose that $f(\phi_+) = f(\phi_-)$ and the Gel'fand-Oleinik entropy condition
\begin{align}
f(z) - f(\phi_\pm) < 0, \label{Gelfand2}
\end{align}
holds for all $z \in (\phi_-,\phi_+)$. Then, for each $\kappa \in (f(0),f(\phi_\pm))$, there exists a constant $R > 0$ such that for all  $\delta \in (0,\kappa - f(0))$, there exist $\tilde{f} \in C^\infty(\R)$ and $\psi_\pm \in (\phi_-,\phi_+)$ with $\psi_- < 0 < \psi_+$ such that the following assertions hold:
\begin{itemize}
    \item[i)] For all $z \in (\phi_-,\phi_+)$ we have 
    \begin{align}
\tilde{f}(\phi_+) = \tilde{f}(\phi_-), \qquad \tilde{f}'(\phi_\pm) \neq 0, \qquad \tilde{f}(z) - \tilde{f}(\phi_\pm) < 0. \label{approx1}
\end{align}
\item[ii)] For all $z \in (\psi_-,\psi_+)$ it holds
\begin{align} \label{approx2}
\tilde{f}(\psi_+) = \kappa = \tilde{f}(\psi_-), \qquad \tilde{f}'(\psi_\pm) \neq 0, \qquad \tilde{f}(z) - \tilde{f}(\psi_\pm) < 0.
\end{align}
\item[iii)] For all $z \in [\phi_-,\phi_+]$ we have
\begin{align} \label{approx5}
\big|f(z)-\tilde{f}(z)\big| < \delta, \qquad \big|\tilde{f}'(z)\big| < R.
\end{align}
\end{itemize}
\end{lemma}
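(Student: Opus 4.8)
The plan is to construct $\tilde f$ by mollifying $f$ globally and then overwriting it, on four small disjoint neighbourhoods of the distinguished points $\phi_-,\psi_-,\psi_+,\phi_+$, with short affine pieces of prescribed value and slope, reconnected to the mollification through a fixed smooth cut-off. First I would locate $\psi_\pm$. Writing $f_* := f(\phi_\pm)$, the entropy condition~\eqref{Gelfand2} gives $f(z) < f_*$ for $z \in (\phi_-,\phi_+)$, while $f(0) < \kappa < f_*$. Since $f$ is continuous, the sublevel set $\{z : f(z) < \kappa\}$ is open and contains $0$; let $(\psi_-,\psi_+)$ denote its connected component containing $0$. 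Because $f(\phi_\pm) = f_* > \kappa$, continuity forces $\phi_- < \psi_- < 0 < \psi_+ < \phi_+$ together with $f(\psi_\pm) = \kappa$ and $f(z) < \kappa$ for all $z \in (\psi_-,\psi_+)$. These are the sought interior states.

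Fix a Lipschitz constant $L$ of $f$ on a neighbourhood of $[\phi_-,\phi_+]$, a slope $\sigma \in (0,L]$, and a standard mollifier $\rho_\epsilon$, and set $R := C'L$ once and for all, with $C'$ depending only on $\rho$ and the cut-off profile but not on $\delta$. Given $\delta$, let $f_\epsilon = f * \rho_\epsilon$, so that $|f_\epsilon'| \le L$ and $|f - f_\epsilon| \le C L \epsilon$ on $[\phi_-,\phi_+]$. To each distinguished point $p$ I associate the affine target through its prescribed value (namely $f_*$ at $\phi_\pm$ and $\kappa$ at $\psi_\pm$) with slope $-\sigma$ at the \emph{left} endpoints $\phi_-,\psi_-$ and $+\sigma$ at the \emph{right} endpoints $\phi_+,\psi_+$; by this sign choice each line passes through $(p,f(p))$ and lies strictly below its prescribed value on the interior side of the corresponding entropy interval. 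On $[p-r_0,p+r_0]$ I set $\tilde f$ equal to this line $\ell_p$ and interpolate, via the cut-off $\chi$, between $\ell_p$ and $f_\epsilon$ on a transition layer of width $\mathrm{O}(r_0)$. The scales are chosen in the order $\sigma,R$ first; then $r_0 \to 0$ as $\delta \to 0$, small enough that the four patches are disjoint and that $|\ell_p - f| \le (\sigma+L)r_0 < \delta$ on each patch; and finally $\epsilon \le r_0$, small enough that $CL\epsilon$ lies below both $\delta$ and the (now fixed) entropy gaps of $f$ to the levels $f_*$ and $\kappa$ over the complements of the patches.

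With this $\tilde f$, the equalities $\tilde f(\phi_+) = \tilde f(\phi_-) = f_*$, $\tilde f(\psi_\pm) = \kappa$ and the nonvanishing slopes $\tilde f'(\phi_\pm), \tilde f'(\psi_\pm) = \pm \sigma$ hold by construction. The strict entropy inequalities in~(i) and~(ii) hold for two complementary reasons: on the patches they are forced by the sign of $\sigma$, and on the complementary compact sets they follow from $f < f_*$ (resp.\ $f < \kappa$) with a positive gap together with $|f - f_\epsilon| \le CL\epsilon$ small; the estimate $|f - \tilde f| < \delta$ of~(iii) follows likewise, being a convex combination of two functions within $\delta$ of $f$. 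I expect the \textbf{main obstacle} to be the derivative bound $|\tilde f'| < R$ with $R$ \emph{independent of} $\delta$: on each transition layer $\tilde f'$ contains the term $\chi'(\ell_p - f_\epsilon)$ with $|\chi'| = \mathrm{O}(1/r_0)$, which a priori diverges as $r_0 \to 0$. This is precisely why $\epsilon \le r_0$ is imposed: then $|\ell_p - f_\epsilon| \le (\sigma+L)r_0 + CL\epsilon = \mathrm{O}(Lr_0)$ on the layer, so $|\chi'(\ell_p - f_\epsilon)| = \mathrm{O}(L)$ and hence $|\tilde f'| \le C'L = R$ uniformly in $\delta$. Reconciling this uniform $C^1$ bound with the simultaneous need for strict entropy margins near $\psi_\pm$ and $\phi_\pm$ — margins that necessarily degenerate as the patches shrink — is the technical heart of the argument.
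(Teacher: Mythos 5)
Your construction is correct, and it reaches the lemma by a genuinely different route from the paper. The paper first \emph{clips} the flux, setting $g=\min\{f+\tfrac{\delta}{4},f(\phi_-)\}$, so that $g$ is constant near $\phi_\pm$; mollifying then automatically preserves the exact endpoint values and the global bound $g_\eta\le f(\phi_\pm)$, strictness on $(\phi_-,\phi_+)$ is forced by subtracting a small bump supported there, the states $\psi_\pm$ are read off from the sublevel set $\tilde g^{-1}[\{z<\kappa\}]$ of the \emph{already smoothed} function (so they depend on $\delta$), and the nonvanishing of $\tilde f'(\phi_\pm),\tilde f'(\psi_\pm)$ is installed only at the very end by adding tiny localized linear terms. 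With that ordering the derivative bound in~\eqref{approx5} is essentially free, since $|g'|\le|f'|$ a.e.\ and convolution with a unit-mass mollifier cannot increase the $L^\infty$ bound. You instead locate $\psi_\pm$ directly from $f$ as the boundary of the component of $\{f<\kappa\}$ containing $0$ (which is legitimate and gives $\delta$-independent $\psi_\pm$, a slightly stronger conclusion), mollify $f$ plainly, and then perform surgery at the four distinguished points with affine pieces that carry the prescribed values and the nonzero slopes from the outset. The price is the cut-off term $\chi'(\ell_p-f_\epsilon)$ in $\tilde f'$, which you correctly identify as the only real danger for a $\delta$-independent $R$, and which you neutralize by the coupling $\epsilon\le r_0$ so that $|\chi'|\,|\ell_p-f_\epsilon|=\mathcal O(r_0^{-1})\cdot\mathcal O(Lr_0)=\mathcal O(L)$; the order of choices ($\sigma,R$ first, then $r_0$ depending on $\delta$, then $\epsilon$ depending on $r_0$ and on the positive entropy gaps of $f$ over the compact complement of the patches) makes the strict inequalities in~\eqref{approx1} and~\eqref{approx2} go through on patches, transition layers, and their complement alike. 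Both arguments are complete; yours trades the paper's clipping trick and final derivative adjustment for a more hands-on gluing whose one delicate estimate you handle explicitly.
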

\begin{proof}
We first recall that, since $f$ is locally Lipschitz continuous, Rademacher's theorem asserts that $f$ is differentiable almost everywhere and its derivative $f'$ is essentially bounded on each bounded interval. We denote 
\begin{align*}R_1 := \sup\{|f'(u)| : u \in [\phi_-,\phi_+]\}.\end{align*}

Take $\delta \in (0,\kappa - f(0))$. Let $\Phi \colon \R \to \R$ be a mollifier with $\|\Phi\|_1 = 1$, $\Phi(x) > 0$ for $x \in (\phi_-,\phi_+)$ and $\Phi(x) = 0$ for $x \in \R \setminus (\phi_-,\phi_+)$. Set $\Phi_\eta(x) = \Phi(x/\eta)/\eta$ for $\eta > 0$. The function $g \colon \R \to \R$ given by $g(x) = \min\{f(x) + \frac{\delta}{4},f(\phi_-)\}$ is locally Lipschitz continuous. Moreover, it holds $|g'(x)| \leq |f'(x)|$ for each $x \in [\phi_-,\phi_+]$. Since $g$ is continuous, it can be approximated by the sequence $g_\eta := \Phi_\eta \ast g$ of smooth functions. That is, there exists $\eta_0 > 0$ such that
\begin{align*} |g_\eta(u) - g(u)| < \frac{\delta}{4}\end{align*}
for all $u \in [\phi_-,\phi_+]$ and $\eta \in (0,\eta_0)$. By construction we have $g_\eta(x) \leq f(\phi_\pm)$ for all $x \in \R$ and $\eta > 0$. In addition, since $g$ is constant in a neighborhood of $\phi_\pm$ and it holds $g_\eta' = \Phi_\eta \ast g'$, there exists $\eta_1 \in (0,\eta_0)$ such that $g_{\eta}(\phi_\pm) = f(\phi_\pm)$ and $|g_\eta'(u)| \leq \|\Phi_\eta\|_1 R_1 = R_1$ for all $\eta \in (0,\eta_1]$. We conclude that $\tilde{g} = g_{\eta_1} - \delta \Phi/(4\|\Phi\|_\infty)$ is a smooth function which satisfies $\tilde{g}(z) < \tilde{g}(\phi_\pm) = f(\phi_\pm)$ for $z \in (\phi_-,\phi_+)$. Moreover, it holds
\begin{align*} |\tilde{g}(u) - f(u)| \leq |g_{\eta_1}(u) - g(u)| + \frac{\delta}{4} + |g(u) - f(u)| < \frac{3\delta}{4},\end{align*}
and
\begin{align*}
|\tilde{g}'(u)| \leq R_1 + \frac{\delta\|\Phi'\|_\infty}{4\|\Phi\|_\infty},
\end{align*}
for $u \in [\phi_-,\phi_+]$.

Since we have $\tilde{g}(0) < f(0) + \delta < \kappa < f(\phi_\pm) = \tilde{g}(\phi_\pm)$, the open set $\tilde{g}^{-1}[\{z \in \R : z < \kappa\}]$ must contain an interval $(\psi_-,\psi_+)$ with $\phi_- < \psi_- < 0 < \psi_+ < \phi_+$ and $\tilde{g}(\psi_+) = \kappa = \tilde{g}(\psi_-)$. Hence, it holds $\tilde{g}(z) < \tilde{g}(\psi_\pm)$ for $z \in (\psi_-,\psi_+)$. Finally, set $d = \frac12 \min\{\phi_+-\psi_+,\psi_+,-\psi_-,\psi_--\phi_-\} > 0$ and let $\Psi \colon \R \to \R$ be an even, smooth cut-off function such that $\Psi(0) = 1$, $\|\Psi\|_\infty \leq 1$, $\Psi(x) > 0$ for all $x \in (-d,d)$ and $\Psi(x) = 0$ for all $x \in \R \setminus (-d,d)$. Recalling the properties of the function $\tilde{g}$, we conclude that for any $\kappa_\pm, \lambda_\pm \in [0,\delta/(4(\phi_+ - \phi_-))]$, the smooth function 
\begin{align*}
\tilde{f}(x) &= \tilde{g}(x) - \kappa_- (x-\phi_-) \Psi(x-\phi_-) - \lambda_- (x-\psi_-) \Psi(x-\psi_-)  + \lambda_+ (x-\psi_+) \Psi(x-\psi_+)\\ 
&\qquad + \, \kappa_+ (x-\phi_+) \Psi(x-\phi_+), 
\end{align*}
satisfies~\eqref{approx5}, it holds
\begin{align*}
\tilde{f}(\phi_+) = \tilde{f}(\phi_-), \qquad \tilde{f}'(\phi_\pm) = \tilde{g}'(\phi_\pm) \pm \kappa_\pm, \qquad \tilde{f}(z) - \tilde{f}(\phi_\pm) < 0,
\end{align*}
for all $z \in (\phi_-,\phi_+)$, and we have
\begin{align}
\tilde{f}(\psi_+) = \kappa = \tilde{f}(\psi_-), \qquad \tilde{f}'(\psi_\pm) = \tilde{g}'(\psi_\pm) \pm \lambda_\pm, \qquad \tilde{f}(z) - \tilde{f}(\psi_\pm) < 0,
\end{align}
for all $z \in (\psi_-,\psi_+)$. Hence, choosing $\kappa_\pm,\lambda_\pm \in [0,\delta/(4(\phi_+ - \phi_-))]$ in such a way that $\tilde{f}'(\phi_\pm), \tilde{f}'(\psi_\pm) \neq 0$, we find that $\tilde{f}$ satisfies~\eqref{approx1},~\eqref{approx2}, and~\eqref{approx5}.
\end{proof}

Having established a suitable approximation $\tilde{f}$ of the flux function $f$, we now provide the proof of Theorem~\ref{t:shockopposite} following the outline sketched above. 

\begin{proof}[Proof of Theorem~\ref{t:shockopposite}]
We consider the case $\phi_- < 0 < \phi_+$. The case $\phi_+ < 0 < \phi_-$ is handled analogously. Clearly, the zeros (including their multiplicities) of $u(t,\cdot)$ are the same as those of the translate $u(t,\cdot - c t)$ for any $t \geq 0$. Thus, upon replacing $f(u)$ by $f(u) + cu$ in~\eqref{modBurgers}, where $c$ is given by~\eqref{rankine}, we may assume
\begin{align*}
f(\phi_+) = f(\phi_-),
\end{align*}
so that~\eqref{Gelfand} yields
\begin{align*}
f(z) - f(\phi_+) = f(z) - f(\phi_-)  < 0,
\end{align*}
for all $z \in (\phi_-,\phi_+)$. By continuity of $f$ there exists $\eta > 0$ such that for all $z \in [\phi_-,\phi_- + \eta] \cup [\phi_+ - \eta,\phi_+]$ it holds
\begin{align} \label{approx11}
f(z) > \frac{f(\phi_\pm) + f(0)}{2}.
\end{align}
Note that since $f(0) < f(\phi_\pm)$, we must have $\eta < |\phi_\pm|$. Since $u_0$ is continuous and converges to $\phi_\pm \neq 0$ as $x \to \pm \infty$, the function $u_0 - \phi_+ + \eta$ possesses a largest root $\xi_+$ and $u_0 - \phi_- - \eta$ possesses a smallest root $\xi_-$. We set
\begin{align} \label{upperboundtime} T = \frac{\displaystyle \max\left\{\int_{-\infty}^{\xi_+} \left(u_0(x) - \phi_-\right) \de x, \int_{\xi_-}^\infty \left(\phi_+ - u_0(x)\right) \de x\right\}}{f(\phi_+) - f(0)} > 0.\end{align}

We argue by contradiction and assume that there exists $\tau > T$ such that $u(\tau,\cdot)$ has at least two distinct zeros. Then, since $u$ is continuously differentiable, there must exist a zero $x_0$ of $u(\tau,\cdot)$ with $u_x(\tau,x_0) \leq 0$. Fix $\kappa > f(0)$ such that
\begin{align} \label{approx9}
(\kappa - f(0))\tau < \left(f(\phi_-) - f(0)\right)\left(\tau - T\right), \qquad \kappa < \frac{f(0) + f(\phi_\pm)}{2}.
\end{align}
Denote by $L > 0$ the Lipschitz constant of $f$ on $[\phi_-,\phi_+]$ and let $R > 0$ be the constant from Lemma~\ref{lem:tech0} (which depends on $\kappa$). Fix $\varepsilon > 0$ such that 
\begin{align} \label{approx6}
(L+2)\varepsilon < \kappa - f(0), \qquad (R + 1) \varepsilon < \kappa - f(0), \qquad \varepsilon < \min\{M_\eta,m_\eta\}.
\end{align}
Finally, let $\delta_0 > 0$ be the constant from Lemma~\ref{p:approxcub1} (which depends on $R,\tau,\varepsilon > 0$) and take $\delta > 0$ such that 
\begin{align} \label{approx10}
\begin{split}
\delta < \min\{\delta_0,\varepsilon,\kappa - f(0)\}, &\qquad \delta < \frac{f(0) + f(\phi_\pm)}{2} - \kappa, \\ \delta \tau < \left(f(\phi_-) - f(0)\right)&\left(\tau - T\right) -  (\kappa - f(0))\tau,
\end{split}
\end{align}
which is possible by~\eqref{approx9}.

By Lemma~\ref{lem:tech0} there exist $\tilde f \in C^\infty(\R)$ and $\psi_\pm \in (\phi_-,\phi_+)$ with $\psi_- < 0 < \psi_+$ satisfying~\eqref{approx1},~\eqref{approx2}, and~\eqref{approx5}. Lemma~\ref{p:approxcub1} then yields a global classical solution~\eqref{regularity2} of~\eqref{modBurgersApprox} with initial condition $\tilde{u}(0,\cdot) = u_0$ satisfying~\eqref{approxub2}. Then, it must hold
\begin{align} \label{approx8}
\tilde{u}_x(\tau,x_0) \leq \varepsilon, \qquad |\tilde{u}(\tau,x_0)| \leq \varepsilon.
\end{align}
On the other hand, the mean value theorem implies
\begin{align*}
\kappa - \tilde f(0) = \tilde f(\psi_\pm) - \tilde f(0) \leq R |\psi_\pm|.
\end{align*}
Combining the latter with~\eqref{approx5}, and~\eqref{approx6} yields
\begin{align} \label{approx7}
|\psi_\pm| \geq \frac{\kappa - f(0) - \delta}{R} \geq \frac{\kappa - f(0) - \varepsilon}{R} > \varepsilon.
\end{align}
On the other hand~\eqref{approx2},~\eqref{approx11}, and~\eqref{approx9} imply
\begin{align} \label{approx12}
\phi_- + \eta < \psi_- < 0 < \psi_+ < \phi_+ - \eta.
\end{align}

By~\eqref{approx1} and~\eqref{approx2} there exist heteroclinic solutions $\phi(x)$ and $\psi(x)$ of the profile equations
\begin{align} \label{profile2}
0 = \phi_\xi + \tilde f(\phi) - \tilde f(\phi_\pm), \qquad 0 = \psi_\xi + \tilde f(\psi) - \tilde f(\psi_\pm),
\end{align}
respectively, converging exponentially to the asymptotic limits $\phi_\pm$ and $\psi_\pm$, respectively, as $x \to \pm \infty$. Since $u_0 - \phi_\pm$ is $L^1$-integrable on $\R_\pm$, so is $u_0 - \phi$. Therefore, Lemma~\ref{p:globalcub1} yields that $\tilde u(t,\cdot) - \phi$ is $L^1$-integrable for all $t \geq 0$. We conclude that $\tilde u(t,\cdot) - \phi_\pm$ is $L^1$-integrable on $\R_\pm$ for all $t \geq 0$. 

Using~\eqref{approx8} and~\eqref{approx7}, and the fact that $\psi(x)$ is strictly monotone and converges to $\psi_\pm$ as $x \to \pm \infty$, there must exist a translate $x_1 \in \R$ such that the point $(x_0,\tilde{u}(\tau,x_0))$ lies on the graph of $\psi(\cdot - x_1)$. Our aim is to show that the difference $v(t,\cdot) = \tilde u(t,\cdot) - \psi(\cdot - x_1)$ has only a single zero at $t = \tau$, which must lie at $x_0$. This then leads to a contradiction with~\eqref{approx9},~\eqref{approx10},~\eqref{approx6}, and~\eqref{approx8} by our choice of constants $\kappa,\varepsilon$ and $\delta$. 

Upon replacing the traveling shock wave $\psi$ by its translate $\psi(\cdot-x_1)$, we may without loss of generality assume $x_1 = 0$. We observe that
\begin{align} \label{regularity3}
v \in C\big([0,\infty),C_{\mathrm{ub}}^1(\R)\big) \cap C\big((0,\infty),C_{\mathrm{ub}}^2(\R)\big) \cap C^1\big((0,\infty),C_{\mathrm{ub}}(\R)\big),
\end{align}
is a global classical solution of the equation
\begin{align} \label{modBurgersDiff}
v_t = v_{xx} + \tilde f(v + \psi)_x - \tilde f(\psi)_x.
\end{align}
We can apply the Sturm theorem,~\cite[Theorem~B]{ANG}, upon recasting~\eqref{modBurgersDiff} as the linear parabolic equation
\begin{align} \label{linear}
 v_t = v_{xx} + b(t,x) v_x + a(t,x) v,
\end{align}
with 
\begin{align*}
b(t,x) = \tilde f'(v(t,x) + \psi(t,x)), \qquad a(t,x) = \psi_x(t,x) \frac{\tilde f'(v(t,x) + \psi(x)) - \tilde f'(\psi(x))}{v(t,x)},\end{align*} 
where we note that $a$, $b, b_x$, and $b_t$ are bounded on the strip $\R \times [0,s]$ for any $s > 0$ by~\eqref{regularity3}, and the fact that $\tilde f$ and $\psi$ and smooth. Applying~\cite[Theorem~B]{ANG} to~\eqref{linear} yields that, if it holds $v(t_0,\xi_0) = 0 = v_x(t_0,\xi_0)$ at some $(t_0,\xi_0) \in (0,\infty) \times \R$, then there exist $\theta \in (0,t_0)$ and a neighborhood $U \subset \R$ of $\xi_0$ such that for $t \in (t_0-\theta,t_0)$, there are at least two zeros of $v(t,\cdot)$ in $U$ and for $t \in (t_0,t_0+\theta)$, there is at most one zero of $v(t,\cdot)$ in $U$. Noting that $v(t,x)$ is continuously differentiable with respect to $x$ and $t$, this leads to two important observations. First, no new zeros of $v(t,\cdot)$ can form dynamically over time. Second, multiple roots are isolated in $\R \times (0,\infty)$. 

Now assume by contradiction that for all $t \in [0,\tau]$, there exist at least two zeros of $v(t,\cdot)$. A consequence of the above two observations, the regularity of $v(t,\cdot)$, and the fact that $v(t,\cdot)$ converges to $\phi_\pm - \psi_\pm$ at $\pm \infty$ with $\phi_- - \psi_- < 0 < \phi_+ - \psi_+$, is that there must be three functions $\tilde{\xi}_{1,2,3} \colon [0,T] \to \R$ which depend continuously on time such that it holds $\tilde{\xi}_1(t) < \tilde{\xi}_2(t) < \tilde{\xi}_3(t)$, $v(t,\tilde{\xi}_i(t)) = 0$ for $i = 1,2,3$, $v(t,x) > 0$ for all $x \in (\tilde{\xi}_1(t),\tilde{\xi}_2(t))$, $v(t,x) < 0$ for all $x \in (\tilde{\xi}_2(t),\tilde{\xi}_3(t))$, and $v_x(t,\tilde{\xi}_2(t)) \leq 0$ for all $t \in [0,T]$. We note that by~\eqref{approx12}, it must hold $\xi_- < \tilde{\xi}_2(0) < \xi_+$. 

Take a sequence $\{\tilde{\xi}_{2,n}\}_n$ of smooth functions converging uniformly in $C([0,\tau])$ to $\tilde{\xi}_2$ as $n \to \infty$.  Define the masses
\begin{align*}
M_{1,n}(t) &= \int_{-\infty}^{\tilde{\xi}_{2,n}(t)} (v(t,x) - \phi_- + \psi_-) \de x\\ 
&= \int_{-\infty}^{\tilde{\xi}_{2,n}(t)} (\tilde u(t,x) - \phi_-) \de x - \int_{-\infty}^{\tilde{\xi}_{2,n}(t)} (\psi(x) - \psi_-) \de x, \\ 
M_{1}(t) &= \int_{-\infty}^{\tilde{\xi}_2(t)} (v(t,x) - \phi_- + \psi_-) \de x = \int_{-\infty}^{\tilde{\xi}_2(t)} (\tilde u(t,x) - \phi_-) \de x - \int_{-\infty}^{\tilde{\xi}_2(t)} (\psi(x) - \psi_-) \de x,
\end{align*}
which are well-defined as $\tilde u(t,\cdot) - \phi_-$ and $\psi - \psi_-$ are $L^1$-integrable on $\R_-$ for all $t \geq 0$. % Since we have $v \in C^1\big((0,\infty),C_{\mathrm{ub}}(\R)\big)$, the differential quotient
%\begin{align*} \frac{v(t+h,x) - v(t,x)}{h}, \end{align*}
%converges uniformly in $x \in \R$ to $v_t(t,x)$ as $h \to 0$. 
Applying the Leibniz' rule, we find
\begin{align*}
M_{1,n}'(s) &= \tilde{\xi}_{2,n}'(s)\left(v(s,\tilde{\xi}_{2,n}(s)) - \phi_- + \psi_-\right)\\ 
&\qquad + \, \int_{-\infty}^{\tilde{\xi}_{2,n}(s)} \left(v_{xx}(s,x) + \partial_x\left(\tilde f(v(s,x) + \psi(x)) - \tilde f(\psi(x))\right)\right) \de x\\ 
% &= \tilde{\xi}_{2,n}'(t)\left(u(t,\xi_n(t)) - \phi_-\right) + \left[u_{x}(t,x) + f(u(t,x))\right]_{-\infty}^{\tilde{\xi}_{2,n}(t)}\\
&= \partial_s\left(\tilde{\xi}_{2,n}(s)\left(v(s,\tilde{\xi}_{2,n}(s)) - \phi_- + \psi_-\right)\right) - \tilde{\xi}_{2,n}(s)\partial_s \left(v(s,\tilde{\xi}_{2,n}(s))\right) + v_x(s,\tilde{\xi}_{2,n}(s)) \\
&\qquad + \, \tilde f\big(v(s,\tilde{\xi}_{2,n}(s)) + \psi(\tilde{\xi}_{2,n}(s))\big) - \tilde f\big(\psi(\tilde{\xi}_{2,n}(s))\big) - \tilde f(\phi_-) + \tilde f(\psi_-),
\end{align*}
for $s \in (0,\tau]$. Integrating the latter from $0$ to $t$ we obtain
\begin{align*}
M_{1,n}(t) &= M_{1,n}(0) + \tilde{\xi}_{2,n}(t)\left(v(t,\tilde{\xi}_{2,n}(t)) - \phi_- + \psi_-\right) - \tilde{\xi}_{2,n}(0)\left(v(0,\tilde{\xi}_{2,n}(0)) - \phi_- + \psi_-\right)\\ 
&\qquad + \, \int_0^t \left(\tilde{\xi}_{2,n}(s)\partial_s \left[ v(s,\tilde{\xi}_{2,n}(s)) \right] + v_x(s,\tilde{\xi}_{2,n}(s)) \right) \de s + \left(\tilde f(\psi_-) - \tilde f(\phi_-)\right) t\\
&\qquad + \, \int_0^t \left(\tilde f\big(v(s,\tilde{\xi}_{2,n}(s)) + \psi(\tilde{\xi}_{2,n}(s))\big) - \tilde f\big(\psi(\tilde{\xi}_{2,n}(s))\big)\right)\de s,
\end{align*}
for $t \in (0,\tau]$. Taking the limit $n \to \infty$, while recalling the regularity~\eqref{regularity3} of $v(t,\cdot)$ and the fact that $v_x(\tilde{\xi}_2(s),s) \leq 0$ for all $s \in [0,T]$, we arrive at
\begin{align*}
M_1(t) &= M_{1}(0) - (\tilde{\xi}_2(t) - \tilde{\xi}_2(0))\left(\phi_- - \psi_-\right) + \int_0^t v_x(s,\tilde{\xi}_2(s)) \de s + \left(\tilde f(\psi_-) - \tilde f(\phi_-)\right) t\\
&\leq M_{1}(0) + (\tilde{\xi}_2(t) - \tilde{\xi}_2(0))(\psi_- - \phi_-) + \left(\tilde f(\psi_-) - \tilde f(\phi_-)\right) t,
\end{align*}
implying
\begin{align*}
\int_{-\infty}^{\tilde{\xi}_2(t)} (\tilde u(t,x) - \phi_-) \de x &\leq \int_{-\infty}^{\tilde{\xi}_2(0)} (u_0(x) - \phi_-) \de x + \int_{\tilde{\xi}_2(0)}^{\tilde{\xi}_2(t)} \left(\psi(x) - \phi_-\right) \de x\\ 
&\qquad +\, \left(\tilde f(\psi_-) - \tilde f(\phi_-)\right) t,
\end{align*}
for $t \in [0,\tau]$. On the other hand, since $\tilde{u}(t,\cdot) - \phi_-$ is nonnegative for all $t \geq 0$ by~\eqref{approxub2}, it holds
\begin{align} \label{lowerboundmass}
\int_{\tilde{\xi}_1(t)}^{\tilde{\xi}_2(t)} \left(\psi(x) - \phi_-\right) \de x \leq \int_{-\infty}^{\tilde{\xi}_2(t)} (\tilde u(t,x) - \phi_-) \de x,
\end{align}
cf.~Figure~\ref{fig:interfaces}. Combining the latter two inequalities, while using $\tilde{\xi}_2(0) < \xi_+$, we obtain
\begin{align*}
\int_{\tilde{\xi}_1(t)}^{\tilde{\xi}_2(0)} \left(\psi(x) - \phi_-\right) \de x &\leq \int_{-\infty}^{\tilde{\xi}_2(0)} (u_0(x) - \phi_-) \de x + \left(\tilde f(\psi_-) - \tilde f(\phi_-)\right) t\\
&\leq \int_{-\infty}^{\xi_+} (u_0(x) - \phi_-) \de x + \left(f(0) - f(\phi_-)\right) t + \left(\kappa - f(0)\right) t + \delta t.
\end{align*}
Inserting $t = \tau$ in the latter, applying~\eqref{approx10}, and recalling~\eqref{upperboundtime}, we arrive at
\begin{align*}
\int_{\tilde{\xi}_1(t)}^{\tilde{\xi}_2(0)} \left(\psi(x) - \phi_-\right) \de x &\leq \left(f(0) - f(\phi_-)\right)(\tau - T) + \left(\kappa - f(0)\right) \tau + \delta \tau < 0.
\end{align*}
yielding $\tilde{\xi}_2(0) \leq \tilde{\xi}_1(\tau) < \tilde{\xi}_2(\tau)$, since we have $\psi(x) - \phi_- \geq \psi_- - \phi_- > 0$ for all $x \in \R$. 

Similarly, we establish
\begin{align*}
\int_{\tilde{\xi}_2(0)}^{\tilde{\xi}_3(t)} \left(\phi_+ - \psi(x)\right) \de x &\leq \int_{\tilde{\xi}_2(0)}^\infty (\phi_+ - u_0(x)) \de x + \left(\tilde f(\psi_+) - \tilde f(\phi_+)\right) t,
\end{align*}
yielding
\begin{align*}
\int_{\tilde{\xi}_2(0)}^{\tilde{\xi}_3(t)} \left(\phi_+ - \psi(x)\right) \de x \leq \left(f(0) - f(\phi_+)\right)(\tau - T) + \left(\kappa - f(0)\right) \tau + \delta \tau < 0,
\end{align*}
and thus, $\tilde{\xi}_2(\tau) < \tilde{\xi}_3(\tau) \leq \tilde{\xi}_2(0)$, which contradicts $\tilde{\xi}_2(0) < \tilde{\xi}_2(\tau)$. Hence, there must exist a $t \in [0,\tau]$ such that $v(t,\cdot)$ has only a single zero. Recalling that the number of zeros is non-increasing, we conclude that $v(\tau)$ has a single zero, which must be $x_0$. Since $v(t,\cdot)$ converges to $\phi_\pm - \psi_\pm$ as $x \to \pm \infty$ and we have $\phi_- < \psi_- < 0 < \psi_+ < \phi_+$, it must hold $\tilde{u}_x(\tau,x_0) - \psi'(x_0) = v_x(\tau,x_0) \geq 0$. On the other hand, using that $\psi$ solves~\eqref{profile2} and $0 = v(\tau,x_0) = \tilde{u}(\tau,x_0) - \psi(x_0)$, while recalling~\eqref{approx5} and~\eqref{approx8}, we infer
\begin{align*}
\varepsilon \geq \tilde{u}_x(\tau,x_0) &\geq \psi'(x_0) = -\tilde f(\psi(x_0)) + \tilde f(\psi_\pm) = -\tilde f(\tilde u(\tau,x_0)) + \kappa\\
&= -\tilde f(\tilde u(\tau,x_0)) + f(\tilde u(\tau,x_0)) - f(\tilde u(\tau,x_0)) + f(0) + \kappa - f(0)\\ 
&\geq -\delta - L\varepsilon + \kappa - f(0).
\end{align*}
Combining the latter with~\eqref{approx10} yields
\begin{align*}
(L+2)\varepsilon \geq \kappa - f(0),
\end{align*}
which contradicts~\eqref{approx6}. We conclude that for each $t > T$, the function $u(t,\cdot)$ possesses at most one zero.
\end{proof}

\begin{remark}
We note that the proof of Theorem~\ref{t:shockopposite} provides an explicit upper bound $T$, given by~\eqref{upperboundtime}, on the time at which all interfaces of the solution $u(t,\cdot)$ of~\eqref{modBurgers} have collapsed to a single interface. The upper bound~\eqref{upperboundtime} only depends on the flux function $f$ and the initial condition $u_0$.     
\end{remark}

\begin{remark}
We expect that it might be possible to lift the assumption that $u_0(x) \in [\min\{\phi_-,\phi_-\},\max\{\phi_-,\phi_+\}]$ for all $x \in \R$ in Theorem~\ref{t:shockopposite} by bounding $u_0(x)$ from below by a smooth function $u_-(x)$ and from above by a smooth function $u_+(x)$ satisfying 
$$
\lim_{x \to \pm \infty} u_-(x) = \min\{\phi_-,\phi_+\} \quad \mbox{\rm and} \quad \lim_{x \to \pm \infty} u_+(x) = \max\{\phi_-,\phi_+\}.
$$ 
It has been established in~\cite{FRSE} that the solutions $\tilde{u}_\pm(t,\cdot)$ of the regularized problem~\eqref{modBurgersApprox} with initial conditions $\tilde u_\pm(0,\cdot) = u_\pm$ converge in $L^1$- and $L^\infty$-norm to their asymptotic limits as $t \to \infty$. So, by the comparison principle, the area of $\tilde{u}(t,\cdot)$ under $\min\{\phi_-,\phi_+\}$ or above $\max\{\phi_-,\phi_+\}$ converges to $0$ as $t \to \infty$. We expect that using similar techniques as in the proof of Theorem~\ref{t:shocksame}, one can obtain decay estimates on this area, which are independent of the approximation $\tilde{f}$ of the flux function $f$. One would then hope to find an explicit time $T_1 > 0$, only depending on $f$ and the initial condition $u_0$, such that for $t > T_1$ this area is so small that the estimate~\eqref{lowerboundmass} is still valid and one can proceed as in the proof of Theorem~\ref{t:shockopposite}. We decided to refrain from providing this exposition, since it merely introduces additional technicalities obscuring the main ideas of the proof.
\end{remark}

\subsection{Solutions with initial data of class II} 

We prove the finite-time extinction of all interfaces of solutions with initial data of class II. That is, we consider a solution $u(t,x)$ of~\eqref{modBurgers} with initial condition $u(0,x) = u_0(x)$, which converges to nonzero asymptotic limits $\phi_\pm$ as $x \to \pm \infty$ that have the same sign. By approximating the solution $u(t,x)$ by a solution $\tilde{u}(t,x)$ to the regularized problem~\eqref{modBurgersApprox} with smooth flux function $\tilde{f}$ and bounding the initial condition $u_0$ from below or above, it suffices by the comparison principle of~\cite{PRWE,SERR} to prove the statement for a solution $\tilde{v}(t,\cdot)$ of the regularized problem~\eqref{modBurgersApprox} which possesses the same non-zero asymptotic limit $\phi_0$ at $\pm \infty$, see Figure~\ref{fig:interfaces2}. We show that all interfaces of $\tilde{v}(t,\cdot)$ go extinct within finite time by deriving an energy inequality for the difference $\tilde{v}(t,\cdot) - \phi_0$. The energy estimate relies on the Gagliardo-Nirenberg inequality and the conservation of mass.

\begin{theorem} \label{t:shocksame}
Let $f \colon \R \to \R$ be locally Lipschitz continuous and $u_0 \in C_{\mathrm{ub}}^1(\R)$. Suppose that $u_0(x)$ converges to nonzero asymptotic limits $\phi_\pm$ as $x \to \pm \infty$ such that $\phi_+$ and $\phi_-$ have the same sign. Let $u \in C\big([0,\infty),C_{\mathrm{ub}}(\R)\big)$ be the global mild solution of~\eqref{modBurgers}, established in Lemma~\ref{p:globalcub}. Then, there exists a time $T > 0$ such that for all $t > T$, the solution $u(t,\cdot)$ possesses no zeros.
\end{theorem}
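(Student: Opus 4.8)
The plan is to reduce, via the comparison principle and the approximation results of Section~\ref{s:wellposedness}, to a uniform decay statement for a solution of the regularized equation~\eqref{modBurgersApprox} sharing a single positive asymptotic limit at both ends, and then to prove that decay by an energy argument. Without loss of generality assume $\phi_-,\phi_+>0$; otherwise replace $u$ by $-u$ and $f(\cdot)$ by $-f(-\cdot)$, which is again locally Lipschitz and preserves zeros. Set $\phi_0=\min\{\phi_-,\phi_+\}$ and $\tilde u_\infty=\tfrac12\phi_0>0$. Since $u_0$ is continuous and converges to $\phi_\pm>\tilde u_\infty$, one can construct a smooth $\tilde u_0\le\min\{u_0,\tilde u_\infty\}$ with $\tilde u_0\equiv\tilde u_\infty$ outside a compact set, so that $\tilde u_0-\tilde u_\infty$ is compactly supported and nonpositive. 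I would then fix a smooth flux $\tilde f$ (by mollification) with $\sup_{[m_0,M_0]}|f-\tilde f|<\delta$ and $\sup_{[m_0,M_0]}|\tilde f'|\le R$, and let $\tilde u,\tilde v$ be the classical solutions of~\eqref{modBurgersApprox} with data $u_0$ and $\tilde u_0$ from Lemma~\ref{p:globalsmooth}. The comparison principle~\cite{PRWE,SERR} yields $\tilde v(t,\cdot)\le\tilde u(t,\cdot)$ and $\tilde v(t,\cdot)\le\tilde u_\infty$ for all $t\ge0$. The goal is to show $\tilde v(t,\cdot)\to\tilde u_\infty$ in $L^\infty$ at a rate \emph{independent of} $\delta$.

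Set $w=\tilde v-\tilde u_\infty\le0$. Since the constant $\tilde u_\infty$ solves the profile equation of Lemma~\ref{p:globalcub1} with $c=0$ and $C=-\tilde f(\tilde u_\infty)$, that lemma keeps $w(t,\cdot)$ in $L^1(\R)$, and integrating~\eqref{modBurgersApprox} over $\R$ (the flux and $w_x$ vanish at $\pm\infty$) shows that $\int_\R w(t,x)\,\de x$ is conserved; as $w\le0$ this gives $\|w(t,\cdot)\|_{L^1}\equiv\|w(0,\cdot)\|_{L^1}=:K$. The central computation is the $L^2$ energy identity
\begin{equation*}
\frac{\de}{\de t}\,\tfrac12\|w(t,\cdot)\|_{L^2}^2=-\|w_x(t,\cdot)\|_{L^2}^2 ,
\end{equation*}
in which the convective contribution $\int_\R w\,\tilde f(w+\tilde u_\infty)_x\,\de x=\int_\R\partial_x G(w)\,\de x$ vanishes, $G$ being a primitive of $s\mapsto s\,\tilde f'(s+\tilde u_\infty)$ and $w\to0$ at $\pm\infty$. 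The Gagliardo–Nirenberg inequality $\|w\|_{L^2}\le C\|w_x\|_{L^2}^{1/3}\|w\|_{L^1}^{2/3}$ converts this into the differential inequality $E'\le-b\,E^{3}$ for $E=\|w\|_{L^2}^2$ with $b=2C^{-6}K^{-4}$, yielding the explicit decay $\|w(t,\cdot)\|_{L^2}\le(2bt)^{-1/4}$.

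To upgrade this to an $L^\infty$-bound, I would exploit that, being conservative, the convective term drops out of the analogous $L^p$-energy estimates for every $p$ as well; Nash's iteration (equivalently, the $L^2\to L^\infty$ smoothing of~\eqref{modBurgersApprox} over unit time steps) then gives heat-type decay $\|w(t,\cdot)\|_{L^\infty}\le C\,t^{-1/2}K$, with $C$ depending only on the one-dimensional Gagliardo–Nirenberg constant and on $R$, and not on finer properties of $\tilde f$. Consequently there is an \emph{explicit} time $t^\ast$, independent of $\delta$, with $\|w(t^\ast,\cdot)\|_{L^\infty}<\tfrac12\tilde u_\infty$, whence $\tilde u(t^\ast,\cdot)\ge\tilde v(t^\ast,\cdot)>\tfrac12\tilde u_\infty$.

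Finally, Lemma~\ref{p:globalcub} supplies $\|u(t^\ast,\cdot)-\tilde u(t^\ast,\cdot)\|_{L^\infty}\le C_0\delta\sqrt{t^\ast}$; since $t^\ast$ was fixed \emph{before} $\delta$, one may now take $\delta$ so small that $C_0\delta\sqrt{t^\ast}<\tfrac14\tilde u_\infty$, which forces $u(t^\ast,x)>\tfrac14\tilde u_\infty>0$ for every $x\in\R$. Comparing the mild solution $u$ of~\eqref{modBurgers} with the constant stationary solution $\tfrac14\tilde u_\infty$ then propagates this lower bound to all later times, so that $u(t,\cdot)\ge\tfrac14\tilde u_\infty>0$ for $t\ge t^\ast=:T$, and hence $u(t,\cdot)$ has no zeros for $t>T$. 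I expect the genuine obstacle to be exactly the requirement that the $L^\infty$-decay rate, and thus $t^\ast$, be independent of $\delta$: this is what legitimizes the ordering ``fix $t^\ast$, then choose $\delta$'' and circumvents the $\sqrt{t}$-degradation of the approximation in Lemma~\ref{p:globalcub}. The conservative structure of the convective term, which removes $\tilde f$ from every energy estimate, is precisely what makes this uniformity available, while the concluding comparison with a constant avoids any appeal to the Sturm theorems for the non-smooth solution $u$.
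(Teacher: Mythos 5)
Your proposal is essentially correct and rests on the same core mechanism as the paper's proof: reduce by comparison to a regularized subsolution with a single constant limit, observe that the conservative convective term drops out of the $L^2$-energy identity so that the resulting decay is independent of the regularization $\tilde f$, feed in mass conservation through Gagliardo--Nirenberg, and only then choose $\delta$ after the relevant time has been fixed, so that the $\sqrt{t}$-growth in Lemma~\ref{p:globalcub} is harmless. Where you genuinely diverge is in how the $L^\infty$-information is extracted: the paper argues by contradiction, using the Sturm theorem~\cite[Theorem~B]{ANG} to guarantee that $\|\tilde v(t,\cdot)-\tfrac23\phi_-\|_\infty\geq\tfrac13\phi_-$ persists on $[0,\tau]$ as long as two level-crossings survive, which forces the energy to become negative; you instead derive a direct quantitative decay $\|w(t)\|_{L^2}\lesssim t^{-1/4}$ and then upgrade to $\|w(t)\|_{L^\infty}\lesssim t^{-1/2}$. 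Your route avoids the Sturm theorem entirely at the price of the $L^2\to L^\infty$ upgrade, which you assert via Nash iteration but do not carry out. That step is correct and can in fact be done with only the paper's own tools: integrating the energy identity over $[t/2,t]$ produces a time $s\in[t/2,t]$ with $\|w_x(s,\cdot)\|_2^2\lesssim t^{-3/2}$, the inequality~\eqref{gagliardo} then bounds $\|w(s,\cdot)\|_\infty\lesssim t^{-1/2}$, and the comparison principle for the smooth regularized equation makes $t\mapsto\|w(t,\cdot)\|_\infty=\tilde u_\infty-\inf\tilde v(t,\cdot)$ nonincreasing, so the bound propagates to time $t$. Spelling this out would close that gap cleanly.

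The one step that does not go through as written is the final forward propagation. Having shown $u(t^\ast,\cdot)>\tfrac14\tilde u_\infty$, you invoke a comparison of the mild solution $u$ of the \emph{non-smooth} equation with the constant $\tfrac14\tilde u_\infty$ restarted at $t^\ast$. The paper's comparison statement for non-smooth fluxes is Lemma~\ref{p:globalcub}, which compares against $\inf u_0$ and $\sup u_0$ for a $C^1_{\mathrm{ub}}$ initial datum; restarting it at $t^\ast$ would require $u(t^\ast,\cdot)\in C^1_{\mathrm{ub}}(\R)$, which is not available for the class-II mild solution furnished by Lemma~\ref{p:globalcub}, and you cannot instead use the approximation bound uniformly in $t$ because the error $C_0\delta\sqrt{t}$ grows. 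The repair is immediate and is exactly the paper's quantifier structure: your decay estimate gives $\tilde u(\tau,\cdot)\geq\tilde v(\tau,\cdot)>\tfrac12\tilde u_\infty$ for \emph{every} $\tau\geq t^\ast$ and every admissible $\tilde f$, so for each such $\tau$ you may choose $\delta=\delta(\tau)$ with $C_0\delta\sqrt{\tau}<\tfrac14\tilde u_\infty$ and conclude $u(\tau,\cdot)>\tfrac14\tilde u_\infty$ directly, with no propagation needed. With these two repairs your argument is a valid, and genuinely alternative, proof.
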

\begin{proof}
Throughout the proof, $C>0$ denotes the constant appearing in the Gagliardo-Nirenberg interpolation inequality
\begin{align} \label{gagliardo}
 \|g\|_\infty \leq C\|g'\|_2^{\frac{2}{3}}\|g\|_1^{\frac{1}{3}},
\end{align}
which holds for all $g \in L^1(\R) \cap H^1(\R)$.

We consider the case $0 < \phi_- \leq \phi_+$. The cases $0 < \phi_+ \leq \phi_-$, $\phi_- \leq \phi_+ < 0$ and $\phi_+ \leq \phi_- < 0$ are handled analogously. Take any $v_0 \in C_{\mathrm{ub}}^1(\R)$ such that $v_0 - \tfrac{2}{3}\phi_-$ is $L^1$-integrable, not identically zero, and nonpositive and it holds $v_0(x) \leq u_0(x)$ for all $x \in \R$.  Set
\begin{align} \label{upperboundtime2}
T = \frac{2\phi_-^3}{9C^3\left\|v_0 - \tfrac{2}{3}\phi_-\right\|_2^2 \left\|v_0 - \tfrac{2}{3}\phi_-\right\|_1} > 0.
\end{align}

Let $\tau > T$. By Lemma~\ref{p:globalcub} there exists $\tilde{f} \in \C^\infty(\R)$ such that the global classical solution $\tilde{u}(t,\cdot)$ of the regularized problem~\eqref{modBurgersApprox} with initial condition $\tilde{u}(0,\cdot) = u_0$ satisfies~\eqref{regularity2} and
\begin{align}
\|u(\tau,\cdot) - \tilde{u}(\tau,\cdot)\|_\infty < \tfrac{1}{3}\phi_-. \label{approx00}
\end{align}
Let 
\begin{align*}
\tilde{v} \in C\big([0,\infty),C_{\mathrm{ub}}^1(\R)\big) \cap C\big((0,\infty),C_{\mathrm{ub}}^2(\R)\big) \cap C^1\big((0,\infty),C_{\mathrm{ub}}(\R)\big),
\end{align*}
be the solution of~\eqref{modBurgersApprox} with initial condition $\tilde{v}(0,\cdot) = v_0$, cf.~Lemma~\ref{p:globalsmooth}. By the comparison principle, cf.~\cite{PRWE,SERR}, it holds 
\begin{align} \label{comparison} 
\tilde{v}(t,x) \leq \tilde{u}(t,x), \qquad \tilde{v}(t,x) \leq \tfrac{2}{3}\phi_-,
\end{align}
for all $t \geq 0$ and $x \in \R$. Our aim is to show that we have $\tilde{v}(\tau,x) \geq \tfrac{1}{3}\phi_-$ for all $x \in \R$, which together with~\eqref{approx00} and~\eqref{comparison} yields the desired result that $u(\tau,\cdot)$ does not posses any zeros, cf.~Figure~\ref{fig:interfaces2}.

\begin{figure}[htb]
	\centering
 \begin{subfigure}{0.49\textwidth}
    \includegraphics[trim = 1 1 1 1, clip, scale=0.6]{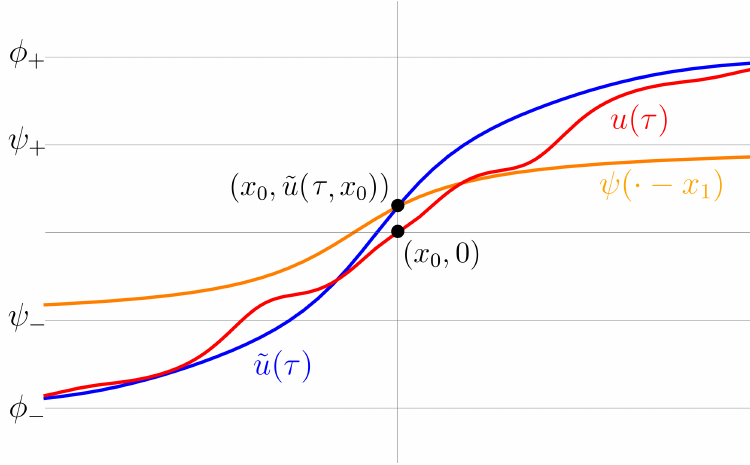}
\end{subfigure}
    \hfill
\begin{subfigure}{0.49\textwidth}
    \includegraphics[trim = 1 1 1 1, clip, scale=0.6]{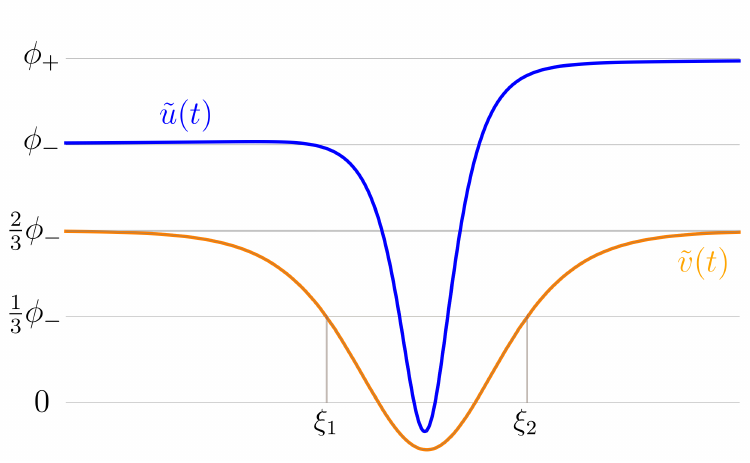}
\end{subfigure}
	\caption{Left: the shock wave $u(t,\cdot)$ and its approximation $\tilde{u}(t,\cdot)$ from the proof of Theorem~\ref{t:shockopposite} at time $t = \tau$. The shock wave $u(\tau,\cdot)$ possesses the asymptotic limits $\phi_\pm$ at $\pm \infty$ and has an interface at $x_0$. The translate $\psi(\cdot - x_1)$ of the traveling shock wave $\psi$, connecting the asymptotic states $\psi_\pm$, passes through the point $(x_0,\tilde{u}(\tau,x_0))$. Right: the approximate solution $\tilde{u}(t,\cdot)$ connecting the asymptotic end states $\phi_\pm$ and its subsolution $\tilde{v}(t,\cdot)$ possessing the asymptotic limit $\tfrac23 \phi_-$ at $\pm \infty$. In the proof of Theorem~\ref{t:shocksame} we approximate the energy of $\tilde{v}(t,\cdot) - \tfrac23 \phi_-$ at a point $t = \tau$ from below by $(\xi_2 - \xi_1) \tfrac{1}{9} \phi_-^2$.}
	\label{fig:interfaces2}
\end{figure}

We argue by contradiction and assume that there exist $\xi_1, \xi_2 \in \R$ with $\xi_1 < \xi_2$ such that $\tilde{v}(\tau,\xi_1) = \tfrac{1}{3}\phi_- = \tilde{v}(\tau,\xi_2)$. First, we observe that $(\tau,\xi_1)$ is a root of $z(t,x) = \tilde{v}(t,x) - \tfrac{1}{3}\phi_-$, which satisfies the linear equation
\begin{align} \label{e:linearized}
z_t = z_{xx} + b(t,x) z_x,
\end{align}
where the spatial and temporal derivative of $b(t,x) = \tilde f'(\tilde{v}(t,x))$ are bounded on the strip $\R \times [0,s]$ for any $s > 0$. Applying the Sturm Theorem~\cite[Theorem~B]{ANG} to~\eqref{e:linearized} yields that $z(t,\cdot)$ must have a zero for all $t \in [0,\tau]$. That is, it holds
\begin{align} \label{linftybound}
\left\|\tilde{v}(t,\cdot) - \tfrac23 \phi_- \right\|_\infty \geq \tfrac{1}{3} \phi_-,
\end{align}
for all $t \in [0,\tau]$.

Next, we observe that the mass
\begin{align*} M(t) = \int_{\R} \left(\tilde{v}(t,x) - \tfrac{2}{3}\phi_-\right) \de x, \qquad t \geq 0,\end{align*}
is conserved. Indeed, it holds
\begin{align*}
M'(t) = \int_{\R} \left(\tilde{v}_{xx}(t,x) + \partial_x \big(\tilde{f}(\tilde{v}(t,x))\big)\right) \de x % = \left[\tilde{v}_x(t,x) + \tilde f(\tilde{v}(t,x))\right]_{x = -\infty}^{x=\infty} 
= 0,
\end{align*}
and thus, we have $M(t) = M(0)$ for all $t \geq 0$. Second, we establish an estimate for the energy 
\begin{align*}E(t) = \left\|\tilde{v}(t,\cdot) - \tfrac{2}{3}\phi_-\right\|_2^2.\end{align*} We compute using integration by parts
\begin{align*}
E'(t) &= 2 \int_{\R} \left(\tilde{v}(t,x) - \tfrac{2}{3}\phi_-\right)\left(\tilde{v}_{xx}(t,x) + \partial_x \big(\tilde{f}(\tilde{v}(t,x))\big)\right) \de x \\
&= -2 \int_{\R} \tilde{v}_x(t,x)\left(\tilde{v}_{x}(t,x) + \tilde{f}(\tilde{v}(t,x))\right) \de x \\
&= -2\|\tilde{v}_x(t,\cdot)\|_2^2,
\end{align*}
for $t \geq 0$. Therefore, using the Gagliardo-Nirenberg inequality~\eqref{gagliardo}, the bound~\eqref{comparison} and the fact that the (nonzero) mass $M(t)$ is conserved, we obtain the energy estimate
\begin{align*}
E'(t) \leq -\frac{2}{C^3 |M(t)|} \left\|\tilde{v}(t,\cdot) - \tfrac{2}{3}\phi_-\right\|_\infty^3 = -\frac{2}{C^3 |M(0)|} \left\|\tilde{v}(t,\cdot) - \tfrac{2}{3}\phi_-\right\|_\infty^3,
\end{align*}
for $t \geq 0$. Integrating the latter from $0$ to $\tau$, while using~\eqref{linftybound} and $\tau > T$, we obtain
\begin{align*} (\xi_2 - \xi_1)\frac{\phi_-^2}{9} &\leq E(\tau) \leq E(0)-\frac{2}{C^3 |M(0)|} \int_0^\tau \left\|\tilde{v}(t,\cdot) - \tfrac{2}{3}\phi_-\right\|_\infty^3 \de t\\ 
&\leq E(0)-\frac{2 \phi_-^3}{9C^3  |M(0)|} \tau = E(0)\left(1-\frac{\tau}{T}\right) < 0,
\end{align*}
which contradicts $\xi_1 < \xi_2$, see Figure~\ref{fig:interfaces2}. Therefore, $\tilde{v}(\tau,\cdot) - \tfrac13 \phi_-$ can possess at most one single zero, which together with estimates~\eqref{approx00} and~\eqref{comparison} and the fact that $\tilde{v}(\tau,x)$ converges to $\tfrac23 \phi_-$ as $x \to \pm \infty$, implies that $u(\tau,\cdot)$ cannot have any zeros. \end{proof}

\begin{remark} \label{explicittime}
Assume that the initial condition $u_0$ in Theorem~\ref{t:shocksame} possesses an interface and it holds $0 < \phi_- \leq \phi_+$. By mollifying the compactly supported, nonpositive, nonzero function $u_1(x) = \min\{\tfrac23 \phi_-,u_0(x)\} - \tfrac23 \phi_-$, one readily finds a sequence $\{z_n\}_n$ of nonpositive, nonzero, smooth, and compactly supported functions such that $z_n$ converges in $L^p(\R)$ to $u_1$ as $n \to \infty$ for $p = 1,2$. Thus, $w_n = z_n + \tfrac23 \phi_-$ is a smooth function such that $w_n - \tfrac23 \phi_-$ is $L^1$-integrable, not identically zero, and nonpositive such that $w_n(x) \leq u_0(x)$ for all $n \in \mathbb N$. Hence, $w_n$ satisfies the criteria for the function $v_0$ in the proof of Theorem~\ref{t:shocksame} for any $n \in \mathbb N$. That is, we find that the upper bound~\eqref{upperboundtime2} on the time at which all interfaces of the solution $u(t,\cdot)$ have gone extinct, could be taken equal to
\begin{align*}
T = \frac{2\phi_-^3}{9C^3 \|u_1\|_2^2 \|u_1\|_1}.
\end{align*}
We stress that $T$ only depends on the initial condition $u_0$ of the solution $u(t,\cdot)$ and the positive constant $C$ from the Gagliardo-Nirenberg inequality~\eqref{gagliardo}.
\end{remark}

\subsection{Solutions with initial data of class III} \label{s:antishocks}

In Theorem~\ref{t:shockopposite}, we proved finite-time coalescence of interfaces for shock waves converging to asymptotic limits of opposite signs. This prompts the question of whether anti-shock waves converging to asymptotic limits of opposite signs also exhibit finite-time coalescence of interfaces. One readily observes that the proof of Theorem~\ref{t:shockopposite} strongly relies on the Gel'fand-Oleinik entropy inequality~\eqref{Gelfand} to bound the mass. It cannot be expected that the same strategy applies to the case of anti-shock waves  that violate~\eqref{Gelfand}. Therefore, the question of whether finite-time coalescence of interfaces can be established for solutions with initial data of class III remains open. 

Nevertheless, we can study the interface dynamics of solutions with initial data of class III in the framework of the modular Burgers' equation
\begin{equation}
\label{Burgers-modular-sec3}
u_t = u_{xx} + \lvert u \rvert_x,
\end{equation}
which corresponds to the scalar viscous conservation law~\eqref{modBurgers} with the modular flux function $f(u) = |u|$. Our upcoming analysis establishes finite-time coalescence of interfaces for anti-shock waves converging to asymptotic limits $\mp \phi_*$ as $x \to \pm \infty$ with $\phi_* > 0$. We make the following assumption on the regularity of solutions 
to the modular Burgers' equation~(\ref{Burgers-modular-sec3}).

\begin{assumption}
	\label{assumption-modular}
For every $u_0 \in C_{\mathrm{ub}}^1(\R)$ converging to nonzero asymptotic limits $u_\pm$ at $\pm \infty$, the global mild solution $u \in C([0,\infty),C_{\mathrm{ub}}(\R))$ of~\eqref{Burgers-modular-sec3}, established in Lemma~\ref{p:globalcub}, with initial condition $u(0,\cdot) = u_0$ satisfies $u \in C^1((0,\infty) \times \R,\R)$ 
such that $u(t,\cdot)$, $t \geq 0$ is piecewise $C^2$ with the finite jump condition 
\begin{equation}
\label{interface-con}
u_{xx}(t,\xi(t)^+) - u_{xx}(t,\xi(t)^-) = -2 |u_x(t,\xi(t))|,
\end{equation}
across any interface $x = \xi(t) \in \mathbb{R}$.
\end{assumption}

Assumption~\ref{assumption-modular} was proven in~\cite{PELI} for the class of solutions to~(\ref{Burgers-modular-sec3}) with a single interface in a local neighborhood of a traveling shock wave. In a more general setting, the validity of Assumption~\ref{assumption-modular} is an open question. 

%Our analysis is formal in the sense that we differentiate solutions $u(t,\cdot)$ of~\eqref{Burgers-modular-sec3} with respect to time, which are a priori only once continuously differentiable with respect to space as explained in~\S\ref{s:wellposedness}, see also~\cite{PELI}. 

We expect that Assumption~\ref{assumption-modular} can be proven in a general case by using approximation by solutions of the regularized equation as in Theorems~\ref{t:shockopposite} and~\ref{t:shocksame}. However, since our main goal is to illustrate the finite-time coalescence of interfaces of solutions of~\eqref{modBurgers} with initial data of class III rather than proving a general well-posedness result for piecewise smooth flux functions, we refrain from doing so. 

The following lemma establishes that the odd parity of initial data is preserved in the time evolution of the modular Burgers' equation~(\ref{Burgers-modular-sec3}).

\begin{lemma}
	\label{lemma-initial}
Let $u_0 \in C^1_{\rm ub}(\mathbb{R})$ satisfy $u_0(-x) = -u_0(x)$ for every $x \in \mathbb{R}$. Then, the mild solution $u \in C([0,\infty),C_{\rm ub}(\mathbb{R}))$ of~(\ref{Burgers-modular-sec3}), established in Lemma~\ref{p:globalcub}, satisfies $u(t,-x) = -u(t,x)$ for every $t \geq 0$ and $x \in \mathbb{R}$.
\end{lemma}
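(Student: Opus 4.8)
The plan is to exploit the reflection symmetry of the equation together with the uniqueness of mild solutions guaranteed by Lemma~\ref{p:globalcub}. I would introduce the reflected-and-negated function $w(t,x) := -u(t,-x)$ and argue that $w$ is again a mild solution of~\eqref{Burgers-modular-sec3} carrying the \emph{same} initial datum $u_0$. Once this is shown, the uniqueness assertion of Lemma~\ref{p:globalcub} immediately forces $w \equiv u$, which is precisely the claim $u(t,-x) = -u(t,x)$.

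To verify that $w$ solves the mild equation~\eqref{mildform2} with flux $f(u)=|u|$, I would avoid differentiating the equation directly---this is exactly what one cannot do, since $|\cdot|$ is only Lipschitz and $u$ need not be twice differentiable---and instead work entirely at the level of the integral formulation. Writing $(Pg)(x) := g(-x)$ for the spatial reflection, so that $w(t,\cdot) = -P\,u(t,\cdot)$, the argument rests on three elementary identities: (i) $P\re^{\partial_x^2 t} = \re^{\partial_x^2 t}P$, since the Gaussian heat kernel is even; (ii) $P\partial_x = -\partial_x P$; and (iii) the evenness of the modular flux, which gives $P\,|u(s,\cdot)| = |w(s,\cdot)|$ because $|u(s,-x)| = |{-u(s,-x)}| = |w(s,x)|$. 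Applying $-P$ to~\eqref{mildform2} for $u$, identity (i) combined with the oddness $Pu_0 = -u_0$ turns the linear term into $\re^{\partial_x^2 t}u_0$, while in the Duhamel term the sign produced by (ii) cancels the sign coming from $-P$ and (iii) replaces $|u|$ by $|w|$. Collecting the terms yields exactly
\begin{align*}
w(t,\cdot) = \re^{\partial_x^2 t} u_0 + \int_0^t \partial_x \re^{\partial_x^2(t-s)} |w(s,\cdot)| \,\de s,
\end{align*}
which is~\eqref{mildform2} for $w$ with initial condition $w(0,\cdot)=u_0$.

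It remains only to check that $w$ belongs to the solution class of Lemma~\ref{p:globalcub} before invoking uniqueness. This is routine: $P$ is a bounded involution on $C_{\mathrm{ub}}(\R)$, so $w = -P\,u \in C([0,\infty),C_{\mathrm{ub}}(\R))$ inherits the required regularity and boundedness, and $w(0,\cdot) = -P u_0 = u_0$ by oddness of $u_0$; likewise the commutation manipulations with $P$ are legitimate on $C_{\mathrm{ub}}(\R)$ since $P$ is bounded and interacts with the analytic semigroup and its generator as in (i)--(ii). The only genuinely delicate aspect is the nonsmoothness of the flux, and the argument is designed to sidestep it entirely: all of the symmetry bookkeeping is performed on the mild equation, and the single structural fact driving the cancellation is that $|\cdot|$ is even, which is what makes step (iii) available. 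I therefore expect no substantive obstacle beyond carefully tracking the signs in identity (ii).
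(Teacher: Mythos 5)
Your proposal is correct, but it closes the argument differently from the paper. You verify that $w(t,\cdot)=-u(t,-\cdot)$ is itself a mild solution of~\eqref{Burgers-modular-sec3} with initial datum $u_0$ (using that the heat kernel is even, that $\partial_x$ anticommutes with the reflection $P$, and that $|\cdot|$ is even) and then invoke the uniqueness assertion of Lemma~\ref{p:globalcub}; note that $w$ does lie in the uniqueness class, since $u_0$ odd gives $m_0=-M_0$ and hence $m_0\le w\le M_0$. The paper instead applies the same three symmetry identities to the Duhamel formula to derive the integral identity
\begin{align*}
u(t,\cdot) + u(t,-\,\cdot) = \partial_x \int_0^t \re^{\partial_x^2(t-s)} \big(|u(s,\cdot)| - |u(s,-\,\cdot)|\big)\, \de s,
\end{align*}
bounds the integrand via the reverse triangle inequality $\big||a|-|b|\big|\le|a+b|$ and the smoothing estimate $\|\partial_x \re^{\partial_x^2 t}v\|_{L^\infty}\le Ct^{-1/2}\|v\|_{L^\infty}$, and concludes $\|u(t,\cdot)+u(t,-\,\cdot)\|_{L^\infty}=0$ by the singular Gr\"onwall inequality. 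The two routes are close in substance---uniqueness of mild solutions is itself proved by exactly such a Gr\"onwall estimate---but yours delegates that estimate to the already-established uniqueness statement, which makes the proof shorter and emphasizes the structural point that the equation is equivariant under $u\mapsto -Pu$, while the paper's version is self-contained and avoids having to check membership in the uniqueness class. Either is acceptable.
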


\begin{proof}
First, observe that, if $z \in C_{\mathrm{ub}}(\R)$ is odd, then
\begin{align*}
\re^{\partial_x^2 t} z = \int_{\R} \frac{\re^{-\frac{y^2}{4t}}}{\sqrt{4\pi t}} z(x-y) \de y
\end{align*}
is also odd, which follows by the substitution $y \mapsto -y$. Now the mild solution
$u(t,\cdot)$ of~\eqref{Burgers-modular-sec3} is given by
\begin{align*}
u(t,\cdot) = \re^{\partial_x^2 t} u_0 + \partial_x \int_0^t  \re^{\partial_x^2(t-s)} \left|u(s,\cdot)\right| \de s,
\end{align*}
for $t \geq 0$. Since $u_0$ is odd, so is $\re^{\partial_x^2 t} u_0$. Hence, using again the substitution $y \mapsto y$, we obtain 
\begin{align*}
u(t,\cdot) + u(t,-\,\cdot) &= \partial_x \int_0^t \re^{\partial_x^2(t-s)} \left(\left|u(s,\cdot)\right| - \left|u(s,-\,\cdot)\right|\right) \de s,
\end{align*}
for $t \geq 0$. Taking norms in the latter and recalling the well-known fact that there exists a constant $C > 0$ such that
$$
\big\| \partial_x e^{\partial_x^2 t} v \big\|_{L^\infty} \leq C t^{-1/2} \| v \|_{L^\infty}
$$
for $t > 0$ and $v \in L^\infty(\R)$, yields
\begin{align*}
\|u(t,\cdot) + u(t,-\,\cdot)\|_{L^\infty} &\leq C \int_0^t \frac{1}{\sqrt{t-s}} \||u(s,\cdot)| - |u(s,-\,\cdot)|\|_{L^\infty} \de s \\
&\leq C \int_0^t \frac{1}{\sqrt{t-s}} \|u(s,\cdot) + u(s,-\,\cdot)\|_{L^\infty} \de s,
\end{align*}
for $t \geq 0$. Therefore, Gr\"onwall's inequality, cf.~\cite[Lemma~7.0.3]{LUN}, implies that 
$$
\|u(t,\cdot) + u(t,-\,\cdot)\|_{L^\infty} = 0
$$ 
for all $t \geq 0$, which finishes the proof.
\end{proof}

The main result of this section is the following theorem. 

\begin{theorem}
Suppose Assumption~\ref{assumption-modular} holds. Take $\phi_* > 0$ and $x_{0,1} \in \R$ with $0 < x_1 - x_0 < \frac16$. Let $\phi \colon \R \to \R$ be the odd function given by 
\begin{align*}
\phi(x) = \phi_*(\re^{-x} - 1), 
\end{align*}
for $x \geq 0$. Consider $u_0 \in C_{\rm ub}^1(\R)$ satisfying
\begin{align} \label{compphi}
\phi(x-x_0) \leq u_0(x) \leq \phi(x-x_1),
\end{align}
for all $x \in \R$. Let $u \in C([0,\infty),C_{\mathrm{ub}}(\R))$ be the mild solution of~\eqref{Burgers-modular-sec3} established in Lemma~\ref{p:globalcub}. Then, $u(t,\cdot)$ cannot posses two consecutive simple zeros $\xi_1(t),\xi_2(t)$ that exist for all $t \geq 0$. 
\end{theorem}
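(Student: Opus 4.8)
The plan is to argue by contradiction: assume there are continuous functions $\xi_1(t)<\xi_2(t)$ that are consecutive simple zeros of $u(t,\cdot)$ for all $t\ge 0$. The reflection $u\mapsto -u(\cdot,-\cdot)$ maps solutions of~\eqref{Burgers-modular-sec3} to solutions and, since $\phi$ is odd, carries the sandwich~\eqref{compphi} to another such sandwich (with centers $-x_1<-x_0$ of the same width), so I may assume without loss of generality that $u>0$ on $(\xi_1,\xi_2)$. The first step is to confine all zeros to the short interval $[x_0,x_1]$. Let $U_*$ be the mild solution of~\eqref{Burgers-modular-sec3} with the odd, strictly decreasing initial datum $\phi$; by Lemma~\ref{lemma-initial} the solution $U_*(t,\cdot)$ stays odd, and by the Sturmian argument used in the proof of Theorem~\ref{t:shockopposite} (smooth approximation of the flux together with~\cite[Theorem~B]{ANG}) the number of its zeros is non-increasing. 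Hence $U_*(t,\cdot)$ keeps its single zero at the origin for all $t$, with $U_*>0$ on $(-\infty,0)$ and $U_*<0$ on $(0,\infty)$. Applying the comparison principle of~\cite{PRWE,SERR} to~\eqref{compphi} and translation invariance gives $U_*(t,\cdot-x_0)\le u(t,\cdot)\le U_*(t,\cdot-x_1)$, so $u>0$ for $x<x_0$ and $u<0$ for $x>x_1$. Consequently every zero of $u(t,\cdot)$, in particular $\xi_1(t)$ and $\xi_2(t)$, lies in $[x_0,x_1]$, and the bump width obeys $\ell(t):=\xi_2(t)-\xi_1(t)\le x_1-x_0<\tfrac16$ for all $t\ge 0$.

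Next I would extract the basic monotonicity of the positive bump. On the open set $\{u>0\}$ the modular equation reduces to the linear problem $u_t=u_{xx}+u_x=(u_x+u)_x$, so $u$ is smooth there, and the simple zeros $\xi_{1,2}(t)$ are $C^1$ in $t$ by the implicit function theorem (using $u_x\neq 0$ and the $C^1$ regularity granted by Assumption~\ref{assumption-modular}). Differentiating $m(t)=\int_{\xi_1(t)}^{\xi_2(t)}u\,\de x$ and $E(t)=\int_{\xi_1(t)}^{\xi_2(t)}u^2\,\de x$, the boundary terms vanish since $u=0$ at $\xi_{1,2}$, and integration by parts yields $m'(t)=u_x(t,\xi_2)-u_x(t,\xi_1)<0$ and $E'(t)=-2\int_{\xi_1}^{\xi_2}u_x^2\,\de x$. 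The Poincaré inequality on an interval of length $\ell<\tfrac16$ then gives $E'(t)\le -2(\pi/\ell)^2E(t)<-72\pi^2 E(t)$, so the bump's energy, and with it $m(t)\le \sqrt{\ell}\,\sqrt{E(t)}$, decays at a rate controlled by the smallness hypothesis $x_1-x_0<\tfrac16$.

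To close the argument I must show that such fast decay is incompatible with two distinct simple zeros persisting for all time, which forces me to track the free boundaries themselves. Using $u(t,\xi_i(t))=0$, the velocities are $\dot\xi_i=-u_t(t,\xi_i)/u_x(t,\xi_i)$, and the one-sided second derivatives entering $u_t(t,\xi_i^\mp)$ are linked across each interface by the jump relation~\eqref{interface-con}. Coupling the decay of $m$ and $E$ on the positive bump with the analogous monotone masses of the two flanking sign-definite regions (which solve the conjugate equation $u_t=u_{xx}-u_x$), together with the conserved relative mass $\int_{\R}\bigl(u(t,\cdot)-\phi(\cdot-x_0)\bigr)\,\de x$, I expect to produce a differential inequality for $\ell(t)$ that drives $\ell(t)\to 0$ at some finite time $T_*$. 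At $t=T_*$ the interfaces $\xi_1,\xi_2$ collide into a multiple zero, contradicting the assumption that they remain distinct simple zeros for all $t\ge 0$.

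The main obstacle is precisely this last step. The energy and mass estimates by themselves only give \emph{exponential}, hence infinite-time, flattening of the bump's amplitude: a positive solution of $u_t=u_{xx}+u_x$ with Dirichlet data on a fixed interval never vanishes, so no contradiction can come from amplitude decay alone. One must instead establish \emph{finite-time collision of the interfaces}, i.e.\ that the \emph{width} $\ell(t)$, not merely the height, collapses. The delicate point is to quantify how the flanking negative regions pull $\xi_1$ and $\xi_2$ together and to show, using the large Poincaré constant furnished by $x_1-x_0<\tfrac16$ together with the unit exponential rate of the tails of $\phi(x)=\phi_*(\re^{-x}-1)$, that the separation reaches zero in finite rather than infinite time. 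Making the coupled system of mass inequalities closed and sign-definite, so that $\ell(t)$ is genuinely driven to zero, is where the hypothesis $x_1-x_0<\tfrac16$ must enter decisively.
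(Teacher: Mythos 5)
Your setup is sound as far as it goes: the confinement of all zeros to $[x_0,x_1]$ by comparison with translates of the odd solution emanating from $\phi$ is exactly the first step of the paper's argument, and your sign bookkeeping for $\phi(\cdot-x_0)\le u_0\le\phi(\cdot-x_1)$ is correct. But the proof has a genuine gap at the decisive step, and you say so yourself: you never derive the contradiction. The local energy/Poincar\'e route on the positive bump only yields exponential decay of $E(t)$ and $m(t)$ on a moving interval of length at most $x_1-x_0$, which, as you acknowledge, is compatible with two simple zeros persisting for all time; and the hoped-for ``differential inequality for $\ell(t)$ that drives $\ell(t)\to 0$ at some finite time'' is never produced. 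In fact the paper does not prove finite-time collapse of the width by a closed ODE for $\ell(t)$ at all; it obtains a softer \emph{ordering} contradiction. Assuming three persistent zeros $\xi_1(t)<\xi_2(t)<\xi_3(t)$, it tracks the \emph{global} half-line masses $M_1(t)=\int_{-\infty}^{\xi_2(t)}(\phi_*-u)\,\de x$ and $M_2(t)=\int_{\xi_2(t)}^{\infty}(u+\phi_*)\,\de x$ anchored at the middle interface, uses Leibniz' rule and $u_x(t,\xi_2(t))>0$ to get $M_2(t)\le M_2(0)+\phi_*t-(\xi_2(t)-\xi_2(0))\phi_*$, bounds $M_2(t)$ from below by $(\xi_3(t)-\xi_2(t))\phi_*+\int_{x_1}^\infty(u_-+\phi_*)\,\de x$, and bounds $M_2(0)\le\phi_*(2x_1+1)$ from above. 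The quantitative input that closes the argument is the \emph{explicit} Green's-function formula for the odd reference solution of the half-line problem $u_t=u_{xx}-u_x$ with Dirichlet data at the symmetry point, which gives $F(t):=\int_{x_1}^\infty(u_-/\phi_*+1)\,\de x-t\to\tfrac32-x_1$ as $t\to\infty$; combining the three bounds yields $\xi_3(t)-\xi_2(0)\le 2x_1+1-F(t)\to 3x_1-\tfrac12<0$ precisely because $x_1-x_0<\tfrac16$, so $\xi_3(t)\le\xi_2(0)$ for large $t$, and symmetrically $\xi_1(t)\ge\xi_2(0)$, contradicting $\xi_1<\xi_2<\xi_3$.

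Two further points. First, your approach measures only the mass and energy of the bump between $\xi_1$ and $\xi_2$; this discards exactly the information that makes the argument work, namely the growth of the tail mass of the sub- and supersolutions on $[x_1,\infty)$ and $(-\infty,x_0]$, which is where the cancellation $2x_1+1-F(t)\to 3x_1-\tfrac12$ and hence the threshold $\tfrac16$ come from. There is no obvious way to recover this from a Poincar\'e inequality on the bump, since the relevant competition is between the drift term $\phi_* t$ in the upper bound for $M_2(t)$ and the term $t+F(t)$ in the lower bound, both of which are global in space. Second, a smaller issue: the cancellation of the boundary terms in your computation of $m'(t)$ and $E'(t)$ uses the smoothness of $u$ on $\{u>0\}$ and the jump condition~\eqref{interface-con} only implicitly; this part is repairable under Assumption~\ref{assumption-modular}, but it is not where the difficulty lies. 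To complete the proof you would need to replace the bump-localized estimates by the global mass balances above and carry out (or otherwise substitute for) the explicit evaluation of $\int_{x_1}^\infty(u_-(t,x)+\phi_*)\,\de x$.
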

\begin{proof}
Our analysis relies on comparison with an explicit reference solution $u_{\rm ref}(t,x)$ of~\eqref{Burgers-modular-sec3} with odd initial condition $u_{\rm ref}(0,\cdot) = \phi \in C_{\rm ub}^1(\R)$. By Lemma~\ref{lemma-initial}, the solution $u_{\rm ref} \in C([0,\infty),C_{\rm ub}(\mathbb{R}))$ is spatially odd. It satisfies the following diffusion-advection boundary-value problem:
\begin{align*}
\left\{ \begin{array}{lll} u_t = u_{xx} - u_x, & t > 0, & x > 0,\\ 
u(t,0) = 0, & t \geq 0, & \\
u(0,x) = \phi(x), & & x \geq 0, \end{array} \right.
\end{align*}
whose solution is explicitly given by
\begin{align*}
u_{\rm ref}(t,x) = \int_0^\infty G(t,x,y) \phi(y) \de y,
\end{align*}
for $t \geq 0$ and $x \in \R$, where $G(t,x,y)$ is the Green's function used in~\cite{PELI}:
\begin{align*}
G(t,x,y) = \frac{1}{\sqrt{4\pi t}} \left(\re^{-\frac{(x-y -t)^2}{4t}} - \re^{-y} \re^{-\frac{(x+y-t)^2}{4t}}\right).
\end{align*}
Evaluating the integral we find
\begin{align*}
 u_{\rm ref}(t,x) &= \frac{\phi_*}{2} \left(\re^x \text{erfc}\left(\frac{t+x}{2 \sqrt{t}}\right)-\text{erfc}\left(\frac{t-x}{2 \sqrt{t}}\right)\right.\\
 &\qquad \qquad \qquad \left. - \re^{2 t-x} \left(\text{erfc}\left(\frac{x-3 t}{2 \sqrt{t}}\right)+\re^{3 x} \text{erfc}\left(\frac{3 t+x}{2\sqrt{t}}\right)-2\right)\right),
\end{align*}
for $t \geq 0$ and $x \in \R$.

By the comparison principle, cf.~\cite[Corollary~3.1]{ENDJ}, and~\eqref{compphi} it holds
\begin{align*}
u_-(t,x) \leq u(t,x) \leq u_+(t,x)
\end{align*}
for $x \in \R$ and $t \geq 0$, where $u_-(t,x) = u_{\rm ref}(t,x - x_0)$ and $u_+(t,x) = u_{\rm ref}(t,x - x_1)$ are translates of the reference solution $u_{\rm ref}(t,x)$ of~\eqref{Burgers-modular-sec3}, see Figure~\ref{fig:antishockanalysis}. Note that $u_-(t,\cdot)$ and $u_+(t,\cdot)$ possess an odd symmetry with respect to the points $x = x_0$ and $x = x_1$, respectively. In particular, it holds $u_-(t,x_0) = 0 = u_+(t,x_1)$. 

We argue by contradiction and assume that $u(t,x)$ possesses zeros $\xi_1(t),\xi_2(t),\xi_3(t)$ for all $t \geq 0$ such that $\xi_1(t) < \xi_2(t) < \xi_3(t)$, $u(t,x) < 0$ for $x \in (\xi_1(t),\xi_2(t))$, $u(t,x) > 0$ for all $x \in (\xi_2(t),\xi_3(t))$, and $u_x(t,\xi_2(t)) > 0$ for all $t \geq 0$. Since $u_\pm(t,\cdot)$ are monotone, it holds $\xi_i(t) \in (x_0,x_1)$ for all $t \geq 0$ and $i = 1,2,3$, see Figure~\ref{fig:antishockanalysis}. By translational invariance, we may assume without loss of generality that $x_0 = 0$. 

As in the proof of Theorem~\ref{t:shockopposite}, we derive differential inequalities for the masses
\begin{align*}
M_1(t) = \int_{-\infty}^{\xi_2(t)} (\phi_* - u(t,x)) \de x, \qquad M_2(t) = \int_{\xi_2(t)}^\infty (u(t,x) + \phi_*) \de x.
\end{align*}
However, in contrast to the proof of Theorem~\ref{t:shockopposite}, we cannot employ the Gel'fand-Oleinik entropy inequality to bound $M_1(t)$ and $M_2(t)$. Instead, we use explicit expressions of the reference solutions $u_\pm(t,\cdot)$ to bound $M_1(0)$ and $M_2(0)$ from above and $M_1(t)$ and $M_2(t)$ from below.

Recalling $u_x(\xi_2(t),0) > 0$, the implicit function theorem implies that $\xi_2(t)$ is differentiable with respect to $t$. We apply Leibniz' rule to compute
\begin{align*}
M_2'(t) &= -\xi_2'(t)\phi_* + \int_{\xi_2(t)}^\infty (u_{xx}(t,x) + |u|_x(t,x)) \de x\\ 
% &= -\xi_2'(t)\phi_* + \left[u_{x}(t,x) + |u|(t,x)\right] \big|_{x = \xi_2(t)}^{x = \infty}\\
&= -\xi_2'(t)\phi_* + \phi_* - u_x(t,\xi_2(t))\\ 
&< \phi_*\left(1-\xi_2'(t)\right).
\end{align*}
Integrating this inequality we arrive at
\begin{align*}
M_2(t) \leq M_2(0) + \phi_*t - \left(\xi_2(t) - \xi_2(0)\right)\phi_*.
\end{align*}
On the other hand, since $u(t,\cdot) - \phi_*$ and $u(t,\cdot) - u_-(t,\cdot)$ are nonnegative by the comparison principle, it holds
\begin{align*}
\left(\xi_3(t) - \xi_2(t)\right)\phi_* + \int_{x_1}^\infty \left(u_-(t,x) + \phi_*\right) \de x \leq M_2(t),
\end{align*}
see also Figure~\ref{fig:antishockanalysis}. Finally, since $u_+(0,\cdot) - u(0,\cdot)$ is nonnegative, we arrive at
\begin{align*}
 M_2(0) \leq 2x_1 \phi_* + \int_{x_1}^\infty (u_+(0,x) + \phi_*) \de x = 2x_1 \phi_* + \int_0^\infty \left(\phi(x) + \phi_*\right) \de x = \phi_*(2x_1 + 1).
\end{align*}
We compute
\begin{align*}
F(t) &:= \int_{x_1}^\infty \left(\frac{u_-(t,x)}{\phi_*} + 1\right) \de x - t\\ 
&= \frac{1}{4} \left(-(2 t+3) \text{erf}\left(\frac{x_1-t}{2 \sqrt{t}}\right)+2 x_1 \text{erfc}\left(\frac{t-x_1}{2 \sqrt{t}}\right)-2 \re^{2 t-x_1} \text{erfc}\left(\frac{x_1-3 t}{2 \sqrt{t}}\right)\right.\\
&\qquad \qquad \left. - \, 2 \re^{x_1} \text{erfc}\left(\frac{x_1+t}{2 \sqrt{t}}\right) + \re^{2 (x_1+t)} \text{erfc}\left(\frac{x_1+3 t}{2 \sqrt{t}}\right) +4 \re^{2 t-x_1}\right.\\
&\qquad \qquad \left. + \, \frac{4 \sqrt{t}}{\sqrt{\pi}} \re^{-\frac{(x_1-t)^2}{4 t}} - 4x_1-2 t+3\right),
\end{align*}
and obtain
\begin{align*}
\lim_{t \to \infty} F(t) = \frac{3}{2} - x_1.
\end{align*}

All in all, we have established
\begin{align*}
\phi_*\left(\xi_3(t) - \xi_2(t) + F(t) + t\right) \leq M_2(t) \leq \phi_*\left(2x_1 + 1 + t - \left(\xi_2(t) - \xi_2(0)\right)\right),
\end{align*}
yielding
\begin{align*}
\xi_3(t) - \xi_2(0) \leq 2x_1 + 1 - F(t) \to 3x_1 - \frac{1}{2} \quad \mbox{\rm as} \;\; t \to \infty.
\end{align*}
Consequently, as $x_1 < \frac{1}{6}$ there exists $t_2 > 0$ such that $\xi_2(t) < \xi_3(t) \leq \xi_2(0)$ for all $t \geq t_2$.

Similarly, by bounding the integral $M_1(t)$, one finds $t_1 > 0$ such that $\xi_2(0) \leq \xi_1(t) < \xi_2(t)$ for all $t \geq t_1$, which contradicts the fact that $\xi_2(t) < \xi_2(0)$ for all $t \geq t_2$. Hence, the interfaces $\xi_1(t), \xi_2(t)$ and $\xi_3(t)$ must coalesce within finite time. 
\end{proof}

\begin{figure}[htb]
	\centering
	\includegraphics[trim = 1 1 1 1, clip, scale=0.7]{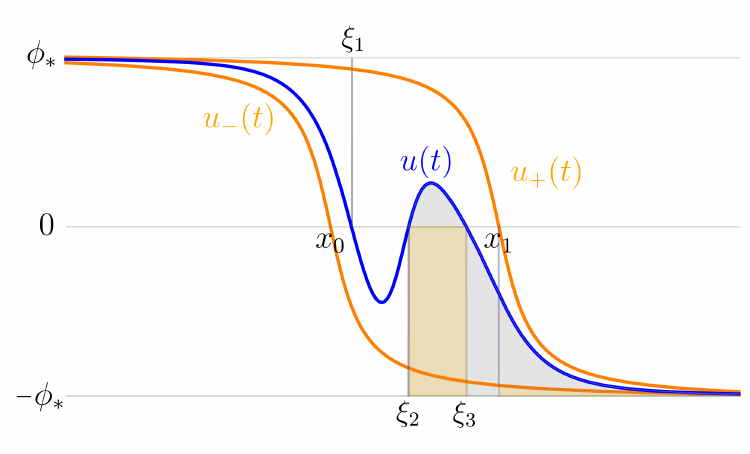}
	\caption{The anti-shock wave $u(t,\cdot)$ connecting the asymptotic end states $\mp \phi_*$ at $\pm \infty$, the odd subsolution $u_-(t,\cdot)$ with zero $x_0$, the odd supersolution $u_+(t,\cdot)$ with zero $x_1$ and the interfaces $\xi_1(t)$, $\xi_2(t)$ and $\xi_3(t)$ of $u(t,\cdot)$ (we suppressed the $t$-dependency of the interfaces). We bound the shaded area below the graph of $u(t,\cdot)$ from below by the orange subareas.}
	\label{fig:antishockanalysis}
\end{figure}

\section{Dynamics near a coalescence event for smooth flux functions} \label{s:interfacedynamics}

Let us consider the initial-value problem for the viscous conservation law:
\begin{equation}
\label{Burgers}
\left\{ \begin{array}{lll} 	u_t = u_{xx} + f'(u) u_x, \quad & t > 0, \quad & x \in \mathbb{R}, \\
u(0,x) = u_0(x), \quad & \quad & x \in \R, \end{array} \right.
\end{equation}
where $f \in C^{\infty}(\R)$ satisfies $f'(0) = 0$. We assume that the initial condition $u_0 \in C^{\infty}(\R)$ is bounded and has bounded derivatives.

From the well-posedness of the viscous conservation law in the class of smooth data, cf. Lemma~\ref{p:globalsmooth}, we know that there exists a smooth solution $u \in C^{\infty}((0,\infty) \times \R,\R)$ to the initial-value problem~(\ref{Burgers}). A zero $x = \xi(t)$ of $u(t,\cdot)$ on $\R$ is a $C^1$-function of $t$ as long as $u_x(t,\xi(t)) \neq 0$ by the implicit function theorem. 

Here we classify the first two bifurcations for which 
the function $t \to \xi(t)$ exists for $t$ in some interval $[0,t_0]$ with $t_0 > 0$ such that $u(t,\xi(t)) = 0$ for $t \in [0,t_0]$ and $u_x(t,\xi(t)) \neq 0$ for $t \in [0,t_0)$ but may fail to exist for $t > t_0$ because we have
$u_x(t_0,\xi_0) = 0$ at $\xi_0 = \xi(t_0)$. 

\subsection{Fold bifurcation} 

The main result is given by the following proposition.

\begin{proposition}
	\label{prop-fold}
Assume that there exists $(t_0,\xi_0) \in (0,\infty) \times \R$ such that 
$$
u_x(t_0,\xi_0) = 0 \quad \mbox{\rm and} \quad u_{xx}(t_0,\xi_0) \neq 0.
$$ 
Then, there exist two roots of $u(t,\cdot)$ near $\xi_0$ for $t < t_0$ near $t_0$, denoted by $\xi_{1,2}(t)$, such that 
\begin{equation}
\label{claim-xi-fold}
\xi_{1,2}(t) - \xi_0 = \pm \sqrt{2(t_0 - t)}  + \mathcal{O}(t_0-t) \quad \mbox{\rm as} \quad t \to t_0^-
\end{equation}
and
\begin{equation}
\label{claim-der-fold}
u_x(t,\xi_{1,2}(t)) = \pm \sqrt{2(t_0 - t)} u_{xx}(t_0,\xi_0) + \mathcal{O}(t_0-t) \quad \mbox{\rm as} \quad t \to t_0^-. 
\end{equation}
No roots of $u(t,\cdot)$ near $\xi_0$ exist for $t > t_0$ near $t_0$.
\end{proposition}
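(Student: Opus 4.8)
The plan is to localize near $(t_0,\xi_0)$ and to reduce the question to counting the zeros of $u(t,\cdot)$ relative to its spatial critical point. Since $\xi_0 = \xi(t_0)$ is a zero we have $u(t_0,\xi_0)=0$, and the hypotheses give $u_x(t_0,\xi_0)=0$ and $u_{xx}(t_0,\xi_0)=:a\neq 0$. Evaluating~\eqref{Burgers} at $(t_0,\xi_0)$ and using $u_x(t_0,\xi_0)=0$ yields the key identity $u_t(t_0,\xi_0)=u_{xx}(t_0,\xi_0)=a$. Because $u$ is smooth with $u_x(t_0,\xi_0)=0$ and $\partial_x u_x(t_0,\xi_0)=a\neq 0$, the implicit function theorem produces a smooth critical-point curve $\eta(t)$ for $t$ near $t_0$ with $\eta(t_0)=\xi_0$ and $u_x(t,\eta(t))=0$. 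Setting $g(t)=u(t,\eta(t))$, differentiation along the curve together with $u_x(t,\eta(t))=0$ gives $g(t_0)=0$ and $g'(t_0)=u_t(t_0,\xi_0)=a$, so $g(t)=a(t-t_0)+\mathcal{O}((t-t_0)^2)$.

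The counting statement then follows from convexity. Shrinking to a neighborhood $U$ of $\xi_0$ and a time interval about $t_0$ on which $u_{xx}$ retains the sign of $a$, each $u(t,\cdot)$ is strictly convex (if $a>0$) or concave (if $a<0$) on $U$; as $u_x$ is then strictly monotone, the unique interior critical point is $\eta(t)$ with extremal value $g(t)$. A strictly convex or concave function on $U$ that keeps the sign of $a$ on $\partial U$ (which holds by continuity from the profile $u(t_0,\cdot)=\tfrac12 a(x-\xi_0)^2+\mathcal{O}((x-\xi_0)^3)$) has exactly two zeros in $U$ precisely when its extremum has sign opposite to $a$, and none when the signs agree. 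Since $g(t)$ has the sign of $a(t-t_0)$, this gives exactly two zeros for $t<t_0$ and none for $t>t_0$, as claimed.

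For the scaling laws I would expand $u$ about the critical curve, writing $u(t,\eta(t)+z)=g(t)+\tfrac12\beta(t,z)z^2$ with $\beta(t,z)=2\int_0^1(1-\sigma)\,u_{xx}(t,\eta(t)+\sigma z)\,\de\sigma$ smooth and $\beta(t_0,0)=a$, so that the nearby roots solve $\beta(t,z)z^2+2g(t)=0$. The only delicate point is that the expected root size $\sqrt{t_0-t}$ is not smooth in $t$; I would resolve this by the blow-up substitution $r=\sqrt{t_0-t}$ and $z=r\zeta$. Dividing by $r^2$ converts the equation into $\hat{\Phi}(r,\zeta):=\zeta^2\beta(t_0-r^2,r\zeta)+2g(t_0-r^2)/r^2=0$, where $2g(t_0-r^2)/r^2$ extends smoothly to $r=0$ with value $-2a$, so $\hat{\Phi}$ is smooth near $(0,\pm\sqrt{2})$ with $\hat{\Phi}(0,\zeta)=a(\zeta^2-2)$. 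Since $\partial_\zeta\hat{\Phi}(0,\pm\sqrt{2})=\pm 2\sqrt{2}\,a\neq 0$, the implicit function theorem yields smooth branches $\zeta_\pm(r)$ with $\zeta_\pm(0)=\pm\sqrt{2}$, whence $z_\pm=r\zeta_\pm(r)=\pm\sqrt{2(t_0-t)}+\mathcal{O}(t_0-t)$ and $\xi_{1,2}(t)=\eta(t)+z_\pm=\xi_0\pm\sqrt{2(t_0-t)}+\mathcal{O}(t_0-t)$, which is~\eqref{claim-xi-fold}. Finally, expanding $u_x(t,\eta(t)+z)=u_{xx}(t,\eta(t))z+\mathcal{O}(z^2)$ at $z=z_\pm$ and using $u_{xx}(t,\eta(t))=a+\mathcal{O}(t_0-t)$ together with $z_\pm^2=\mathcal{O}(t_0-t)$ produces $u_x(t,\xi_{1,2}(t))=\pm\sqrt{2(t_0-t)}\,u_{xx}(t_0,\xi_0)+\mathcal{O}(t_0-t)$, which is~\eqref{claim-der-fold}. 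The main obstacle is exactly this non-smoothness at the bifurcation time, circumvented by the reparametrization $r=\sqrt{t_0-t}$ that restores smoothness and makes the implicit function theorem applicable.
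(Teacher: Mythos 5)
Your proof is correct, and it takes a genuinely different route from the paper's. The paper Taylor-expands the identity $u(t,\xi(t))=0$ about $(t_0,\xi_0)$ and reads off the dominant balance $(t-t_0)\,u_t(t_0,\xi_0) + \tfrac12(\xi-\xi_0)^2 u_{xx}(t_0,\xi_0)$ from a Newton polygon, which yields the two branches and the scaling $\pm\sqrt{2(t_0-t)}$ directly but leaves the existence/nonexistence and exact count of roots somewhat implicit in the Newton-polygon formalism. You instead first build the critical-point curve $\eta(t)$ by the implicit function theorem applied to $u_x=0$ (legitimate since $u_{xx}(t_0,\xi_0)\neq 0$), track the extremal value $g(t)=u(t,\eta(t))$ with $g'(t_0)=u_t(t_0,\xi_0)=u_{xx}(t_0,\xi_0)$, and settle the counting by strict convexity/concavity on a fixed neighborhood with sign control on the boundary; this gives a cleaner and fully rigorous proof that there are \emph{exactly} two roots for $t<t_0$ and none for $t>t_0$. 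For the asymptotics, your blow-up substitution $r=\sqrt{t_0-t}$, $z=r\zeta$ applied to the desingularized equation $\zeta^2\beta(t_0-r^2,r\zeta)+2g(t_0-r^2)/r^2=0$ (with the second term extending smoothly to $r=0$ because $g(t_0)=0$) makes the square-root expansion a consequence of the implicit function theorem at the simple zeros $\zeta=\pm\sqrt{2}$ of $a(\zeta^2-2)$, which is a rigorous substitute for the paper's Newton-polygon argument. Both approaches hinge on the same key identity $u_t(t_0,\xi_0)=u_{xx}(t_0,\xi_0)$ coming from the PDE; yours trades brevity for a sharper zero count and an airtight treatment of the non-smoothness of $\sqrt{t_0-t}$ at the bifurcation time.
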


\begin{proof}
By using the equation of motion in~(\ref{Burgers}), we have 
$$
u_t(t_0,\xi_0) = u_{xx}(t_0,\xi_0) \neq 0.
$$
Moreover, using Taylor expansions for smooth solutions, we obtain 
for any root $\xi(t)$ of $u(t,\cdot)$ near $\xi_0$:
\begin{align*}
0 & = u(t,\xi(t)) \\
&= \underbrace{u(t_0,\xi_0)}_{=0} + (t-t_0) \underbrace{u_t(t_0,\xi_0)}_{\neq 0} 
+ (\xi(t) - \xi_0)  \underbrace{u_x(t_0,\xi_0)}_{=0} \\
& \quad + \frac{1}{2} (t-t_0)^2 u_{tt}(t_0,\xi_0) 
+ (t-t_0) (\xi(t) - \xi_0)  u_{tx}(t_0,\xi_0) 
+ \frac{1}{2} (\xi(t) - \xi_0) ^2  \underbrace{u_{xx}(t_0,\xi_0)}_{\neq 0} + \mathcal{O}(3).
\end{align*}
It follows from the Newton's polygon in Figure~\ref{fig:Newton} that this expansion defines two roots for $\xi(t)$, denoted by $\xi_{1,2}(t)$, which are given by 
the expansion
\begin{align*}
\xi_{1,2}(t) - \xi_0 &= \pm \sqrt{\frac{2 u_{t}(t_0,\xi_0)}{u_{xx}(t_0,\xi_0)} (t_0 -t)} + \mathcal{O}(t_0-t) \\
&= \pm \sqrt{2 (t_0 -t)} + \mathcal{O}(t_0-t),
\end{align*}
which exist for $t < t_0$ near $t_0$, coalesce at $t = t_0$ and disappear for $t > t_0$. 
We also obtain 
\begin{align*}
u_x(t,\xi_{1,2}(t)) &= \underbrace{u_x(t_0,\xi_0)}_{=0} + (t-t_0) u_{tx}(t_0,\xi_0) 
+ (\xi_{1,2}(t) - \xi_0)   \underbrace{u_{xx}(t_0,\xi_0)}_{\neq 0} + \mathcal{O}(2) \\
&= \pm \sqrt{2(t_0-t)} u_{xx}(t_0,\xi_0) + \mathcal{O}(t_0 - t).
\end{align*}
Both expansions prove the validity of~(\ref{claim-xi-fold}) and~(\ref{claim-der-fold}).
\end{proof}

\subsection{Pitchfork bifurcation}

The main result is given by the following proposition. 

\begin{proposition}
	\label{prop-pitchfork}
	Assume that there exists $(t_0,\xi_0) \in (0,\infty) \times \R$ such that 
	$$
u_x(t_0,\xi_0) = 0, \quad u_{xx}(t_0,\xi_0) = 0, \quad \mbox{\rm and} \quad u_{xxx}(t_0,\xi_0) \neq 0.
	$$ 
Then, there exist three roots of $u(t,\cdot)$ near $\xi_0$ for $t < t_0$ near $t_0$ and one root near $\xi_0$ for $t > t_0$ near $t_0$. Two roots, denoted by $\xi_{1,2}(t)$, are not continued for $t > t_0$ and satisfy 
\begin{equation}
\label{claim-xi}
\xi_{1,2}(t) - \xi_0 = \pm \sqrt{6(t_0 - t)} + \mathcal{O}(t_0-t) 
\quad \mbox{\rm as} \quad t \to t_0^-, 
\end{equation}
whereas the third root, denoted by $\xi(t)$, is continued for $t > t_0$ and  satisfies 
\begin{equation}
\label{claim-xi-single}
\xi(t) - \xi_0 = \frac{u_{tt}(t_0,\xi_0)}{2 u_{xxx}(t_0,\xi_0)} (t_0 - t) + \mathcal{O}((t_0-t)^2) 
\quad \mbox{\rm as} \quad t \to t_0. 
\end{equation}
We also have
\begin{equation}
\label{claim-der}
u_x(t,\xi_{1,2}(t)) = 2 u_{xxx}(t_0,\xi_0) (t_0 - t) + \mathcal{O}((t_0-t)^{3/2}) \quad \mbox{\rm as} \;\; t \to t_0^- 
\end{equation}
and
\begin{equation}
\label{claim-der-single}
u_x(t,\xi(t)) = u_{xxx}(t_0,\xi_0) (t - t_0) + \mathcal{O}((t_0-t)^2) \quad \mbox{\rm as} \;\; t \to t_0.
\end{equation}
\end{proposition}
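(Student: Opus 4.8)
The plan is to follow the strategy of the proof of Proposition~\ref{prop-fold}, adapting it to the higher degeneracy $u_{xx}(t_0,\xi_0)=0$, which replaces the single quadratic balance by a cubic one and thereby splits the Newton polygon into two segments. First I would read off the relevant derivatives at $(t_0,\xi_0)$ from the equation of motion in~\eqref{Burgers}. Since $u_x(t_0,\xi_0)=u_{xx}(t_0,\xi_0)=0$, the equation gives $u_t(t_0,\xi_0)=u_{xx}(t_0,\xi_0)+f'(u)u_x=0$, so the first temporal derivative now vanishes as well. Differentiating the equation once in $x$ yields $u_{tx}=u_{xxx}+f''(u)u_x^2+f'(u)u_{xx}$, whence $u_{tx}(t_0,\xi_0)=u_{xxx}(t_0,\xi_0)\neq 0$. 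Consequently the Taylor expansion of $u$ about $(t_0,\xi_0)$ has no zeroth- or first-order part, and among its second-order terms only $u_{tx}\,\tau\sigma$ survives, where I abbreviate $\tau=t-t_0$ and $\sigma=\xi(t)-\xi_0$; the lowest genuinely cubic term is $\tfrac16 u_{xxx}\sigma^3$.

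Setting $0=u(t,\xi(t))$ and examining the Newton polygon of the resulting expansion, the relevant monomials sit at the exponents $(2,0)$, $(1,1)$ and $(0,3)$, coming from $\tfrac12 u_{tt}\tau^2$, $u_{tx}\tau\sigma$ and $\tfrac16 u_{xxx}\sigma^3$. Its lower hull has two segments. The segment joining $(0,3)$ to $(1,1)$ enforces $u_{tx}\tau+\tfrac16 u_{xxx}\sigma^2=0$, that is $\sigma^2=6(t_0-t)$ after using $u_{tx}=u_{xxx}$, which produces the pair $\xi_{1,2}$ and the asymptotics~\eqref{claim-xi}. The segment joining $(1,1)$ to $(2,0)$ enforces $\tfrac12 u_{tt}\tau+u_{tx}\sigma=0$, that is $\sigma=\tfrac{u_{tt}}{2u_{xxx}}(t_0-t)$, which produces the single root $\xi$ and~\eqref{claim-xi-single}.

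To make the count and the asymptotics rigorous I would combine a blow-up with the implicit function theorem. Observing that $u(t_0,\cdot)$ vanishes to exactly third order at $\xi_0$, the Malgrange preparation theorem factors $u(t,\xi_0+\sigma)$ near $(t_0,\xi_0)$ as a nonvanishing smooth factor times a monic cubic in $\sigma$ with smooth coefficients vanishing at $t_0$; hence there are at most three roots near $\xi_0$ for $t$ near $t_0$. To locate them I set $\mu=\sqrt{t_0-t}$ and substitute $\sigma=s\mu$. Here the hypothesis $u_{xx}(t_0,\xi_0)=0$ is essential: it removes the $(0,2)$ monomial $\tfrac12 u_{xx}\sigma^2$, which is exactly what permits dividing the expansion by $\mu^3$ to obtain a quotient $\Phi(s,\mu)$ that is smooth up to $\mu=0$, with $\Phi(s,0)=\tfrac16 u_{xxx}\,s(s^2-6)$. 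This limiting cubic has three simple zeros $s=0,\pm\sqrt6$, at which $\partial_s\Phi(\cdot,0)$ equals $-u_{xxx}$ and $2u_{xxx}$, all nonzero, so the implicit function theorem yields three smooth branches $s(\mu)$ for small $\mu\geq 0$, accounting for all roots. The branches from $s=\pm\sqrt6$ give $\xi_{1,2}(t)-\xi_0=\pm\sqrt{6(t_0-t)}+\mathcal{O}(t_0-t)$ and exist only for $t<t_0$, since $\sigma^2=6(t_0-t)$ has no real solution for $t>t_0$.

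The branch from $s=0$ obeys $\sigma=\mathcal{O}(\mu^2)=\mathcal{O}(t_0-t)$ and is better described by the linear scaling $\sigma=r(t-t_0)$: dividing the expansion by $(t-t_0)^2$ produces a quotient smooth in $(r,t)$ with limiting equation $\tfrac12 u_{tt}+u_{tx}r=0$, whose simple root $r=-\tfrac{u_{tt}}{2u_{xxx}}$ persists for both signs of $t-t_0$; this is the root $\xi(t)$ that continues through $t=t_0$ and yields~\eqref{claim-xi-single}. Finally, the derivative estimates follow by inserting the three root expansions into the Taylor expansion $u_x(t,\xi(t))=u_{tx}\tau+\tfrac12 u_{xxx}\sigma^2+\mathcal{O}(\mu^3)$: along $\xi_{1,2}$ the two leading terms combine as $(-1+3)u_{xxx}(t_0-t)=2u_{xxx}(t_0-t)$, which is~\eqref{claim-der}, while along $\xi$ the quadratic term is of order $(t_0-t)^2$ and one is left with $u_{xxx}(t-t_0)+\mathcal{O}((t_0-t)^2)$, which is~\eqref{claim-der-single}. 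I expect the main obstacle to be the careful verification that the blow-up quotients are genuinely smooth across $\mu=0$ and $t=t_0$, so that the implicit function theorem applies and the formal Newton-polygon branches indeed exhaust the roots; the vanishing of $u_{xx}(t_0,\xi_0)$ is precisely the structural fact that makes the square-root quotient smooth and produces a pitchfork rather than the generic fold of Proposition~\ref{prop-fold}.
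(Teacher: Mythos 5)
Your proposal is correct and follows essentially the same route as the paper: compute $u_t(t_0,\xi_0)=0$ and $u_{tx}(t_0,\xi_0)=u_{xxx}(t_0,\xi_0)\neq 0$ from the equation, Taylor-expand $u(t,\xi(t))=0$, read off the two Newton-polygon balances $(0,3)$--$(1,1)$ and $(1,1)$--$(2,0)$ to get~\eqref{claim-xi} and~\eqref{claim-xi-single}, and then expand $u_x$ along the branches to get~\eqref{claim-der} and~\eqref{claim-der-single}, with all leading coefficients matching. The only difference is that you supply the rigorous underpinning (Malgrange preparation plus the blow-up $\sigma=s\mu$ and the implicit function theorem applied to the simple zeros of $\tfrac16 u_{xxx}\,s(s^2-6)$) that the paper delegates to the Newton-polygon heuristic; this is a welcome strengthening rather than a different argument.
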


\begin{remark}
The scaling laws~(\ref{claim-xi}) and~(\ref{claim-der}) were conjectured in~\cite{PEDR} based on numerical simulations of spatially odd 
solutions of the modular Burgers' equation. 
Proposition~\ref{prop-pitchfork} shows that this behavior holds for every scalar viscous conservation law~\eqref{modBurgers} with smooth nonlinearity and smooth initial data. 
\end{remark}

\begin{remark}
The asymptotic expansions~(\ref{claim-xi}) and~(\ref{claim-der}) imply 
\begin{equation}
\label{claim-der-second}
u_{xx}(t,\xi_{1,2}(t)) = \pm u_{xxx}(t_0,\xi_0) \sqrt{6(t_0 - t)} + \mathcal{O}(t_0-t) \quad \mbox{\rm as} \quad t \to t_0^-,
\end{equation}
which was also conjectured in~\cite{PEDR}. Indeed, if we differentiate $u(t,\xi(t)) = 0$ with the chain rule for the smooth solutions $u \in C^{\infty}((0,\infty) \times \R,\R)$ for $t \in (0,t_0)$, while assuming that $u_x(t,\xi(t)) \neq 0$, $\xi(t)$ of $u(t,\cdot)$, then we obtain from~(\ref{Burgers}) with $f'(0) = 0$:
$$
u_t(t,\xi(t)) + \xi'(t) u_x(t,\xi(t)) = 0 \quad \Rightarrow \quad u_{xx}(t,\xi(t)) = -\xi'(t) u_x(t,\xi(t)),
$$
for $t \in (0,t_0)$. Hence,~(\ref{claim-xi}) and~(\ref{claim-der}) imply~(\ref{claim-der-second}). Similarly, we can derive from~(\ref{claim-xi-single}) and~(\ref{claim-der-single}):
\begin{equation}
\label{claim-der-second-single}
u_{xx}(t,\xi(t)) = \frac{1}{2} u_{tt}(t_0,\xi_0) (t-t_0) + \mathcal{O}((t_0-t)^2) \quad \mbox{\rm as} \quad t \to t_0,
\end{equation}
for the third root $\xi(t)$ which exists for all $t$ near $t_0$.
\end{remark}

\begin{remark}
It follows from~(\ref{claim-xi}) and~(\ref{claim-xi-single}) that the three interfaces satisfy the natural ordering for the pitchfork bifurcation 
	$$
	\xi_1(t) < \xi(t) < \xi_2(t),
	$$
for $t < t_0$ near $t_0$. It follows from~(\ref{claim-der}) and~(\ref{claim-der-single}) that 
the sign of the first partial derivative of $u(t,x)$ in $x$ at $x = \xi(t)$ is opposite to the one at $x = \xi_{1,2}(t)$ for $t < t_0$ near $t_0$.  
\end{remark}

\begin{remark}
	\label{remark-odd}
	If $u_0(-x) = -u_0(x)$ and $f'(-z) = -f'(z)$ for $z \in \R$ in~(\ref{Burgers}), then $u(t,-x) = -u(t,x)$ for every $t > 0$ and $x \in \R$. In this case of odd symmetry, if the assumptions of Proposition~\ref{prop-pitchfork} are satisfied and $\xi_0 = 0$, then $\xi(t) = 0$ for all $t \geq 0$. Consequently, we have
	$$
	u(t,0) = u_{xx}(t,0) = 0,
	$$
 for all $t \geq 0$.
\end{remark}

\begin{proof}[Proof of Proposition~\ref{prop-pitchfork}.]
By using the equation of motion in~(\ref{Burgers}), we have 
$$
u_t(t_0,\xi_0) = 0 \quad \mbox{\rm and } \quad 
u_{tx}(t_0,\xi_0) = u_{xxx}(t_0,\xi_0) \neq 0.
$$
Moreover, using Taylor expansions for smooth solutions, we obtain 
for any root $\xi(t)$ of $u(t,\cdot)$ near $\xi_0$:
\begin{align*}
0 & = u(t,\xi(t)) \\
&= \underbrace{u(t_0,\xi_0)}_{=0} + (t-t_0) \underbrace{u_t(t_0,\xi_0)}_{=0} 
+ (\xi(t) - \xi_0) \underbrace{u_x(t_0,\xi_0)}_{=0} \\
& \quad + \frac{1}{2} (t-t_0)^2 u_{tt}(t_0,\xi_0) 
+ (t-t_0) (\xi(t) - \xi_0) \underbrace{u_{tx}(t_0,\xi_0)}_{\neq 0} 
	+ \frac{1}{2} (\xi(t) - \xi_0)^2  \underbrace{u_{xx}(t_0,\xi_0)}_{=0} \\
	& \quad + \frac{1}{6} (t-t_0)^3 u_{ttt}(t_0,\xi_0)
	+ \frac{1}{2} (t-t_0)^2 (\xi(t) - \xi_0) u_{ttx}(t_0,\xi_0) \\
	& \quad + \frac{1}{2} (t-t_0) (\xi(t) - \xi_0)^2  u_{txx}(t_0,\xi_0)+ \frac{1}{6} (\xi(t) - \xi_0)^3 \underbrace{u_{xxx}(t_0,\xi_0)}_{\neq 0} + \mathcal{O}(4).
\end{align*}
It follows from the Newton's polygon in Figure~\ref{fig:Newton} that this expansion defines two sets of roots. One set appears at the balance of $(t-t_0)(\xi(t)-\xi_0)$ and $(\xi(t)-\xi_0)^3$ terms and the other set appears at the balance between $(t-t_0)^2$ and $(t-t_0)(\xi(t)-\xi_0)$ terms. 

The former set is represented by two roots denoted as $\xi_{1,2}(t)$ 
which satisfy the expansion 
\begin{align*}
\xi_{1,2}(t) - \xi_0 &= \pm \sqrt{\frac{6 u_{tx}(t_0,\xi_0)}{u_{xxx}(t_0,\xi_0)} (t_0 -t)} + \mathcal{O}(t_0-t) \\
&= \pm \sqrt{6 (t_0 -t)} + \mathcal{O}(t_0-t).
\end{align*}
The two roots exist for $t < t_0$ near $t_0$, coealesce at $t = t_0$ 
and disappear for $t > t_0$. We also obtain 
\begin{align*}
u_x(t,\xi_{1,2}(t)) & = \underbrace{u_x(t_0,\xi_0)}_{=0} 
+ (t-t_0) \underbrace{u_{tx}(t_0,\xi_0)}_{\neq 0}
+ (\xi_{1,2}(t) - \xi_0) \underbrace{u_{xx}(t_0,\xi_0)}_{=0} \\
& \quad + \frac{1}{2} (t-t_0)^2 u_{ttx}(t_0,\xi_0) 
+ (t-t_0) (\xi_{1,2}(t) - \xi_0)   u_{txx}(t_0,\xi_0)\\
& \quad + \frac{1}{2} (\xi_{1,2}(t) - \xi_0)^2 \underbrace{u_{xxx}(t_0,\xi_0)}_{\neq 0} + \mathcal{O}(3),
\end{align*}
which implies 
\begin{equation*}
u_x(t,\xi_{1,2}(t)) = 2 u_{xxx}(t_0,\xi_0) (t_0 - t) + \mathcal{O}((t_0-t)^{3/2}) \quad \mbox{\rm as} \;\; t \to t_0^-.
\end{equation*}
These expansions prove the validity of~(\ref{claim-xi}) and~(\ref{claim-der}). 

The latter set is represented by one root denoted by $\xi(t)$ which satisfies the expansion 
\begin{align*}
\xi(t) - \xi_0 &= \frac{u_{tt}(t_0,\xi_0)}{2 u_{tx}(t_0,\xi_0)} (t_0 -t) + \mathcal{O}((t_0-t)^2) \\
&= \frac{u_{tt}(t_0,\xi_0)}{2 u_{xxx}(t_0,\xi_0)} (t_0 -t) + \mathcal{O}((t_0-t)^2).
\end{align*}
The root $\xi(t)$ persists for all $t$ near $t_0$. We also obtain 
\begin{align*}
u_x(t,\xi(t)) & = \underbrace{u_x(t_0,\xi_0)}_{=0} 
+ (t-t_0) \underbrace{u_{tx}(t_0,\xi_0)}_{\neq 0}
+ (\xi(t) - \xi_0) \underbrace{u_{xx}(t_0,\xi_0)}_{=0} \\
& \quad + \frac{1}{2} (t-t_0)^2 u_{ttx}(t_0,\xi_0) 
+ (t-t_0) (\xi(t) - \xi_0)   u_{txx}(t_0,\xi_0) \\
& \quad + \frac{1}{2} (\xi(t) - \xi_0)^2 \underbrace{u_{xxx}(t_0,\xi_0)}_{\neq 0} + \mathcal{O}(3),
\end{align*}
which implies 
\begin{equation*}
u_x(t,\xi(t)) = u_{xxx}(t_0,\xi_0) (t - t_0) + \mathcal{O}((t_0-t)^2) \quad \mbox{\rm as} \;\; t \to t_0.
\end{equation*}
These expansions prove the validity of~(\ref{claim-xi-single}) and~(\ref{claim-der}). 
\end{proof}

\begin{figure}[htb]
	\centering
	\begin{subfigure}{0.49\textwidth}
	\includegraphics[scale=0.42]{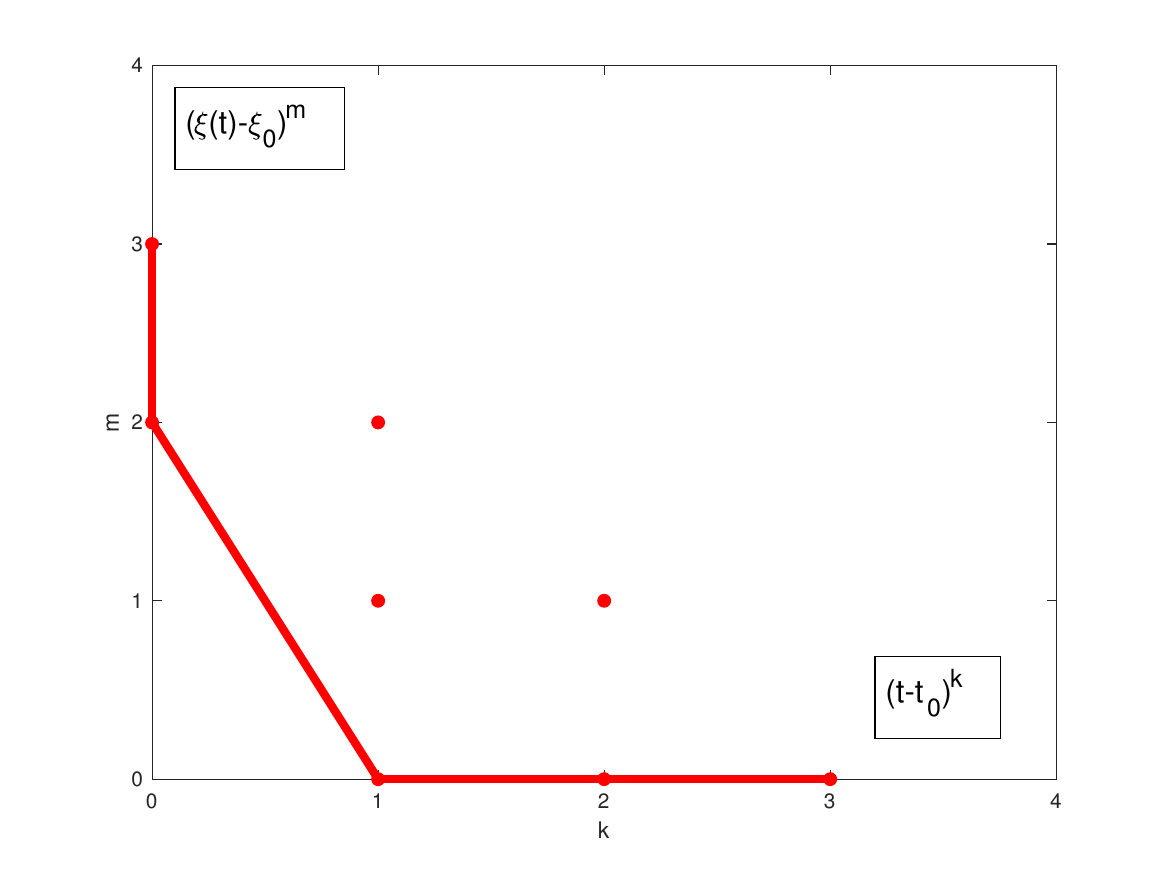}
    \end{subfigure}
	\hfill
	\begin{subfigure}{0.49\textwidth}
	\includegraphics[scale=0.42]{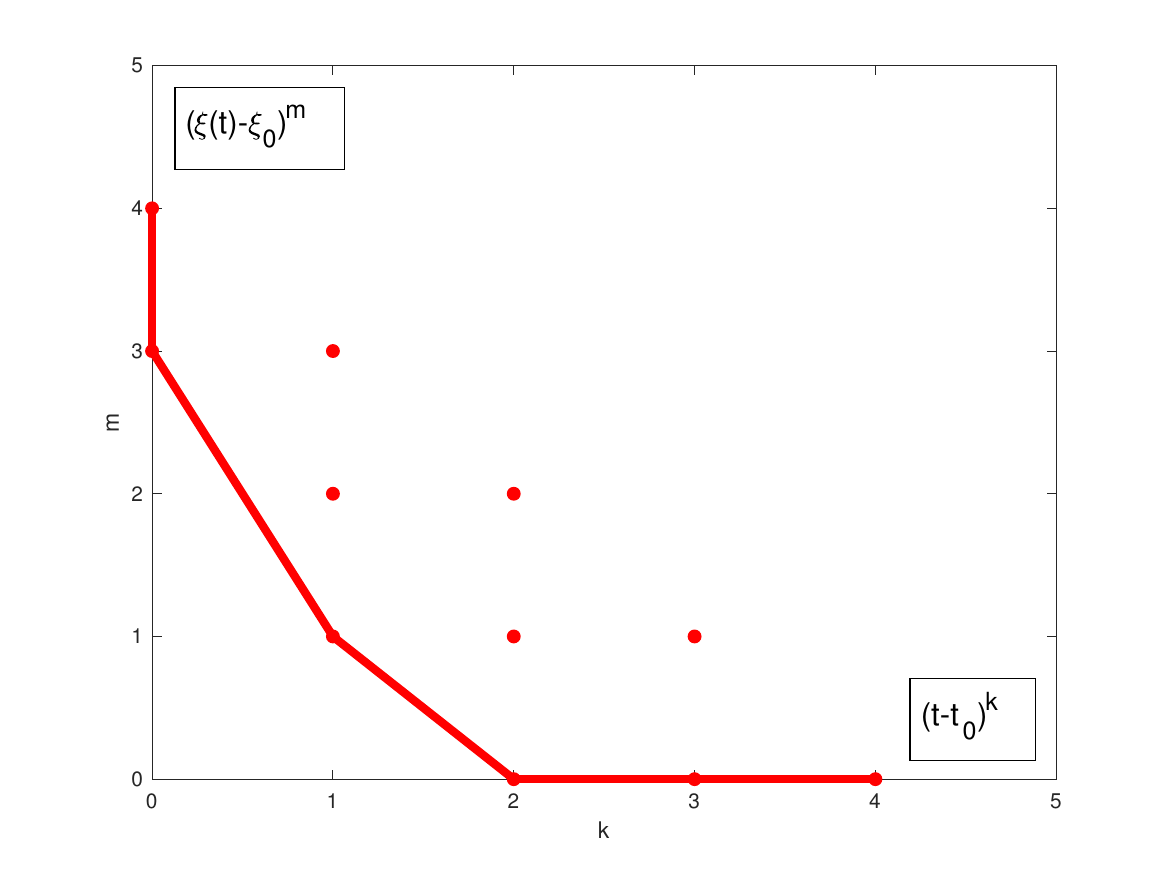}
	\end{subfigure}
	\caption{Newton's polygons used in the proofs of Proposition~\ref{prop-fold} (left) and Proposition~\ref{prop-pitchfork} (right).}
	\label{fig:Newton}
\end{figure}

\subsection{Bifurcations of higher order}
By continuing the analysis from the previous two subsections, one can characterize coalescence of roots of $u(t,\cdot)$ in the non-generic case when there exists an integer 
$m \geq 4$ and $(t_0,\xi_0) \in (0,\infty) \times \R$ such that 
all partial derivatives of $u(t,x)$ in $x$ at $(t_0,\xi_0)$ are zero 
up to the $m$-th order and the $m$-th partial derivative 
of $u(t,x)$ in $x$ at $(t_0,\xi_0)$ is nonzero.

\subsection{Viscous Burgers' equation with quadratic nonlinearity}

We give a precise description of a class of solutions to the viscous Burgers' equation whose zeros undergo a pitchfork bifurcation. Thus, we take $f(u) = u^2$ in~\eqref{Burgers} and consider the initial value problem for the Burgers' equation
\begin{equation}
\label{Burgers-quad}
\left\{ \begin{array}{l} u_t = u_{xx} + 2 u u_x, \quad t > 0, \\
u(0,x) = u_0(x), \quad x \in \R. \end{array} \right.
\end{equation}
As is well-known,~(\ref{Burgers-quad}) can be solved explicitly using the Cole-Hopf transformation (see Section 3.6 in~\cite{MILL}). We will use the decomposition near the stationary shock-wave solution $\phi(x) = \tanh(x)$ of~\eqref{Burgers-quad} to show that the pitchfork bifurcation of Proposition~\ref{prop-pitchfork} does happen within finite time for all solutions of~\eqref{Burgers-quad} with  spatially odd initial data $u_0$ having a single zero on $(0,\infty)$. The main result is given by the following proposition. 

\begin{proposition}
	\label{prop-Burgers}
	Let $u_0 \in C^{\infty}(\R)$ satisfy 
\begin{itemize}
	\item $u_0 \mp 1$, $u_0'$ and $u_0''$ are $L^2$-integrable on $\R_\pm$,
	\item $u_0(-x) = -u_0(x)$ for $x \in \mathbb{R}$,
	\item for some $x_0 \in \mathbb{R}_+$, we have $u_0(x) < 0$ for $x \in (0,x_0)$ and $u_0(x) > 0$ for $x \in (x_0,\infty)$.
\end{itemize}
Then, there exist a time $t_0 \in (0,\infty)$ and $\xi \in C^{\infty}((0,t_0),\mathbb{R}_+)$ such that 
	the solution $u \in C^{\infty}((0,\infty) \times \R,\R)$ to the initial-value problem~(\ref{Burgers-quad}) satisfies:
\begin{itemize}
	\item[(i)] $\lim_{x \to \pm \infty} u(t,x) = \pm 1$ for $t \geq 0$,
	\item[(ii)] $u(t,-x) = -u(t,x)$ for $t \geq 0$ and $x \in \mathbb{R}$, 
	\item[(iii)] $u(t,x) < 0$ for $x \in (0,\xi(t))$ and $u(t,x) > 0$ for $x \in (\xi(t),\infty)$ if $t \in [0,t_0)$, 
	\item[(iv)] $u(t,x) > 0$ for $x \in (0,\infty)$ if $t \geq t_0$.
\end{itemize}
Moreover, we have $u_x(t_0,0) = 0$, $u_{xx}(t_0,0) = 0$, and $u_{xxx}(t_0,0) > 0$.
\end{proposition}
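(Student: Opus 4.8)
The plan is to linearize the problem completely. Writing $u=\phi_x/\phi$, a direct computation shows that $u$ solves~\eqref{Burgers-quad} precisely when $\phi$ solves the heat equation $\phi_t=\phi_{xx}$; the stationary shock $\tanh(x)$ corresponds to the potential $\cosh(x)$. I would take as initial potential $\phi_0(x)=\exp\big(\int_0^x u_0(s)\,\de s\big)>0$, so that $\phi(t,\cdot)=\re^{\partial_x^2 t}\phi_0$ stays strictly positive and $u(t,\cdot)=\phi_x(t,\cdot)/\phi(t,\cdot)$ is the smooth solution of~\eqref{Burgers-quad}. The decisive observation is that the \emph{interfaces of $u$ are exactly the critical points of $\phi$}, i.e.\ the zeros of $\phi_x$. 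Since $u_0$ is odd, $\phi_0$ is even, and hence $\phi(t,\cdot)$ is even for all $t$ because the heat kernel is even; this immediately yields the odd symmetry~(ii), reproving Remark~\ref{remark-odd} in the present setting. The asymptotic limits~(i) follow from the fact that $u_0\mp1\in L^2(\R_\pm)$ places $u_0$ close to $\tanh$ at $\pm\infty$; combined with comparison against translates of the steady profile $\tanh$ and the decomposition $u=\tanh+(\text{decaying})$ near the stationary shock, the limits $\pm1$ are preserved for all $t$. The sign hypotheses on $u_0$ translate into $\phi_0$ being an even double well: local minima at $\pm x_0$ and a local maximum at $0$.

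\textbf{Zero count via Sturm and parity.} Next I would count the zeros of $\phi_x(t,\cdot)$. As $\phi_x$ solves the heat equation and is odd, its zeros are symmetric about $0$, the origin is always a zero, and therefore the number $N(t)$ of sign-changes is always \emph{odd}. By the Sturm theorem~\cite[Theorem~B]{ANG}, $N(t)$ is non-increasing, and $N(0)=3$ (the sign-changes $-x_0,0,x_0$ of $u_0$). By the convergence of odd shock data to the centered stationary shock $\tanh$ in $L^\infty$~\cite{FRSE,ILOL}, one has $u_x(t,0)\to\tanh'(0)=1$, so $N(t)=1$ for all large $t$. Because the three sign-changes at $-x_0,0,x_0$ are isolated at $t=0$, they persist for small $t>0$, so $N(t)=3$ there and $t_0:=\inf\{t:N(t)=1\}\in(0,\infty)$. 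Parity forces the jump to be directly from $3$ to $1$, and by~\cite[Theorem~B]{ANG} the three zeros coalesce at a single space--time point which, by oddness, must be the origin. Since the total count is $3$, the resulting zero of $\phi_x(t_0,\cdot)$ has multiplicity \emph{exactly} three, that is
\begin{equation*}
\phi_x(t_0,0)=\phi_{xx}(t_0,0)=\phi_{xxx}(t_0,0)=0,\qquad \phi_{xxxx}(t_0,0)\neq 0 .
\end{equation*}

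\textbf{Translating back to $u$.} For $t\in(0,t_0)$ the nonzero critical points form a symmetric simple pair $\pm\zeta(t)$, and by the implicit function theorem $\zeta\in C^\infty((0,t_0),\R_+)$; I set $\xi=\zeta$. Reading off the sign of $\phi_x$ for the even double well (minima at $\pm\zeta(t)$, maximum at $0$) gives $u<0$ on $(0,\xi(t))$ and $u>0$ on $(\xi(t),\infty)$, which is~(iii); for $t\ge t_0$ the single critical point at $0$ is a minimum, whence $u>0$ on $(0,\infty)$, which is~(iv). At the origin, where $\phi_x=\phi_{xxx}=0$ by evenness, the Cole--Hopf identities read
\begin{equation*}
u_x(t,0)=\frac{\phi_{xx}(t,0)}{\phi(t,0)},\qquad u_{xxx}(t,0)=\frac{\phi_{xxxx}(t,0)}{\phi(t,0)}-3\left(\frac{\phi_{xx}(t,0)}{\phi(t,0)}\right)^{2},
\end{equation*}
while $u_{xx}(t,0)=0$ holds for all $t$ by oddness. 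Evaluating at $t_0$ gives $u_x(t_0,0)=0$, $u_{xx}(t_0,0)=0$, and $u_{xxx}(t_0,0)=\phi_{xxxx}(t_0,0)/\phi(t_0,0)$.

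\textbf{Sign of $u_{xxx}$ and main obstacle.} It remains to fix the sign. Consider $g(t):=\phi_{xx}(t,0)=\phi(t,0)\,u_x(t,0)$. While $N\equiv 3$ on $(0,t_0)$ all three zeros of $\phi_x$ are simple, so the origin is a \emph{nondegenerate} local maximum of $\phi$ and $g(t)<0$; for $t>t_0$ the origin is a minimum and $g(t)>0$; and $g(t_0)=0$. Differentiating in time and using the heat equation, $g'(t)=\phi_{xxt}(t,0)=\phi_{xxxx}(t,0)$, so the sign change of $g$ forces $g'(t_0)\ge 0$, while $\phi_{xxxx}(t_0,0)\neq0$ from the multiplicity-three statement gives $g'(t_0)\neq0$; hence $\phi_{xxxx}(t_0,0)=g'(t_0)>0$ and therefore $u_{xxx}(t_0,0)>0$. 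This verifies all assertions and shows that the hypotheses of Proposition~\ref{prop-pitchfork} hold at $(t_0,0)$, so the collision is the claimed pitchfork. The step I expect to be most delicate is the rigorous control of $N(t)$: extracting $N(t)=1$ for large $t$ from the $L^\infty$-convergence to $\tanh$ (using $u_0\mp1\in L^2(\R_\pm)$ and the decomposition near the stationary shock), and justifying via~\cite[Theorem~B]{ANG} that the coalescence at $t_0$ produces a zero of multiplicity exactly three at the origin---this is precisely what delivers both the nondegeneracy $\phi_{xxxx}(t_0,0)\neq0$ and, together with the monotone sign change of $g$, the strict inequality $u_{xxx}(t_0,0)>0$.
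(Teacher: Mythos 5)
Your overall strategy is close in spirit to the paper's: both proofs linearize via Cole--Hopf (you work directly with the heat potential $\phi$ with $u=\phi_x/\phi$; the paper first peels off $\tanh$ and applies Cole--Hopf to the perturbation, arriving at the equivalent potential $\cosh(x)+\chi(t,x)$ with $\chi_t=\chi_{xx}-\chi$). Where you genuinely diverge is in how $t_0$ and, above all, the nondegeneracy $u_{xxx}(t_0,0)>0$ are obtained. The paper never invokes large-time convergence or global zero counting: it shows that the structural property ``$x\mapsto\cosh(x)+\chi_{xx}(t,x)$ is increasing on $\R_+$'' is propagated by the flow, deduces from the explicit kernel representation that $t\mapsto\chi_{xx}(t,0)$ increases strictly to $0$, defines $t_0$ as the unique crossing time of $1+\chi_{xx}(t,0)$ through $0$, and reads off $u_{tx}(t_0,0)>0$, hence $u_{xxx}(t_0,0)>0$ from the equation. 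Your route via Sturm zero counting of $\phi_x$ is attractive, but it has a genuine gap at exactly the decisive point.

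The gap is the assertion that, because the total zero count drops from $3$ to $1$ and all three zeros meet at the origin, the zero of $\phi_x(t_0,\cdot)$ there has multiplicity \emph{exactly} three, i.e.\ $\phi_{xxxx}(t_0,0)\neq0$. The zero-counting/Weierstrass-preparation theory gives the opposite inequality: if $(t_0,0)$ is a zero of multiplicity $m$ of $\phi_x$, then at most $m$ zeros can converge to it as $t\to t_0^-$, so three colliding zeros only force $m\geq 3$. Nothing in \cite[Theorem~B]{ANG} (or in parity) excludes $\phi_{xxxx}(t_0,0)=0$ with, say, $\phi_{xxxxxx}(t_0,0)\neq0$, in which case only three of the ``five'' roots are real before $t_0$ and the count still drops from $3$ to $1$. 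Consequently your final step is circular: the sign change of $g(t)=\phi_{xx}(t,0)$ only yields $g'(t_0)=\phi_{xxxx}(t_0,0)\geq0$ (compare $g(t)=(t-t_0)^3$), and you promote this to a strict inequality by citing the very multiplicity statement that has not been proved. Since $u_{xxx}(t_0,0)>0$ is precisely the hypothesis needed to invoke Proposition~\ref{prop-pitchfork}, this is the substantive missing piece; the fix is essentially the paper's argument, namely showing that $\phi_{xx}(t,\cdot)$ is nondecreasing on $\R_+$ for all $t$ (propagated from the hypothesis on $u_0$ by the maximum principle for the odd function $\phi_{xxx}$), whence Hopf's lemma gives $\phi_{xxxx}(t,0)>0$ for all $t>0$. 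The remaining gaps in your write-up --- upgrading the $L^\infty$-convergence to $\tanh$ to $C^1$ control near $x=0$ to get $N(t)=1$ eventually, ruling out that the outer zeros escape to infinity as $t\to t_0^-$, and the preservation of the limits $\pm1$ --- are real but routine; the multiplicity claim is the one that does not go through as stated.
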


\begin{remark}
	For $u \in C^{\infty}((0,\infty) \times \R,\R)$ and $\xi \in C^{\infty}((0,t_0),\mathbb{R}_+)$ in Proposition~\ref{prop-Burgers} we obtain the identities~\eqref{claim-xi},~\eqref{claim-xi-single},~\eqref{claim-der} and~\eqref{claim-der-single}, since the assumptions of Proposition~\ref{prop-pitchfork} are satisfied.
\end{remark}

\begin{proof}[Proof of Proposition~\ref{prop-Burgers}.]
We use the decomposition of $u$ at the stationary shock-wave solution $x \mapsto \tanh(x)$ of $0 = 2 u u_x + u_{xx}$ and write
\begin{equation}
\label{decomp-u}
u(t,x) = \tanh(x) + v(t,x).
\end{equation}
The perturbation $v$ (which is not necessarily small) satisfies 
\begin{equation}
\label{v-eq}
v_t = v_{xx} + 2 v v_x + 2(\tanh(x) v)_x.
\end{equation}
This nonlinear equation can be linearized with the Cole-Hopf transformation 
\begin{equation}
\label{decomp-v}
v(t,x) = \partial_x \log \psi(t,x).
\end{equation}
By substituting~(\ref{decomp-v}) into~(\ref{v-eq}), we obtain the following linear advection-diffusion equation 
\begin{equation}
\label{adv-diff}
\psi_t = \psi_{xx} + 2 \tanh(x) \psi_x.
\end{equation}
We are looking for a solution of~(\ref{adv-diff}) which is bounded away from zero by a positive constant. Without loss of generality, this constant can be normalized to unity, so that we can look for a solution of the form 
\begin{equation}
\label{decomp-psi}
\psi(t,x) = 1 + \hat{\psi}(t,x), \quad \hat{\psi}(t,\cdot) \in H^2(\R), \quad t \in \mathbb{R}_+.
\end{equation}
To obtain the exact solution of~(\ref{adv-diff}), we write 
\begin{equation}
\label{decomp-psi-hat}
\hat{\psi}(t,x) = {\rm sech}(x) \chi(t,x) 
\end{equation}
and obtain the linear diffusion equation with constant dissipation for $\chi$:
\begin{equation}
\label{eq-chi}
\chi_t = \chi_{xx} - \chi.
\end{equation}
The solutions of this linear equation are given by
\begin{equation}\label{sol-chi}
\chi(t,x) = \frac{\re^{-t}}{\sqrt{4\pi t}} \int_{\R} \chi_0(y) \re^{-\frac{(x-y)^2}{4t}} \de y,
\end{equation}
where $\chi_0 := \chi(0,\cdot)$ denotes the initial condition. The associated solution of the Burgers' equation~(\ref{adv-diff}) is then obtained from~(\ref{decomp-u}),~(\ref{decomp-v}), 
(\ref{decomp-psi}), and~(\ref{decomp-psi-hat}) in the form:
\begin{equation}
\label{sol-u}
u(t,x) = \frac{\sinh(x) + \chi_x(t,x)}{\cosh(x) + \chi(t,x)},
\end{equation}
where $\chi(t,x)$ is given by~(\ref{sol-chi}).

If $\chi_0 \in C^{\infty}(\R) $ satisfies $1 + \chi_0''(0) < 0$ and $\chi_0(0) > 0$, then 
$$
u_0'(0) = (1 + \chi_0''(0))/(1+\chi_0(0)) < 0,
$$ 
so that there exists a root $x_0 \in \R_+$ of $u_0$. The positive root $x_0$ must be unique by the assumptions on $u_0$. Thus, we find by~\eqref{decomp-u},~\eqref{decomp-v},~\eqref{decomp-psi} and~\eqref{decomp-psi-hat} that the assumptions on $u_0$ are in one-to-one correspondence with the class of even functions $\chi_0 \in C^{\infty}(\R)$ such that ${\rm sech}(\cdot) \chi_0 \in H^2(\R)$ and 
\begin{itemize}
	\item $\chi_0(x) > 0$ for all $x \in \R$, 
	\item $x \mapsto \cosh(x) + \chi_0''(x)$ is monotonically increasing on $\R_+$ with $1 + \chi_0''(0) < 0$.
\end{itemize}
Now take such $\chi_0 \in C^\infty(\R)$. Then, $\cosh(x) + \chi_0(x) > 0$ for all $x \in \R$ and 
$\sinh(x) + \chi_0'(x)$ has a single root $x_0 \in (0,\infty)$. 
Since $\chi_0$ is even, so is $\chi \in C^{\infty}((0,\infty) \times \R,\R)$, which implies that $u(t,\cdot) \in C^{\infty}((0,\infty) \times \R,\R)$ is spatially odd, so that (ii) holds. Furthermore, 
${\rm sech}(\cdot) \chi_0 \in H^2(\R)$ ensures by~\eqref{sol-chi} that 
${\rm sech}(\cdot) \chi(t,\cdot) \in H^2(\R)$ for all $t \geq 0$. 
Since $\hat{\psi}(t,\cdot) \in H^2(\R)$ for all $t \geq 0$,  we have from~(\ref{decomp-u}),~(\ref{decomp-v}), and~(\ref{decomp-psi}) that 
$\lim_{x \to \pm \infty} v(t,x) = 0$ and $\lim_{x \to \pm \infty} u(t,x) = \pm 1$, so that (i) holds. 

It follows from the exact solution~(\ref{sol-chi}) that for every $t \geq 0$, we have $\chi(t,x) > 0$ for all $x \in \R$ and $x \mapsto \cosh(x) + \chi_{xx}(t,x)$ is monotonically increasing on $(0,\infty)$. 
Hence, $\cosh(x) + \chi(t,x) > 0$ for all $x \in \R$ and $\sinh(x) + \chi_x(t,x)$ has a single root $\xi(t) \in (0,\infty)$ for $t \in [0,t_0)$ 
as long as $1 + \chi_{xx}(t,0) < 0$. Since 
$$
\chi_{xx}(t,0) = \frac{\re^{-t}}{\sqrt{4\pi t}} \int_{\R} \chi_0''(y) \re^{-\frac{y^2}{4t}} \de y
$$
and ${\rm sech}(\cdot) \chi_0 \in H^2(\R)$, the mapping $t \mapsto \chi_{xx}(t,0)$ is monotonically increasing from a negative value 
$\chi_0''(0) < -1$ towards $0$ as $t \to +\infty$. Hence, there exists a unique time $t_0 \in \R_+$ such that $1 + \chi_{xx}(t,0)$ crosses $0$ at $t = t_0$ and becomes positive for $t > t_0$ so that (iii) and (iv) hold. 

Let us now show the non-degeneracy assumption at $t = t_0$ for which 
$u_x(t_0,0) = 0$. Since the solution is smooth and spatially odd, we also 
have $u_{xx}(t_0,0) = 0$. Since the mapping $t \mapsto \chi_{xx}(t,0)$ is monotonically increasing and $t \mapsto \chi(t,0)$ is monotonically decreasing, then $t \mapsto u_x(t,0)$ is monotonically increasing,
where 
$$
u_x(t,0) = \frac{1 + \chi_{xx}(t,0)}{1 + \chi(t,0)}.
$$
Thus, $u_{tx}(t_0,0) > 0$ and the Burgers' equation in~(\ref{Burgers-quad}) implies $u_{xxx}(t_0,0) > 0$.
\end{proof}

\begin{figure}[htb]
	\centering
	\includegraphics[height=6cm,width=10cm]{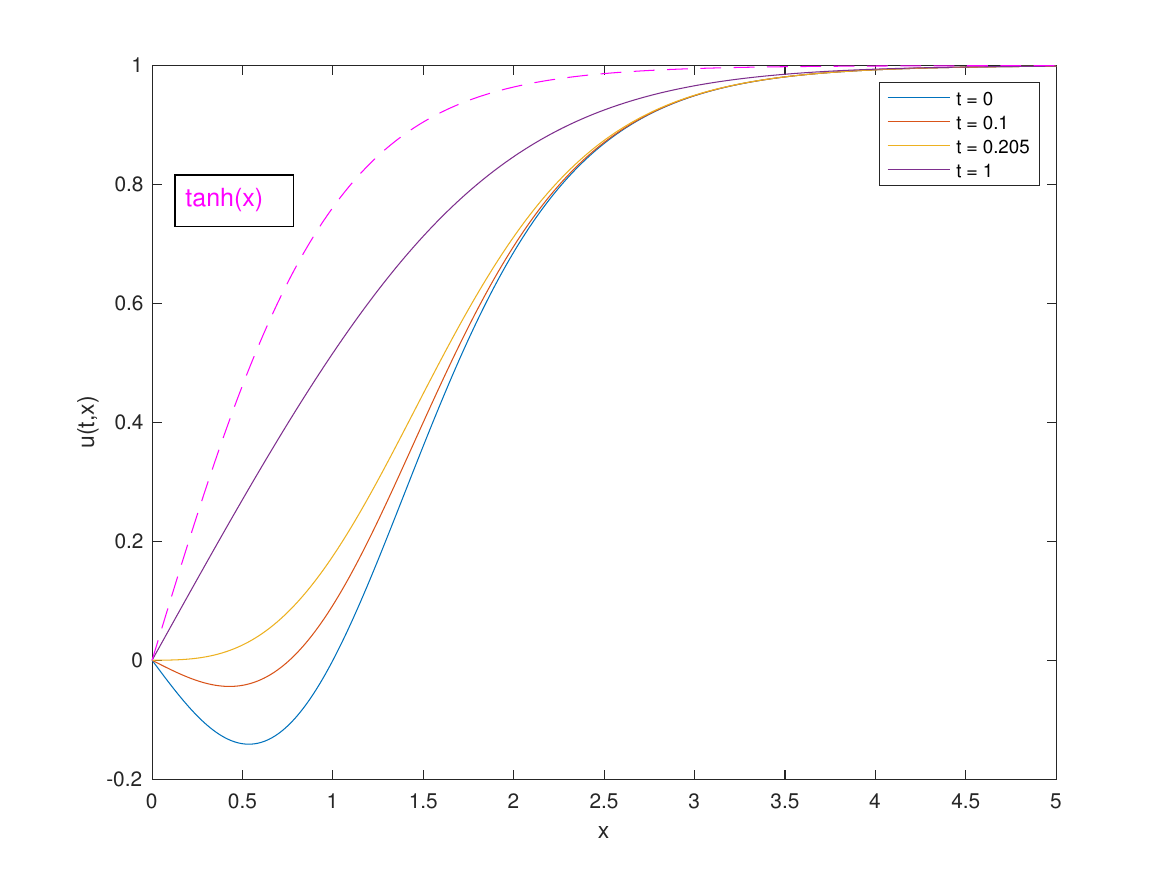}
	\caption{An illustration of the exact solution to the Burgers' equation~(\ref{Burgers-quad}) obtained by means of~(\ref{sol-chi}) and~(\ref{sol-u}). The initial condition for~(\ref{sol-chi}) 
is set as $\chi_0(x) := \cosh^2(1) {\rm sech}(x)$ so that 
the initial condition $u_0$ for~(\ref{Burgers-quad}) has a positive zero at $x = 1$. The integration 
of the exact solution in~(\ref{sol-chi}) was executed by using a numerical integration package. The root $\xi(t)$ of $u(t,\cdot)$ on $(0,\infty)$ exists for $t \in [0,t_0)$, coalesces at $0$ at $t = t_0$ and disappears for $t > t_0$, where $t_0 \approx 0.205$. The solution $u(t,x)$ approaches the stationary shock wave $x \mapsto \tanh(x)$ as $t \to \infty$, which is represented by the dashed line.}
	\label{fig:Burgers}
\end{figure}

\section{Numerical simulations in the modular Burgers' equation} \label{sec:numerics}
 
Here we report on numerical simulations in the viscous Burgers' equation with modular nonlinearity. The associated initial value problem reads
\begin{equation}
\label{Burgers-modular}
\left\{ \begin{array}{lll} 	u_t = u_{xx} + \lvert u \rvert_x, \quad & t > 0, \quad & x \in \R, \\
u(0,x) = u_0(x), \quad & \quad & x \in \R. \end{array} \right.
\end{equation}
Numerical computations in~\cite{PEDR} implemented the finite-difference 
method for spatially odd solutions of~(\ref{Burgers-modular}), see Lemma~\ref{lemma-initial}, 
for which the initial-value problem~(\ref{Burgers-modular}) can be closed on the half-line $[0,\infty)$ subject to a Dirichlet boundary condition at $x=0$. The jump condition ~(\ref{interface-con}) was used at $x = 0$ as well as at $x = \pm \xi(t)$. The three interfaces were transformed to  time-independent grid points after a scaling transformation. 

We will confirm the scaling law~(\ref{law-pitchfork1}) of the finite-time extinction of multiple interfaces in the initial-value problem~(\ref{Burgers-modular}). Compared to the previous numerical simulations in~\cite{PEDR}, we use a regularization for the modular nonlinearity, for which 
the finite-difference method can be implemented without any additional equations for the interface dynamics. The numerical data is extracted from zeros of the solution $u(t,\cdot)$ on $(0,\infty)$ to determine the power of the scaling law of the interface coalescence. 

\subsection{Regularization}

The modular Burgers' equation can be rewritten as 
\begin{equation}
\label{Burgers-modular-2}
u_t =  u_{xx} + {\rm sgn}(u) u_x,
\end{equation}
where ${\rm sgn}(u)$ has a jump discontinuity at $u=0$. To smoothen out the jump, we define the following smooth nonlinearity for $\varepsilon > 0$,
\begin{equation*}
f'_\varepsilon(u):=\frac{u}{\sqrt{\varepsilon^2 + u^2}}.
\end{equation*}
We have $f'_\varepsilon(u) \to {\rm sgn}(u)$ as $\varepsilon \to 0$ for all $u\in \mathbb{R}$, i.e.~$f_\varepsilon'(u)$ converges pointwise to $\mathrm{sgn}(u)$. This yields the regularized equation
\begin{equation}
\label{Burgers-regularized}
u_t = u_{xx} + \frac{u}{\sqrt{\varepsilon^2 + u^2}} u_x.
\end{equation}
We consider initial data $u(0,x) = u_0(x)$ for shock and anti-shock waves with the boundary condition $u_0(x) \to u_{\pm}$ as $x \to \pm \infty$, where $u_\pm$ have opposite signs. The case of $u_- < 0 < u_+$ includes a monotone, steadily traveling shock wave, to which the evolution of small exponentially decaying perturbations converges~\cite{PELI}. The anti-shock case of $u_- > 0 > u_+$ does not admit any steadily traveling shock-wave solutions.

For the simulation of shock-wave solutions with the normalized asymptotic limits $u_{\pm} = \pm 1$, we take the following initial condition:
\begin{equation}
\label{phi_2}
u_0(x) = \tanh(x) \left( 1-\re^{\alpha(1-x^2)} \right),
\end{equation}
where $\alpha > 0$ is a free parameter. The parameter $\alpha >0$ can be used to construct slopes of the initial data at $x=1$. For the simulation of anti-shock wave solutions with the normalized asymptotic limits $u_{\pm} = \mp 1$, we take the negative version of~(\ref{phi_2}), that is, 
\begin{equation}
\label{-phi_2}
u_0(x)=-\tanh(x) \left( 1-\re^{\alpha(1-x^2)} \right).
\end{equation}
Both in ~(\ref{phi_2}) and~(\ref{-phi_2}), the convergence 
of $u_0(x) \to u_{\pm}$ as $x \to \pm \infty$ is exponentially fast.

\subsection{Finite-difference method}

We rewrite the regularized Burgers' equation~(\ref{Burgers-regularized}) 
in the equivalent form,
\begin{equation}
\label{Regularized-Burgers}
u_t = u_{xx} + f_{\varepsilon}(u)_x, 
\end{equation}
with $f_{\varepsilon}(u) = \sqrt{\varepsilon^2 + u_\varepsilon^2}-\varepsilon$.

We will use the Crank-Nicolson method based on the trapezoidal rule to set up our numerical simulations for the equation~(\ref{Regularized-Burgers}). For the numerical discretization, we first define the spatial domain $[0,L]$ partitioned into $(N+1)$ grid points with spatial step $h$ and the time domain $[0,T]$ partitioned into $M$ grid points with time step $\tau$. We let $x_n$ for $0 \le n \le N$ be the spatial grid point and $t_m$ for $0 \le m \le M$ be the temporal grid point. We impose a Dirichlet condition at $x=0$ which yields $u_0^m = 0$ and a Neumann condition at $x=L$. By using the virtual grid point $x_{N+1} > L$, the Neumann condition reads $u_{N+1}^m = u_{N-1}^m$.

The Crank-Nicolson method is based on the discretization rule,
\begin{multline}
u^{m+1} = u^m + \frac{\tau}{2h^2} \left[ u_{n+1}^m -2u_n^m + u_{n-1}^m + u_{n+1}^{m+1} - 2u_n^{m+1} + u_{n-1}^{m+1} \right] \\
+ \frac{\tau}{4h} \left[ f_\varepsilon (u_{n+1}^m) - f_\varepsilon (u_{n-1}^m) + f_\varepsilon (u_{n+1}^{m+1}) - f_\varepsilon (u_{n-1}^{m+1}) \right]. \notag
\end{multline}
We need to solve $N$ equations for $N$ unknowns $\{ u_n^{m+1} \}_{n=1}^N$ at each $0 \le m \le M-1$. Hence, we rearrange the discretization scheme to get the unknown variables on the left and the known variables on the right as
\begin{align}
    \begin{split}
&u_n^{m+1} + \frac{\tau}{h^2}u_n^{m+1} - \frac{\tau}{2h^2} \left( u_{n+1}^{m+1} + u_{n-1}^{m+1} \right) - \frac{\tau}{4h} \left[ f_\varepsilon (u_{n+1}^{m+1}) - f_\varepsilon (u_{n-1}^{m+1}) \right] = 
\\ 
&\qquad \qquad \qquad \qquad u_n^m + \frac{\tau}{2h^2} \left( u_{n+1}^m + u_{n-1}^{m} \right) - \frac{\tau}{h^2}u_n^m + \frac{\tau}{4h} \left[ f_\varepsilon (u_{n+1}^m) - f_\varepsilon (u_{n-1}^m) \right]. 
 \end{split} \label{Crank-Nicolson}
\end{align}
To simplify the expression, we use a predictor-corrector method (also known as Heun's method). The idea is to use the solution at an initial point, $u^m$, and to calculate an initial guess value of the next point $(u^*)^{m+1}$. Heun's method then improves this initial guess value using the trapezoidal rule to determine a better estimate of the next term $u^{m+1}$.

To represent the predictor-corrector method, we introduce two matrices:
\begin{equation*}
A_\pm =
\begin{bmatrix}
    1 \pm \frac{\tau}{h^2} & \mp \frac{\tau}{2h^2} & 0 & \cdots & 0 \\
    \mp \frac{\tau}{2h^2} & 1 \pm \frac{\tau}{h^2} & \mp \frac{\tau}{2h^2} & \cdots & \vdots \\
    \vdots & \mp \frac{\tau}{2h^2} & \ddots & \ddots\\
    \vdots & \vdots & \ddots & \ddots & \ddots \\
    \vdots & \vdots & \mp \frac{\tau}{2h^2} & 1 \pm \frac{\tau}{h^2} & \mp \frac{\tau}{2h^2} \\
    0 & 0 & \cdots & \mp \frac{\tau}{h^2} & 1 \pm \frac{\tau}{h^2}
\end{bmatrix},
\end{equation*}
where the elements of $A_{\pm}$ at the $(N,N-1)$ entry are doubled due to the Neumann condition $u_{N+1}^m=u_{N-1}^m$. We also represent the regularized terms in matrix vector notion,
\begin{equation*}
b(u^m) =
\begin{bmatrix}
    f_\varepsilon (u_2^m) \\
    f_\varepsilon (u_3^m) - f_\varepsilon (u_1^m) \\
    f_\varepsilon (u_4^m) - f_\varepsilon (u_2^m) \\
    \vdots \\
    f_\varepsilon(u_N^m) - f_\varepsilon (u_{N-2}^m) \\
    0
\end{bmatrix},
\end{equation*}
where we note that $f_\varepsilon (0)=0$ by construction of $f_\varepsilon$ and the Dirichlet condition, and $f_\varepsilon (u_{N+1}^m) - f_\varepsilon (u_{N-1}^m) = 0$ by the Neumann condition $u_{N+1}^m=u_{N-1}^m$. The correction step is computed 
from~(\ref{Crank-Nicolson}) by Euler's method as
\begin{equation}
\label{Euler-Predictor}
(u^*)^{m+1} = A_+^{-1} \left( A_- u^m + \frac{\tau}{2h}b(u^m) \right).
\end{equation}
The prediction step is computed from~(\ref{Crank-Nicolson}) by Heun's method as
\begin{equation}
\label{Heun-Corrector}
u^{m+1} = A_+^{-1} \left( A_- u^m + \frac{\tau}{4h}b(u^m) + \frac{\tau}{4h}b((u^*)^{m+1}) \right).
\end{equation}
We now extract the interface position $\xi(t_m)$ from $u^m$ at $t = t_m$ by finding the two adjacent grid points $x_n$ and $x_{n+1}$, where $u_n$ and $u_{n+1}$ are of opposite signs. By the straight line interpolation between $(x_n,u_n)$ and $(x_{n+1},u_{n+1})$, we obtain 
\begin{equation*}
u(x) = \left( \frac{u_{n+1} - u_n}{x_{n+1}-x_n} \right) (x - x_n) + u_n.
\end{equation*}
The value of $\xi(t_m)$ is obtained by finding the root of $u$ as 
\begin{equation}
\label{straight-line-interpolation}
\xi(t_m) = \frac{u_nx_{n+1} - u_{n+1}x_n}{u_n - u_{n+1}}.
\end{equation}

\subsection{Numerical simulations for shock waves}

We have performed iterations on the domain $[0,L]$ discretized with the grid size $h=0.01$. The time step was chosen to be $\tau = 0.0005$. Moreover, we took $\varepsilon = 10^{-16}$. 

\begin{figure}[htb!]
	\begin{subfigure}{0.49\textwidth}
		\includegraphics[scale=0.42]{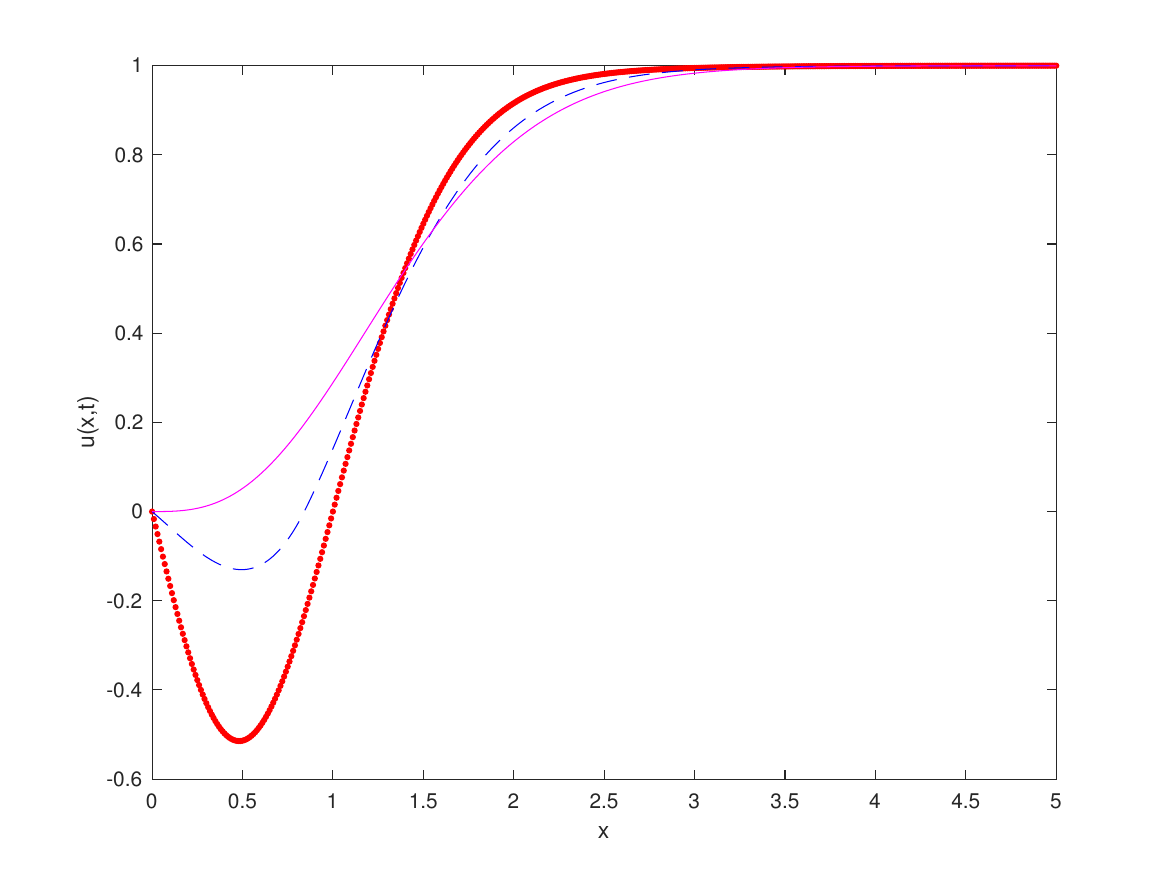}
	\end{subfigure}
	\hfill
	\begin{subfigure}{0.49\textwidth}
		\includegraphics[scale=0.42]{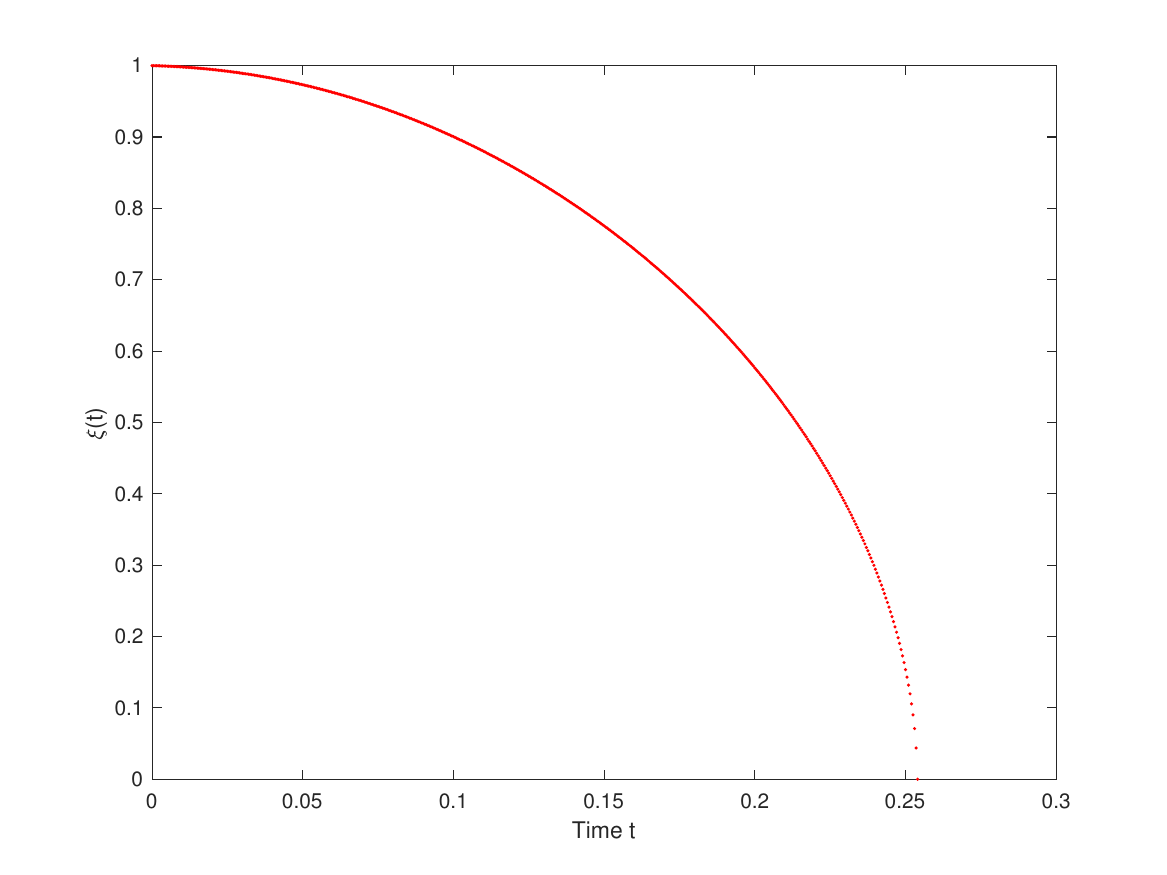}
	\end{subfigure}
	\caption{Evolution of~(\ref{Regularized-Burgers}) for the initial data~(\ref{phi_2}) with $\alpha = 1$. Left: $u(t,x)$ versus $x$ for times $t=0$, $t=0.126$, and $t=0.2535$. Right: evolution of $\xi(t)$ versus $t$.}
	\label{fig:phi_2}
\end{figure}

Figure~\ref{fig:phi_2} depicts the outcome of numerical simulations of the regularized approximation~(\ref{Regularized-Burgers}) of the modular Burgers' equation~\eqref{Burgers-modular-2} for the initial condition~(\ref{phi_2}) with $\alpha = 1$ for which we take $L = 5$. It is observed that $\xi(t)$ indeed goes to $0$ in finite time after which numerical computations can be continued. Yet, we stop them since we are only interested in the dynamics up to coalescence.  

We have also performed numerical simulations for the initial condition~(\ref{phi_2}) with $\alpha = 4$ shown in Figure~\ref{fig:parameter_4}. For these simulations, we have taken $L = 10$ to avoid the boundary effects from the Neumann boundary condition at $x = L$. With smaller values of $L$, the solution decays below $1$ at $x = L$ before the interface reaches $0$. Although the initial condition $u_0$ has larger negative parts on $[0,1]$, we observe that $\xi (t)$ still goes to $0$ in a finite time. Compared to Figure~\ref{fig:phi_2}, $\xi(t)$ is non-monotone as it first expands before it converges to $0$.

\begin{figure}[htb!]
\begin{subfigure}{0.49\textwidth}
    \includegraphics[scale=0.42]{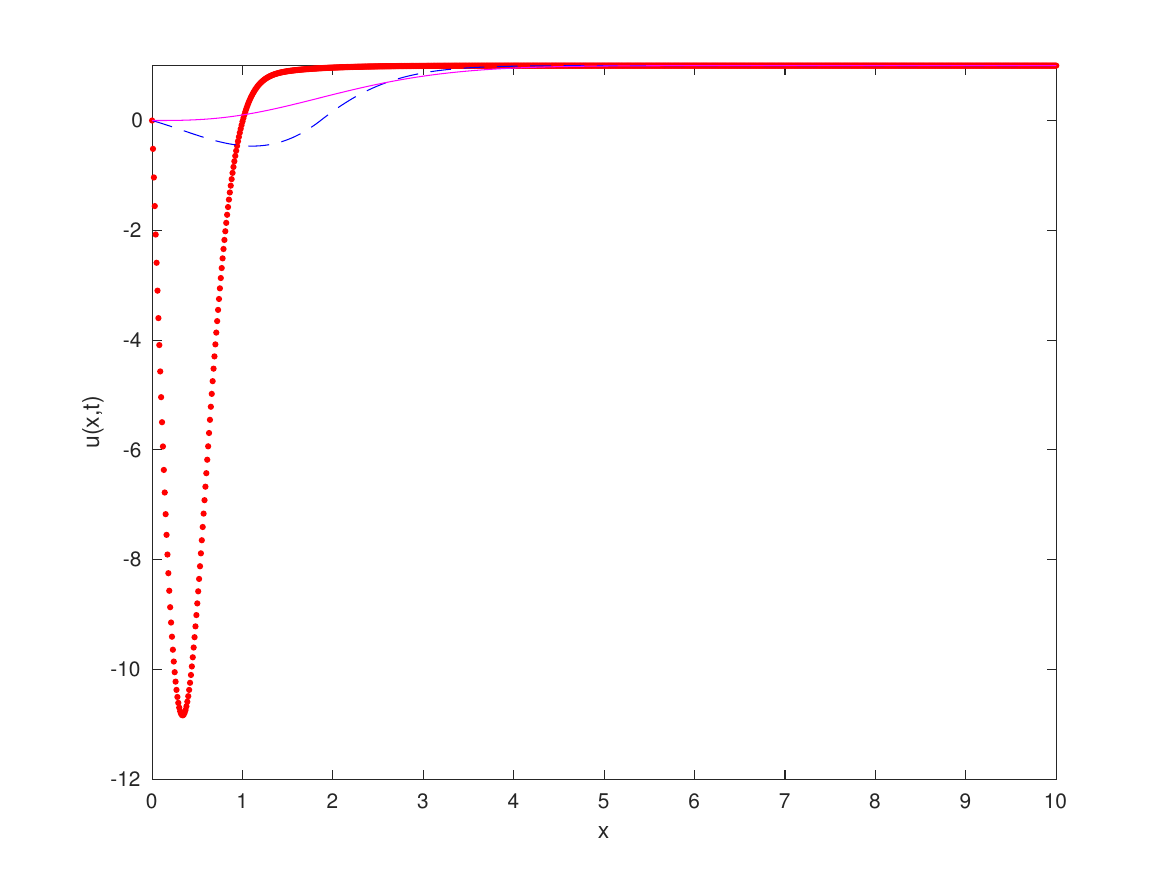}
\end{subfigure}
    \hfill
\begin{subfigure}{0.49\textwidth}
    \includegraphics[scale=0.42]{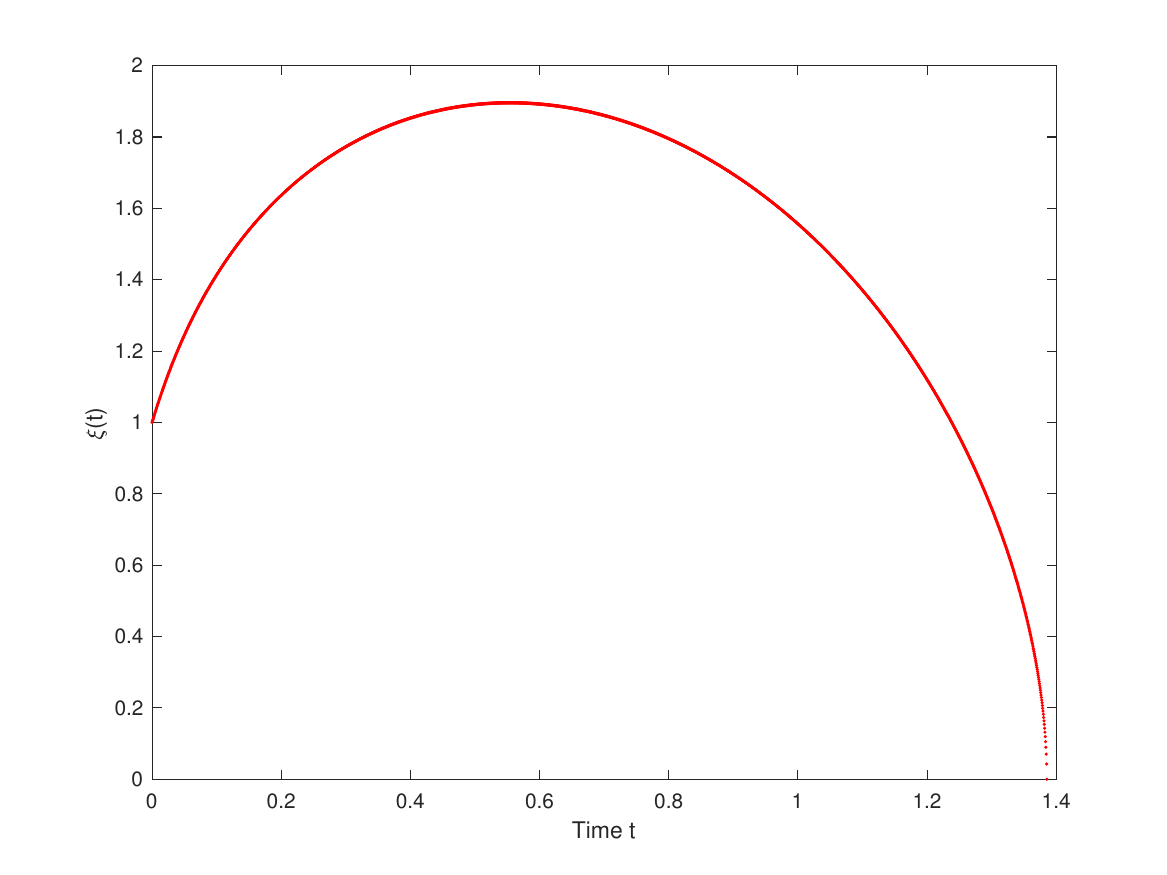}
\end{subfigure}
\caption{The same as in Figure~\ref{fig:phi_2} but with $\alpha =4$ and for times $t=0$, $t=0.692$, and $t=1.3285$. }
\label{fig:parameter_4}
\end{figure}

To confirm the scaling law~(\ref{law-pitchfork1}) of the interface coalescence,  we use linear regression in the log-log variable to approximate the associated power. That is, we consider
\begin{equation}
\label{scaling-law}
\log{\xi(t)} \text{ versus } c_1\log{(t_0 - t)} + c_2,
\end{equation}
where the coefficient $c_1$ represents the power of the scaling law. Note that the regression~(\ref{scaling-law}) depends on the unknown time $t_0$ of the interface coalescence. Thus, we first conduct computations for $t_0$ defined on a numerical grid and obtain the best fit by minimizing the approximation error.
 
 \begin{figure}[htb!]
 	\begin{subfigure}{0.49\textwidth}
 		\includegraphics[scale=0.42]{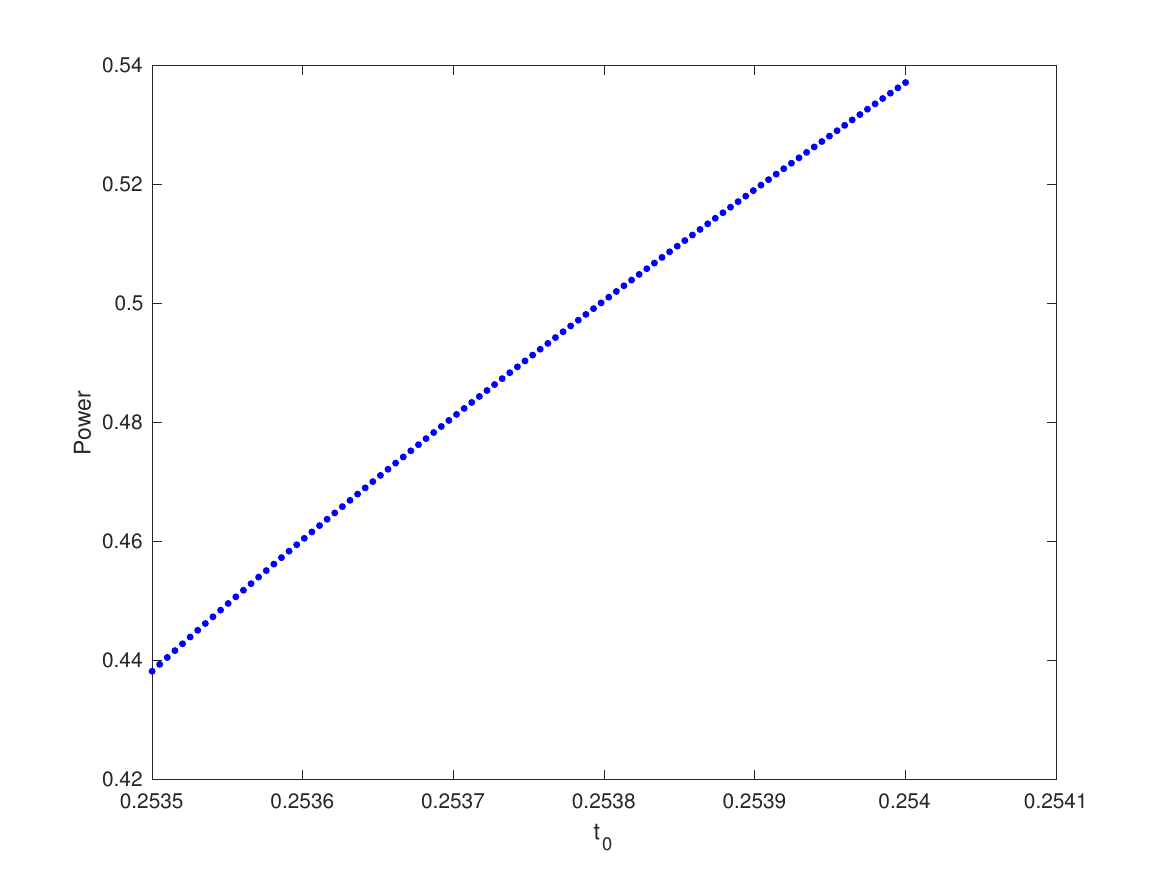}
 	\end{subfigure}
 	\hfill
 	\begin{subfigure}{0.49\textwidth}
 		\includegraphics[scale=0.42]{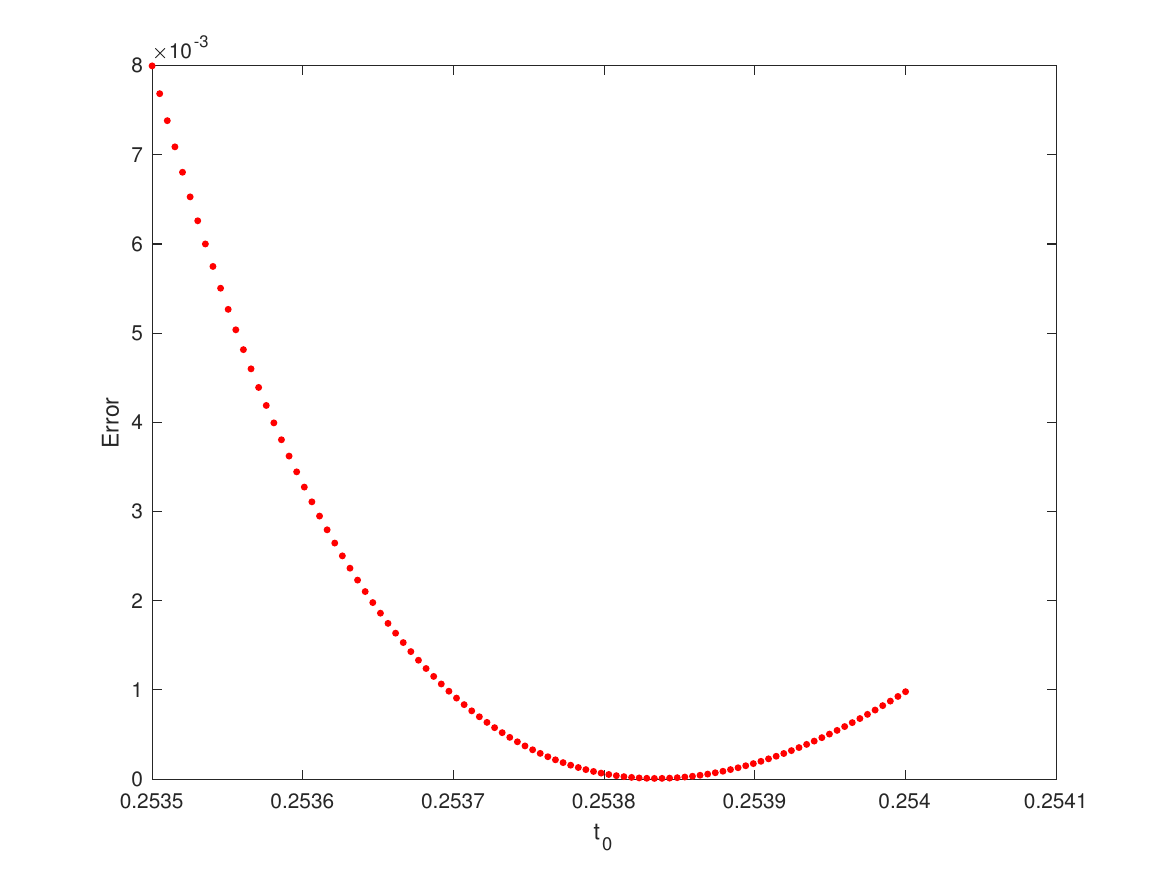}
 	\end{subfigure}
 	\caption{Left: power of the linear regression for Figure~\ref{fig:phi_2}. Right: approximation error versus $t_0$.}
 	\label{fig:xi_2}
 \end{figure}
 
 \begin{figure}[htb!]
 	\begin{subfigure}{0.49\textwidth}
 		\includegraphics[scale=0.42]{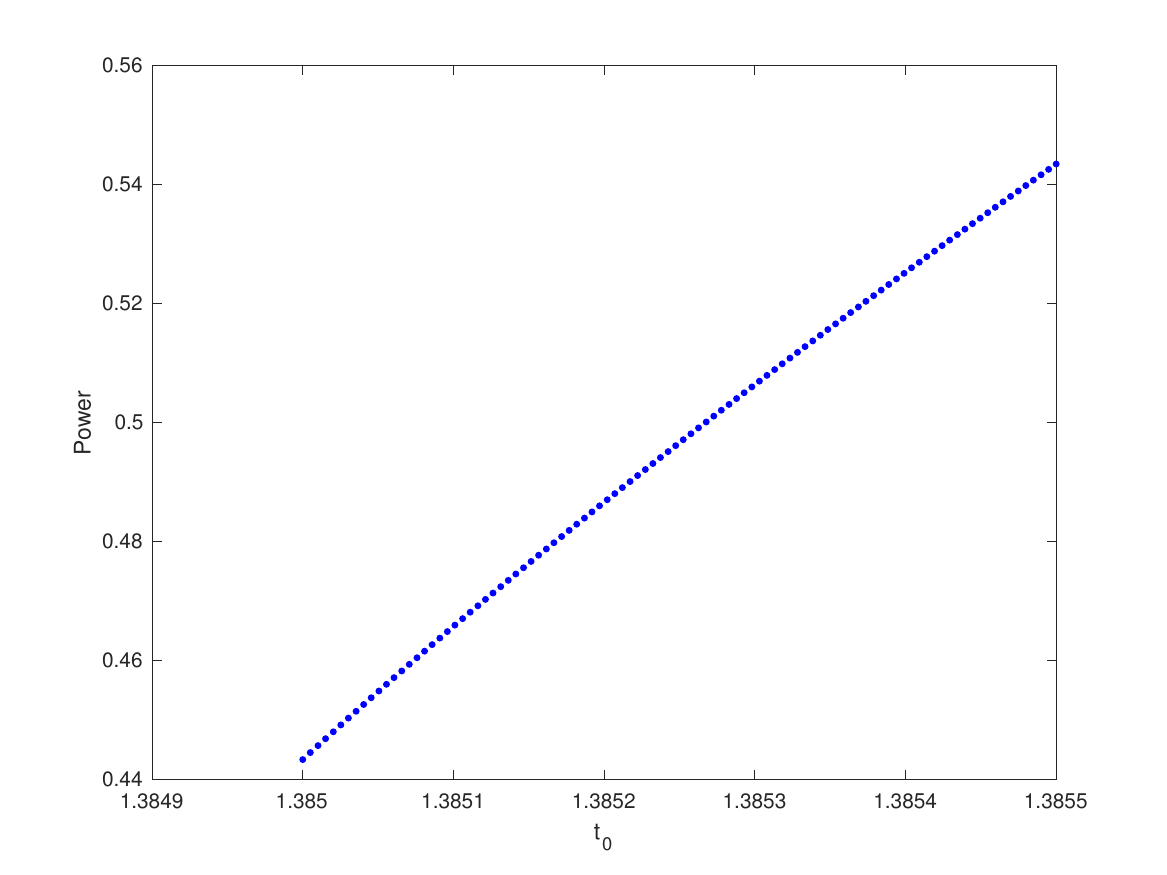}
 	\end{subfigure}
 	\hfill
 	\begin{subfigure}{0.49\textwidth}
 		\includegraphics[scale=0.42]{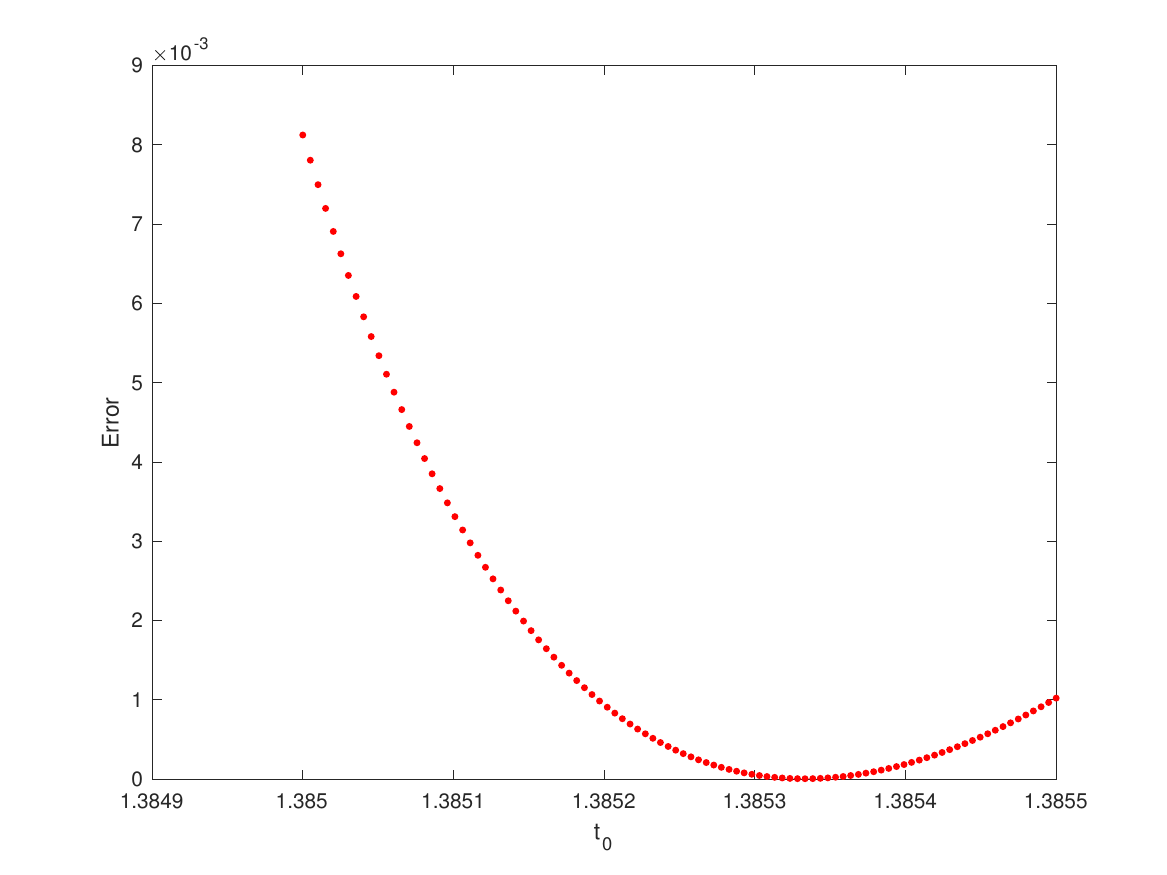}
 	\end{subfigure}
 	\caption{The same as in Figure~\ref{fig:xi_2} but for the data in Figure~\ref{fig:parameter_4}.}
 	\label{fig:parameter_4_xi}
 \end{figure}

The outcomes of these computations are depicted in Figures~\ref{fig:xi_2} and~\ref{fig:parameter_4_xi} for the approximations shown in Figures~\ref{fig:phi_2} and~\ref{fig:parameter_4}. The left panel shows the power versus $t_0$ and the right panel shows the corresponding approximation error versus $t_0$. The minimal error for $\alpha = 1$ is attained at $t_0=0.2538$ and this value of $t_0$ corresponds to $c_1=0.5068$. The minimal error for $\alpha = 4$ is attained at $t_0=1.3853$ and this value of $t_0$ corresponds to $c_1=0.5127$. In both cases, the power is close to the claimed value of $0.5$. We note that the time $t_0$ of extinction is larger for $\alpha =4$ than for $\alpha = 1$. 

\subsection{Numerical simulations for anti-shock waves}

We have also simulated~(\ref{Regularized-Burgers}) for the anti-shock wave initial condition~(\ref{-phi_2}). Figures~\ref{fig:-phi_2} and~\ref{fig:anti_parameter_4} depict the outcomes of numerical simulations for $\alpha = 1$ and $\alpha = 4$ respectively. For $\alpha = 1$, the interface position $\xi(t)$ goes to $0$ monotonically, similar to the computations in Figure~\ref{fig:phi_2}. For $\alpha = 4$, $\xi(t)$ first expands and then reduces towards $0$, similar 
to Figure~\ref{fig:parameter_4}.

\begin{figure}[htb!]
	\begin{subfigure}{0.49\textwidth}
		\includegraphics[scale=0.42]{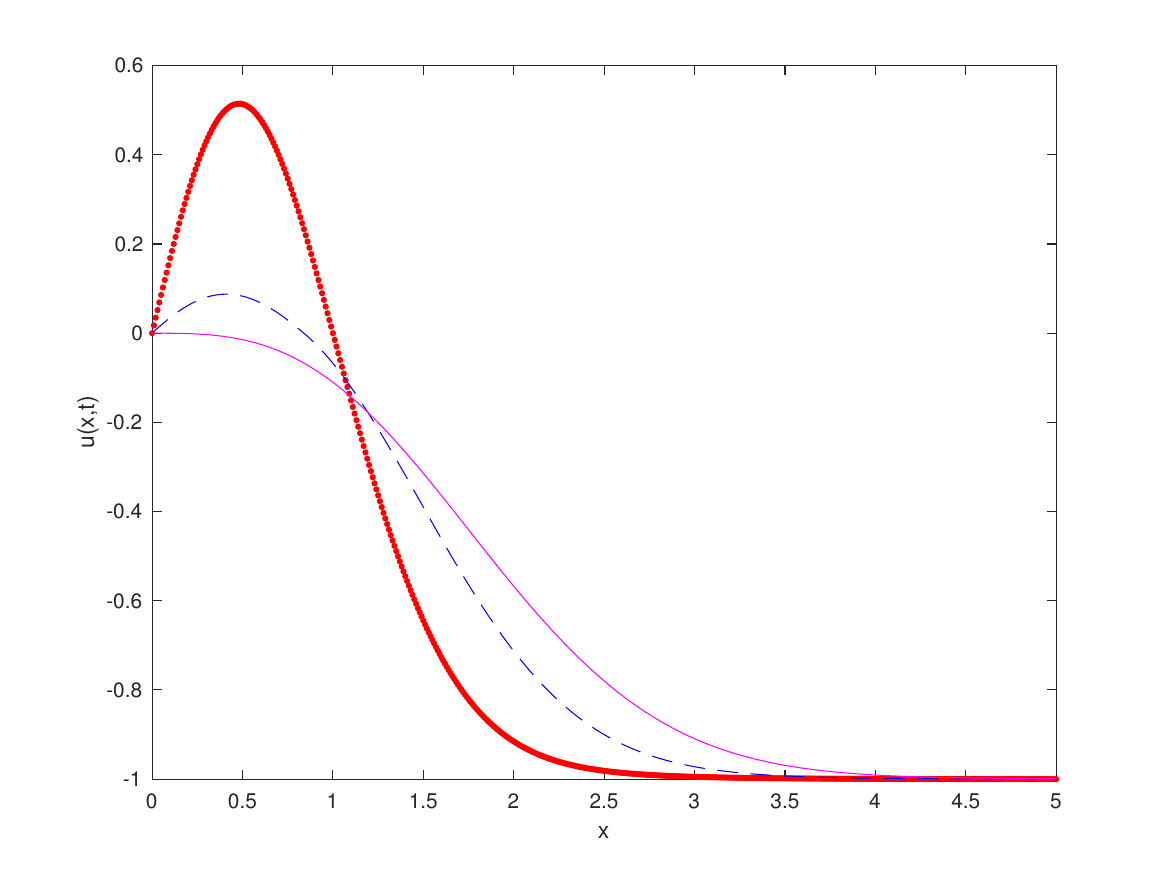}
	\end{subfigure}
	\hfill
	\begin{subfigure}{0.49\textwidth}
		\includegraphics[scale=0.42]{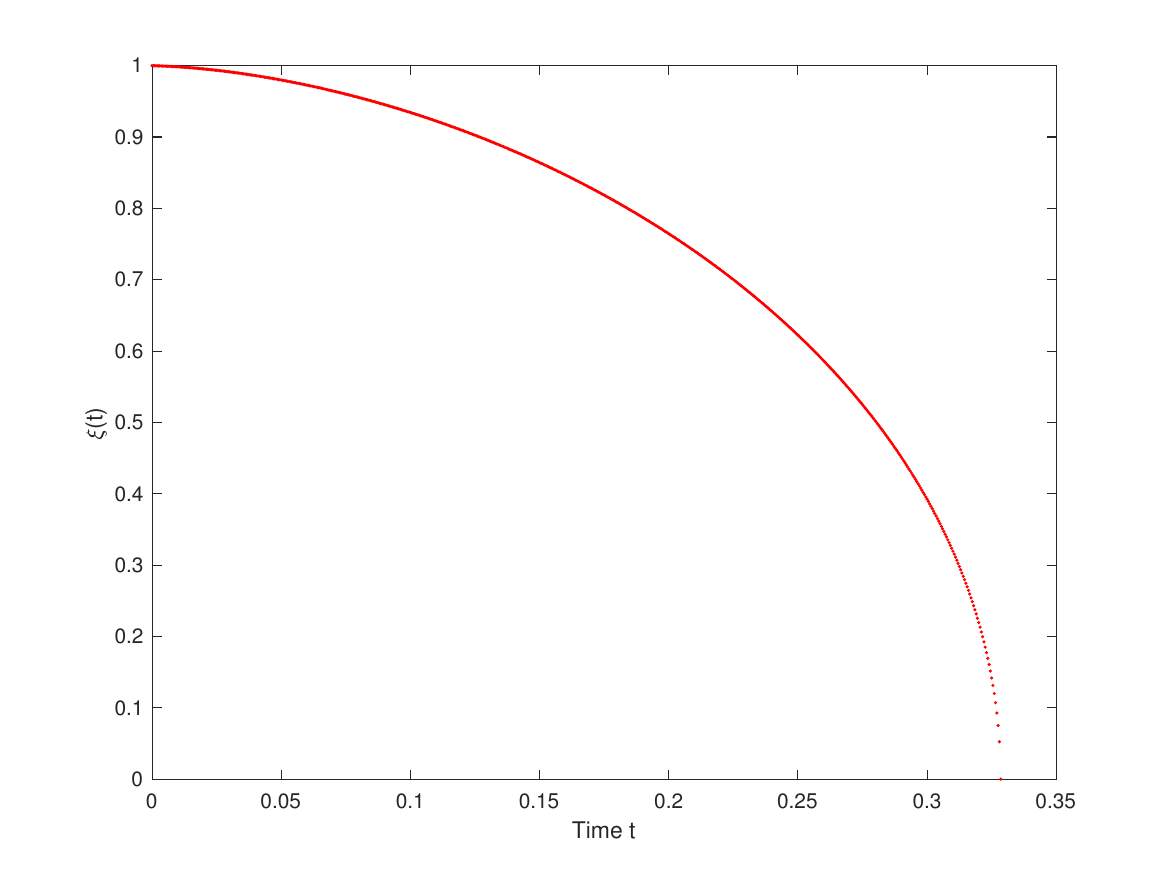}
	\end{subfigure}
	\caption{Evolution of~(\ref{Regularized-Burgers}) for the initial data~(\ref{-phi_2}) with $\alpha = 1$. Left: $u(t,x)$ versus $x$ for times $t=0$, $t=0.1635$, and $t=0.328$. Right: evolution of $\xi(t)$ versus $t$.}
	\label{fig:-phi_2}
\end{figure}
\begin{figure}[htb!]
	\begin{subfigure}{0.49\textwidth}
		\includegraphics[scale=0.42]{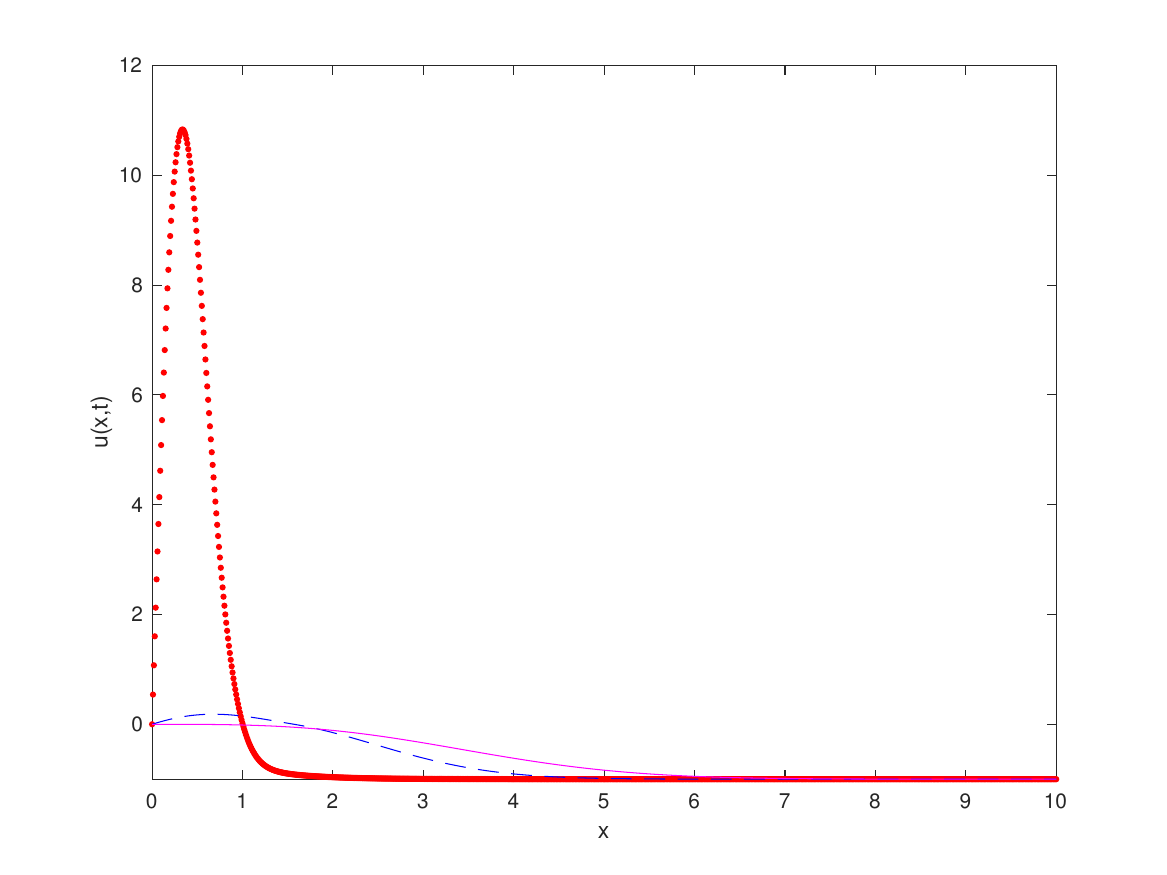}
	\end{subfigure}
	\hfill
	\begin{subfigure}{0.49\textwidth}
		\includegraphics[scale=0.42]{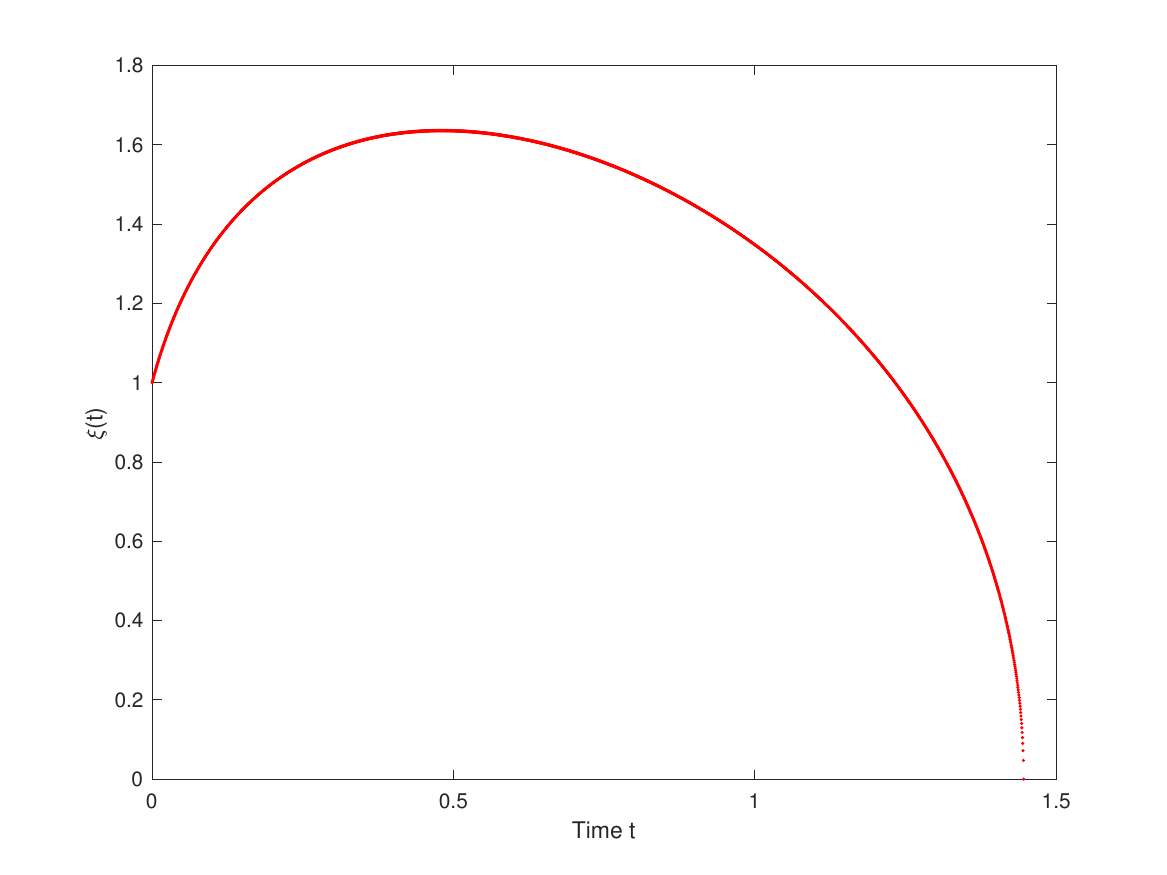}
	\end{subfigure}
	\caption{Tne same as in Figure~\ref{fig:-phi_2} but with $\alpha = 4$ and for times $t=0$, $t=0.7225$, and $t=1.4455$.}
	\label{fig:anti_parameter_4}
\end{figure}

Figures~\ref{fig:-xi_2} and~\ref{fig:anti_parameter_4_xi} show the approximate power of the scaling law and the approximation error versus $t_0$ for the simulations shown in Figures~\ref{fig:-phi_2} and~\ref{fig:anti_parameter_4}. The minimum error for $\alpha =1$ is attained at $t_0=0.3284$ and this value of $t_0$ corresponds to the power $c_1=0.4846$. The minimum error for $\alpha = 4$ is attained at $t_0=1.4459$ and this value of $t_0$ corresponds to $c_1=0.4884$. In both cases, the power is close to $0.5$ and thus, the scaling law~(\ref{law-pitchfork1}) is shown numerically to hold for anti-shock wave solutions considered here. However, the finite time of extinction is slightly larger for the anti-shock waves compared to that of the shock waves both for $\alpha = 1$ and $\alpha = 4$.

\begin{figure}[htb!]
\begin{subfigure}{0.49\textwidth}
    \includegraphics[scale=0.42]{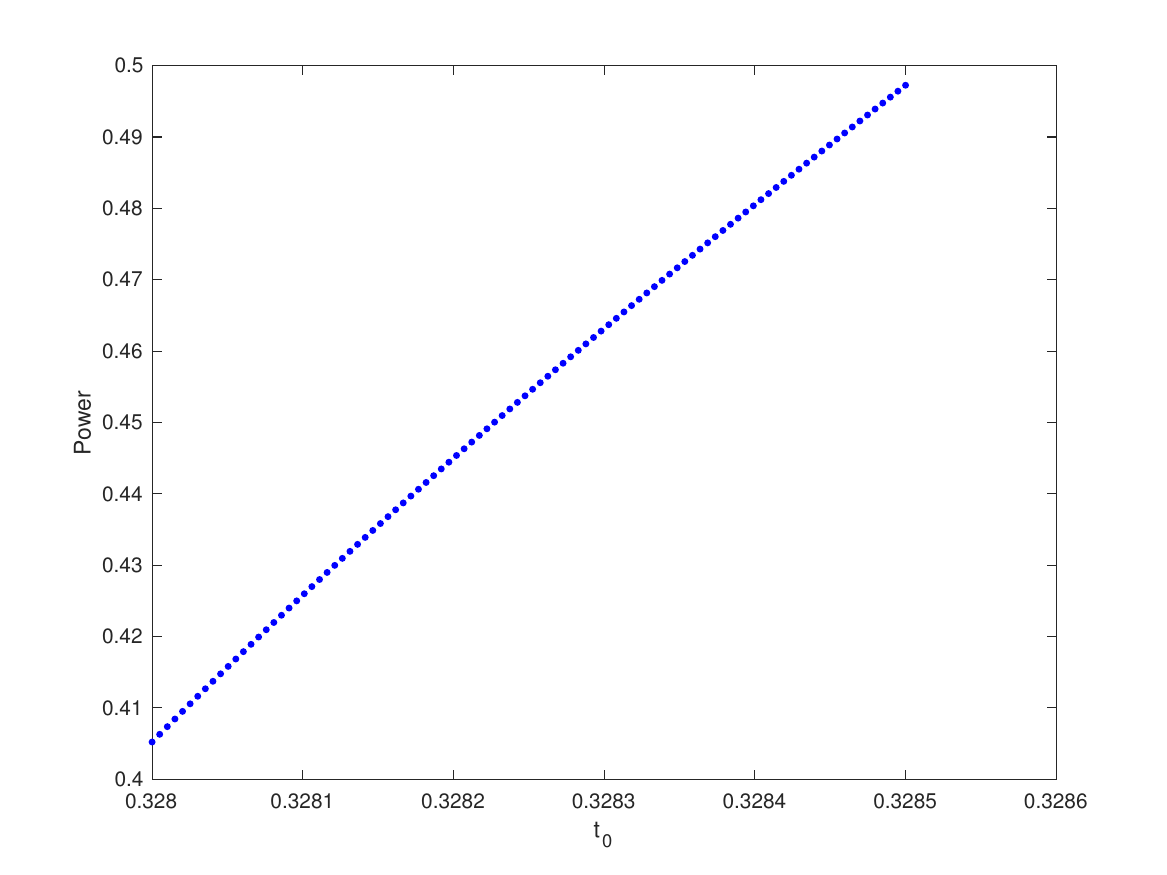}
\end{subfigure}
    \hfill
\begin{subfigure}{0.49\textwidth}
    \includegraphics[scale=0.42]{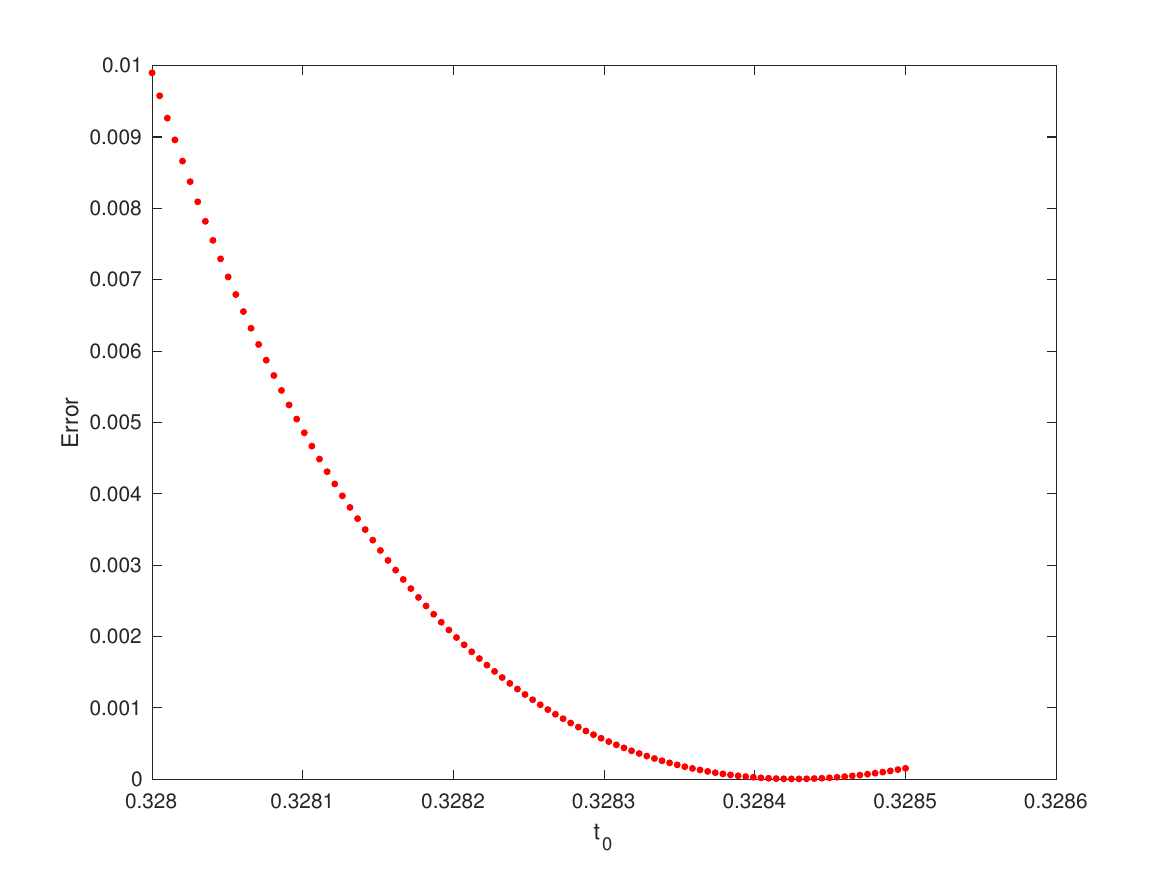}
\end{subfigure}
\caption{Left: power of the linear regression for Figure~\ref{fig:-phi_2}. Right: approximation error versus $t_0$.}
\label{fig:-xi_2}
\end{figure}

\begin{figure}[htb!]
\begin{subfigure}{0.49\textwidth}
    \includegraphics[scale=0.42]{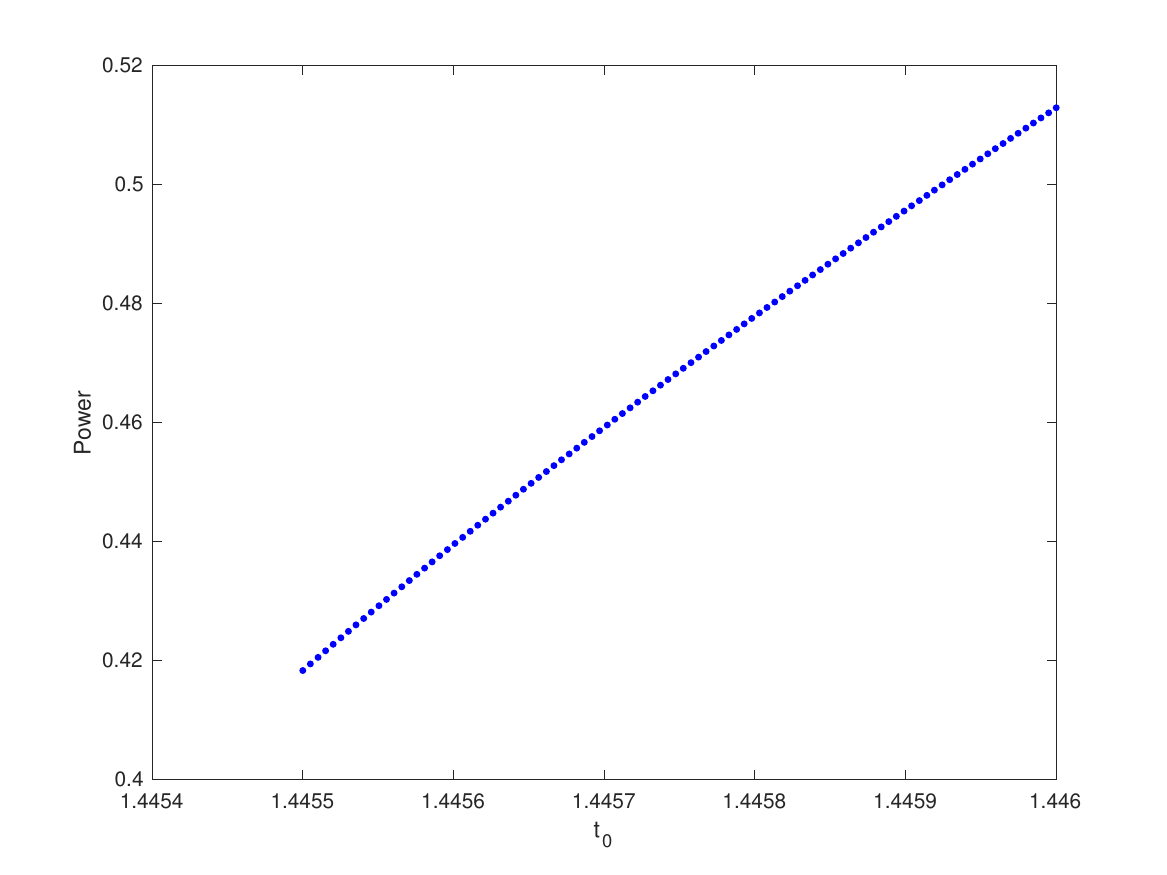}
\end{subfigure}
    \hfill
\begin{subfigure}{0.49\textwidth}
    \includegraphics[scale=0.42]{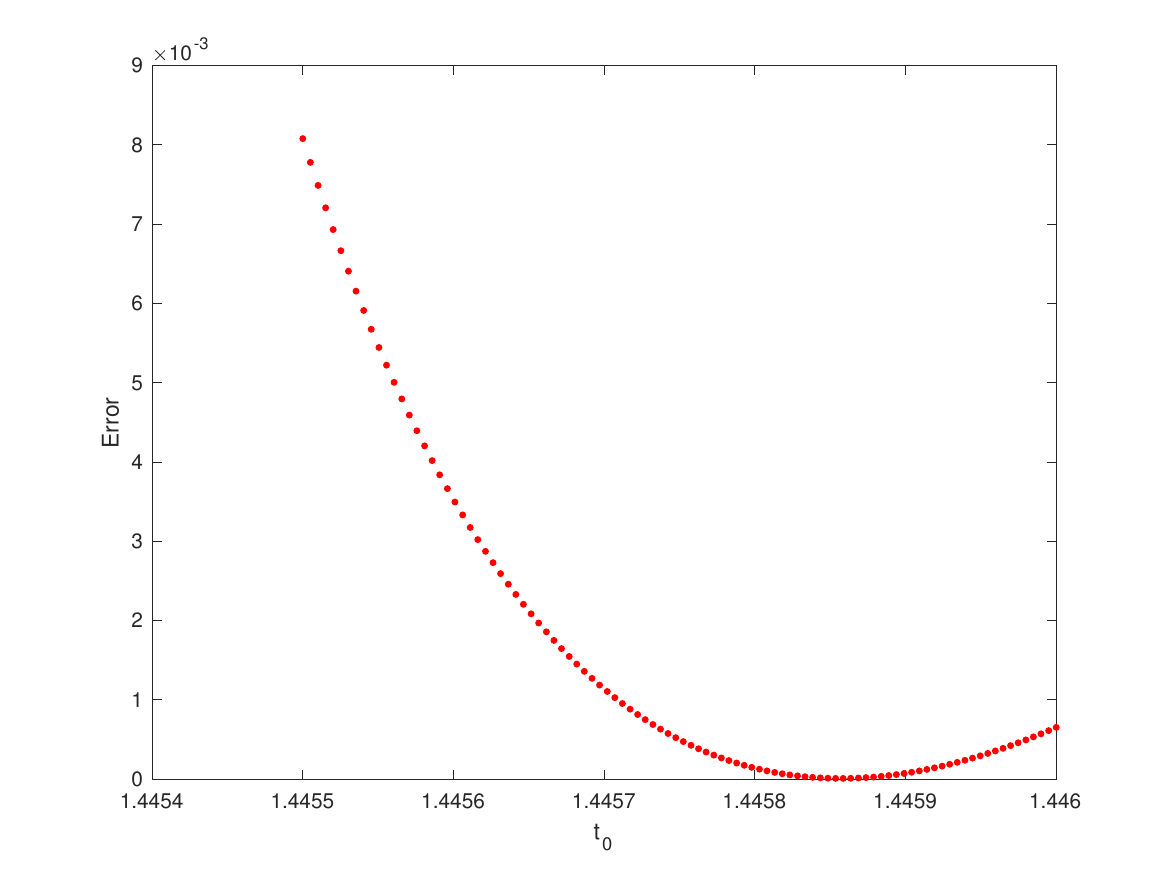}
\end{subfigure}
\caption{The same as in Figure~\ref{fig:-xi_2} but for the data in  Figure~\ref{fig:anti_parameter_4}.}
\label{fig:anti_parameter_4_xi}
\end{figure}

\appendix

\section{Proofs of well-posedness and approximation results} \label{app:A}

Here we provide proofs of the well-posedness and approximation results stated in~\S\ref{s:wellposedness}. Local well-posedness of the scalar viscous conservation law~\eqref{modBurgers}, as well as approximation by solutions of the regularized equation~\eqref{modBurgersApprox}, follows from standard theory for semilinear parabolic equations, cf.~\cite{LUN}, whereas global well-posedness relies on the comparison principle, cf.~\cite{PRWE,SERR}.

\begin{proof}[Proof of Lemma~\ref{p:globalsmooth}]
First, it is well-known that $\partial_x^2$ is a sectorial operator on $C_{\mathrm{ub}}(\R)$ with domain $C_{\mathrm{ub}}^2(\R)$ and there exists a constant $C>0$ such that 
\begin{align} \label{heatbounds} \|\partial_x^m \re^{\partial_x^2 t} u\|_\infty \leq Ct^{-\frac{m}{2}} \|u\|_\infty,\end{align} 
for $m = 0,1,2$, $t > 0$ and $u \in C_{\mathrm{ub}}(\R)$. Second, the map $N \colon C_{\mathrm{ub}}^1(\R) \to C_{\mathrm{ub}}(\R)$ given by $N(u) = f'(u) u_x$ is locally Lipschitz continuous since $f$ is smooth. Third, $C_{\mathrm{ub}}^1(\R)$ is an intermediate space of class $J_{1/2}$ between $C_{\mathrm{ub}}(\R)$ and $C_{\mathrm{ub}}^2(\R)$. Hence, it follows from standard analytic semigroup theory, cf.~\cite{LUN}, that there exist a maximal time $T \in (0,\infty]$ and a unique classical solution
\begin{align*}
u \in C\big([0,T),C_{\mathrm{ub}}^1(\R)\big) \cap C\big((0,T),C_{\mathrm{ub}}^2(\R)\big) \cap C^1\big((0,T),C_{\mathrm{ub}}(\R)\big),
\end{align*}
of~\eqref{modBurgers} with initial condition $u(0,\cdot) = u_0 \in C^1_{\mathrm{ub}}(\R)$. Moreover, if we have $T < \infty$, then it holds $\limsup_{t \to T^-} \|u(t,\cdot)\|_{W^{1,\infty}} = \infty$. A standard bootstrapping argument, using the fact that $f \in C^\infty(\R)$, then yields $\partial_t^k u(t,\cdot) \in C_{\mathrm{ub}}^l(\R)$ for any $k,l \in \mathbb{N}_0$ and $t \in [0,T)$ implying $u \in C^\infty\big((0,T) \times \R, \R\!\big)$.

It is well-known~\cite{PRWE,SERR} that the scalar conservation law~\eqref{modBurgers} obeys a comparison principle yielding $m_0 \leq u(t,\cdot) \leq M_0$ for all $t \in [0,T)$ upon comparison with the constant solutions $u \equiv m_0$ and $u \equiv M_0$ of~\eqref{modBurgers}. Differentiating the mild formulation of~\eqref{modBurgers}, we obtain 
\begin{align} 
\notag
u_x(t,\cdot) &= \re^{\partial_x^2 t} u_0' + \int_0^{t(1-\delta)} \partial_x^2 \re^{\partial_x^2(t-s)} f(u(s,\cdot)) \de s \\
\label{mildform6}
& \quad + \int_{t(1-\delta)}^t \partial_x \re^{\partial_x^2(t-s)} f'(u(s,\cdot)) u_x(s,\cdot) \de s ,
\end{align}
for $t \in [0,T)$, where $\delta \in (0,1)$ will be fixed a posteriori. Let $R \geq 1$ be such that 
$$
\sup\{|f(v)| + |f'(v)| : v \in [m_0,M_0]\} \leq R.
$$ 
Fix some $\tau \in [0,T)$. Taking norms in~\eqref{mildform6}, while using~\eqref{heatbounds} and the fact that $m_0 \leq u(t,\cdot) \leq M_0$, we establish
\begin{align*}
\|u_x(t,\cdot)\|_\infty &\leq C \|u_0\|_{W^{1,\infty}} + \int_0^{t(1-\delta)} \frac{CR}{t-s} \de s + \int_{t(1-\delta)}^t \frac{CR \sup\{\|u_x(s,\cdot)\|_\infty : s \in [0,\tau]\}}{\sqrt{t-s}} \de s \\
&\leq C\left(\|u_0\|_{W^{1,\infty}} + R |\log(\delta)| + 2 R \sqrt{\delta t} \sup\{\|u_x(s,\cdot)\|_\infty : s \in [0,\tau]\}\right)
\end{align*}
for all $t \in [0,\tau]$. Thus, setting $\delta = \frac{1}{16 C^2 R^2 \max\{1,\tau\}} \in (0,1)$ and taking suprema in the latter inequality, we arrive at
\begin{align*}
 \sup\{\|u_x(s,\cdot)\|_\infty : s \in [0,\tau]\} \leq 2C\left(\|u_0\|_{W^{1,\infty}} + R \log\left(16 C^2 R^2 \max\{1,\tau\}\right)\right),
\end{align*}
for all $\tau \in [0,T)$. We conclude that $\limsup_{t \to T^-} \|u(t,\cdot)\|_{W^{1,\infty}} = \infty$ cannot occur, implying that $T = \infty$ and the classical solution is global.
\end{proof}

\begin{proof}[Proof of Lemma~\ref{p:globalcub}]
Recall that $\partial_x^2$ is a sectorial operator on $C_{\mathrm{ub}}(\R)$ satisfying~\eqref{heatbounds}. In addition, the flux function $f \colon C_{\mathrm{ub}}(\R) \to C_{\mathrm{ub}}(\R)$ is locally Lipschitz continuous. Hence, by a standard fixed point argument as in the proofs of~\cite[Theorem~7.1.2 and Proposition~7.2.1]{LUN} there exist a maximal time $T \in (0,\infty]$ and a unique solution $u \in C\big([0,T),C_{\mathrm{ub}}(\R)\big)$ of~\eqref{mildform2}. Moreover, if $T < \infty$, then it holds $\limsup_{t \to T^-} \|u(t,\cdot)\|_\infty = \infty$. 

Let $\tilde{f} \in C^\infty(\R)$ be a function satisfying
\begin{align*}
 \sup\left\{\big|f(v)-\tilde{f}(v)\big| : v \in \left[-m_0,M_0\right]\right\} < \delta,
\end{align*}
for some $\delta > 0$. By Lemma~\ref{p:globalsmooth} there exists a unique global classical solution
$$
\tilde u \in C\big([0,\infty),C_{\mathrm{ub}}^1(\R)\big) \cap C\big((0,\infty),C_{\mathrm{ub}}^2(\R)\big) \cap C^1\big((0,\infty),C_{\mathrm{ub}}(\R)\big)
$$
of the integral equation 
\begin{align} \label{mildform4}
\tilde u(t,\cdot) = \re^{\partial_x^2 t} u_0 + \int_0^t \partial_x \re^{\partial_x^2(t-s)} \tilde f(\tilde u(s,\cdot)) \de s.
\end{align}
satisfying $m_0 \leq \tilde u(t,\cdot) \leq M_0$ for all $t \geq 0$. 
From~(\ref{mildform2}) and~(\ref{mildform4}), we obtain
\begin{align} \label{mildform3}
u(t,\cdot) - \tilde u(t,\cdot) = \int_0^t \partial_x \re^{\partial_x^2(t-s)} \left(f(u(s,\cdot)) - f(\tilde u(s,\cdot)) + f(\tilde u(s,\cdot)) - \tilde f(\tilde u(s,\cdot))\right) \de s,
\end{align}
for all $t \in [0,T)$. Denote by $L > 0$ the Lipschitz constant of $f$ on $[m_0-1,M_0+1]$. Taking norms in~\eqref{mildform3} we arrive at
\begin{align*}
\|u(t,\cdot) - \tilde u(t,\cdot)\|_\infty \leq C \int_0^t \frac{L\|u(s,\cdot) - \tilde u(s,\cdot)\|_\infty + \delta}{\sqrt{t-s}} \de s,
\end{align*}
for any $t \in [0,T)$ with $\sup\{\|u(s,\cdot) - \tilde{u}(s,\cdot)\|_\infty : s \in [0,t]\} \leq 1$. Hence, Gr\"onwall's Lemma~\cite[Lemma~7.0.3]{LUN} yields a constant $M > 0$, depending only on $C$ and $L$, such that
\begin{align} \label{deltaest}
\|u(t,\cdot) - \tilde u(t,\cdot)\|_\infty \leq M \delta \sqrt{t},
\end{align}
for all $t \in [0,T)$ with $\sup\{\|u(s,\cdot) - \tilde{u}(s,\cdot)\|_\infty : s \in [0,t]\} \leq 1$. 

We argue by contradiction and assume $T < \infty$. Take 
$$
0 < \delta \leq \frac{1}{2M\sqrt{T}}.
$$ 
If 
$$
\sup\{\|u(s,\cdot) - \tilde{u}(s,\cdot)\|_\infty : s \in [0,T)\} > 1,
$$ 
then by continuity, there must exist $t \in [0,T)$ with 
$$
\sup\{\|u(s,\cdot) - \tilde{u}(s,\cdot)\|_\infty : s \in [0,t]\} = 1.
$$ 
However,~\eqref{deltaest} then implies 
$$
\|u(s,\cdot) - \tilde u(s,\cdot)\|_\infty < \frac12
$$ 
for any $s \in [0,t]$, which yields a contradiction. Hence, we have 
$$
\sup\{\|u(s,\cdot) - \tilde{u}(s,\cdot)\|_\infty : s \in [0,T)\} \leq 1
$$ 
and~\eqref{deltaest} is satisfied for all $t \in [0,T)$. So, we must have $T = \infty$ and $u(t,\cdot)$ is global. 

Since it holds $m_0 \leq \tilde u(t,\cdot) \leq M_0$ for all $t \geq 0$ and, in addition, $\delta > 0$ can be chosen arbitrarily small, it follows $m_0 \leq u(t,\cdot) \leq M_0$ for all $t \geq 0$ by~\eqref{deltaest}, which concludes the proof of~(\ref{approxub}).
\end{proof}

\begin{proof}[Proof of Lemma~\ref{p:approxcub1}]
First, Lemma~\ref{p:globalcub} implies that $m_0 \leq u(t,\cdot) \leq M_0$ for all $t \geq 0$. Second, there exists by Lemma~\ref{p:globalcub} constants $\smash{\widetilde{M}}, \tilde{\delta}_0 > 0$ such that if we take $\delta \in (0,\tilde{\delta}_0)$, then there exists a unique global classical solution~\eqref{regularity2} of~\eqref{modBurgersApprox} satisfying $m_0 \leq \tilde{u}(t,\cdot) \leq M_0$ and $\|u(t,\cdot) - \tilde{u}(t,\cdot)\|_\infty \leq \smash{\widetilde{M}} \delta \sqrt{t}$ for all $t \geq 0$. Thus, $\tilde u(t,\cdot)$ solves the mild formulation~\eqref{mildform4}. Subtracting~\eqref{mildform4} from~\eqref{mildform2} and differentiating we obtain
\begin{align} \label{mildform5}
\begin{split}
u_x(t,\cdot) - \tilde u_x(t,\cdot) &= \int_0^{t(1-\delta)} \partial_x^2 \re^{\partial_x^2(t-s)} \left(f(u(s,\cdot))  - f(\tilde u(s,\cdot))\right) \de s\\
&\qquad + \, \int_0^{t(1-\delta)} \partial_x^2 \re^{\partial_x^2(t-s)} \left(f(\tilde u(s,\cdot))  -  \tilde f(\tilde u(s,\cdot))\right) \de s\\
&\qquad + \, \int_{t(1-\delta)}^t \partial_x \re^{\partial_x^2(t-s)} \left(f'(u(s,\cdot)) - \tilde{f}'(\tilde u(s,\cdot)) \right)u_x(s,\cdot) \de s \\
&\qquad + \, \int_{t(1-\delta)}^t \partial_x \re^{\partial_x^2(t-s)} \tilde{f}'(\tilde u(s,\cdot))\left(u_x(s,\cdot) - \tilde{u}_x(s,\cdot)\right) \de s,
\end{split}
\end{align}
for all $t \geq 0$. Denote by $L > 0$ the Lipschitz constant of $f$ on $[m_0,M_0]$, and set $K = \sup\{\|u_x(s,\cdot)\|_\infty : 0 \leq s \leq \tau\}$ and $R_1 = \sup\{|f'(v)| : v \in [m_0,M_0]\}$. Thus, taking norms in~\eqref{mildform5}, while using~\eqref{heatbounds}, we arrive at
\begin{align*}
\left\|u_x(t,\cdot) - \tilde u_x(t,\cdot)\right\|_\infty &\leq C\int_{t(1-\delta)}^t \frac{K\left(R + R_1\right)}{\sqrt{t-s}} \de s + C\int_0^t \frac{R\|u_x(s,\cdot) - \tilde u_x(s,\cdot)\|_\infty}{\sqrt{t-s}} \de s \\
&\qquad + \, C  \int_0^{t(1-\delta)} \frac{\delta\left(1 + L\widetilde{M} \sqrt{t}\right)}{t-s} \de s,
\end{align*}
for all $t \in [0,\tau]$. Hence, Gr\"onwall's Lemma~\cite[Lemma~7.0.3]{LUN} yields a constant $M > 0$, independent of $\delta$, such that
\begin{align*}
\|u_x(t,\cdot) - \tilde u_x(t,\cdot)\|_\infty \leq M \sqrt{\delta t},
\end{align*}
for all $t \geq 0$. Thus, taking $\delta_0 < \min\{\tilde{\delta}_0,\varepsilon^2/(M^2 \tau),\varepsilon/(\widetilde{M} \sqrt{\tau})\}$ we establish~\eqref{approxub2}. 
\end{proof}

\begin{proof}[Proof of Lemma~\ref{p:globalcub1}]
We switch to the co-moving frame $\xi = x-ct$, in which equation~\eqref{modBurgers} reads
\begin{align}
w_t = w_{\xi\xi} + cw_\xi + f(w)_\xi. \label{modBurgerscom} 
\end{align}
If $w(t,\cdot)$ is a mild solution of~\eqref{modBurgerscom} with initial condition $w(0,\cdot) = u_0$, then the difference $z = w - \phi$ is a mild solution of
\begin{align}
z_t = \left(z_\xi + cz + f(z + \phi(\xi)) - f(\phi(\xi))\right)_\xi  \label{perteq}
\end{align}
and has initial condition $z_0 = u_0 - \phi \in C_{\mathrm{ub}}^1(\R) \cap L^1(\R)$. The integrated version of equation~\eqref{perteq} reads
\begin{align}
\begin{split}
v_t &= v_{\xi\xi} + cv_\xi + f\left(v_\xi + \phi(\xi)\right) - f(\phi(\xi)), 
\end{split}
\label{pertCauchyint}
\end{align}
where the relevant solution has initial condition $v_0 \in C_{\mathrm{ub}}^2(\R)$ given by
\begin{align*} v_0(\xi) = \int_{-\infty}^\xi z_0(y) \de y. \end{align*}
First, the nonlinearity $N \colon C_{\mathrm{ub}}^1(\R) \to C_{\mathrm{ub}}(\R)$ given by $N(v) = cv_\xi + f(v_\xi + \phi) - f(\phi)$ is well-defined and locally Lipschitz continuous. Second, $\partial_\xi^2$ is a sectorial operator on $C_{\mathrm{ub}}(\R)$ with dense domain $C_{\mathrm{ub}}^2(\R)$. Third, $C_{\mathrm{ub}}^1(\R)$ is an intermediate space of class $J_{1/2}$ between $C_{\mathrm{ub}}(\R)$ and $C_{\mathrm{ub}}^2(\R)$. Therefore, standard analytic semigroup theory, cf.~\cite[Theorem~7.1.2 and Propositions~7.1.10 and~7.2.1]{LUN}, yields a maximal time $T \in (0,\infty]$ and a solution $v \in C\big([0,T),C_{\mathrm{ub}}^2(\R)\big)$ of
\begin{align}
v(t,\cdot) &= \re^{\partial_\xi^2 t} v_0 + \int_0^t \re^{\partial_\xi^2(t-s)}  \left(cv_\xi(s,\cdot) + f(v_\xi(s,\cdot) + \phi) - f(\phi)\right)\de s. \label{mildform0}
\end{align}
Moreover, if $T < \infty$, then we must have $\limsup_{t \to T^-} \|v(t,\cdot)\|_{W^{1,\infty}} = \infty$. Differentiating~\eqref{mildform0} with respect to $\xi$ and setting $z = v_\xi$, we obtain 
\begin{align}
z(t,\cdot) &= \re^{\partial_\xi^2 t} z_0 + \int_0^t \partial_\xi \re^{\partial_\xi^2(t-s)}  \left(cz(s,\cdot) + f(z(s,\cdot) + \phi) - f(\phi)\right)\de s. \label{mildform}
\end{align}
Hence, $z \in C\big([0,T),C_{\mathrm{ub}}^1(\R)\big)$ is a mild solution of~\eqref{perteq} with initial condition $z_0$. Thus, we have 
$$
v_\xi(t,\xi) = z(t,\xi) = w(t,\xi) - \phi = u(t,\xi + ct) - \phi(\xi), 
$$
where $u \in C\big([0,\infty),C_{\mathrm{ub}}(\R)\big)$ is the global mild solution of~\eqref{modBurgerscom}, established in Lemma~\ref{p:globalcub}, satisfying $\|u(t,\cdot)\|_\infty \leq \|u_0\|_\infty$ for $t \geq 0$. So, it holds 
$$
\|v_\xi(t,\cdot)\|_\infty = \|z(t,\cdot)\|_\infty \leq \|u_0\|_\infty + \|\phi\|_\infty
$$ 
for all $t \geq 0$. Taking norms in~\eqref{mildform0} and using~\eqref{heatbounds} we arrive at
\begin{align} \label{blowupineq}
\|v(t,\cdot)\|_\infty \leq C\left(\|v_0\|_\infty + t \sup_{0 \leq s \leq t} \|c z(s,\cdot) + f(z(s,\cdot) + \phi) - f(\phi)\|_\infty\right).
\end{align}  
Clearly, the right-hand side of~\eqref{blowupineq} does not blow up as $t \to T^-$ yielding $T = \infty$. Thus, we have obtained a global solution $u \in C\big([0,\infty),C_{\mathrm{ub}}^1(\R)\big)$ of~\eqref{mildform2}. 

Finally, we establish $L^1$-integrability of $u(t,\cdot) - \phi$ for all $t \geq 0$. Since $\phi$ is bounded and $f$ is locally Lipschitz continuous, we observe that the nonlinearity $G \colon L^1(\R) \cap C_{\mathrm{ub}}(\R) \to L^1(\R) \cap C_{\mathrm{ub}}(\R)$ given by $G(z) = cz + f(z + \phi) - f(\phi)$ is well-defined and locally Lipschitz continuous. On the other hand, $\partial_\xi^2$ is a sectorial operator on $C_{\mathrm{ub}}(\R) \cap L^1(\R)$ and there exists a constant $C > 0$ such that
\begin{align} 
\label{heatbounds2} 
\|\partial_\xi^m \re^{\partial_\xi^2 t} g\|_p \leq Ct^{-\frac{m}2} \|g\|_p, \end{align}
for $p = 1,\infty$, $m = 0,1$, and $g \in L^p(\R)$. Hence, by a standard fixed point argument as in the proofs of~\cite[Theorem~7.1.2 and Proposition~7.2.1]{LUN}, there exist a maximal time $\tau \in (0,\infty]$ and a unique solution $z \in C\big([0,\tau),C_{\mathrm{ub}}(\R) \cap L^1(\R)\big)$ of~\eqref{mildform} such that, if $\tau < \infty$, we have 
$\limsup_{t \to \tau^-} \|z(t,\cdot)\|_{L^1 \cap L^\infty} = \infty$. Let $L > 0$ be the Lipschitz constant of $f$ on $[-\|u_0\|-\|\phi\|_\infty,\|u_0\|_\infty + \|\phi\|_\infty]$. Taking norms in~\eqref{mildform} and using~\eqref{heatbounds2} we arrive at
\begin{align*} 
\|z(t,\cdot)\|_1 \leq C\left(\|z_0\|_1 + \int_0^t \frac{(|c| + L)\|z(s,\cdot)\|_1}{\sqrt{t-s}} \de s\right),
\end{align*}  
for $t \in [0,\tau)$. Hence, Gr\"onwall's Lemma~\cite[Lemma~7.0.3]{LUN} yields a constant $M > 0$, depending only on $C, |c|$, and $L$, such that
\begin{align*} \|z(t,\cdot)\|_1 \leq M \|z_0\|_1,\end{align*}
for all $t \in [0,\tau)$. Combining the latter with $\|z(t,\cdot)\|_\infty \leq \|u_0\|_\infty + \|\phi\|_\infty$ for all $t \in [0,\tau)$ yields $\tau = \infty$. We conclude that $z(t,\cdot) = w(t,\cdot) - \phi = u(t,\cdot + ct) - \phi$, and thus $u(t,\cdot) - \phi$ itself, is $L^1$-integrable for all $t \geq 0$. 
\end{proof}

\bibliographystyle{abbrv}

\bibliography{mybib}

\end{document}